\theoremstyle{plain}
\newtheorem{lem}{Lemma}[section]
\newtheorem{thm}[lem]{Theorem}
\theoremstyle{definition}
\theoremstyle{remark}
\DeclareMathOperator{\Var}{Var}
\DeclareMathOperator{\Ent}{Ent}
\newcommand{\ol}{\overline}
\begin{document}

\title{Sublinear variance in Euclidean first-passage percolation}
\begin{abstract}
The Euclidean first-passage percolation model of Howard and Newman is a rotationally invariant percolation model built on a Poisson point process. It is known that the passage time between 0 and $ne_1$ obeys a diffusive upper bound: $\Var T(0,ne_1) \leq Cn$, and in this paper we improve this inequality to $Cn/\log n$. The methods follow the strategy used for sublinear variance proofs on the lattice, using the Falik-Samorodnitsky inequality and a Bernoulli encoding, but with substantial technical difficulties. To deal with the different setup of the Euclidean model, we represent the passage time as a function of Bernoulli sequences and uniform sequences, and develop several ``greedy lattice animal'' arguments.
\end{abstract}
\author{Megan Bernstein$^1$ \and Michael Damron$^2$ \and Torin Greenwood$^3$
}
\maketitle
\let\thefootnote\relax\footnote{$^1~$College of Computing, Georgia Institute of Technology}
\footnote{$^2~$School of Mathematics, Georgia Institute of Technology}
\footnote{$^3~$Department of Mathematics, Rose-Hulman Institute of Technology}
\section{Introduction}

\subsection{Background and main result}

In \cite{Howard:1997}, Howard and Newman introduced the following Euclidean first-passage percolation (FPP) model on $\mathbb{R}^d$: Let $Q \subset \mathbb{R}^d$ be a rate-one Poisson point process.  For any fixed $\alpha > 1$ and any sequence of points (``path'') in $Q$, $r = (r_0, r_1, \ldots, r_k)$, we define the passage time of the path as:
\[
T(r) := \sum_{i = 1}^k \lVert r_i - r_{i - 1} \rVert^\alpha,
\]
where $\lVert \cdot \rVert$ is the Euclidean norm.  The passage time between two points $q, q' \in Q$ is defined as
\[
T(q, q') := \inf_{r, k: r_0 = q, r_k = q'} T(r).
\]
To define the passage time between non-Poisson points, for $x \in \mathbb{R}^d$, let $q(x)$ be the closest point in $Q$ to $x$ in terms of Euclidean distance, with any fixed rule to break ties. Then we set $T(x, y) := T(q(x), q(y))$ for $x,y \in \mathbb{R}^d$.

Euclidean FPP was introduced with the hope that its rotational invariance would help researchers to overcome some of the problems in traditional (lattice) FPP. This invariance is powerful, and allows one to trivially conclude that the limiting shape for the model is a Euclidean ball. Because of this, Howard and Newman were able to verify many long-standing conjectures about the structure of geodesics for FPP models. In the other direction, though, fluctuation bounds for the passage time are more difficult to establish, due to technical difficulties inherent in the Poisson process underlying the model. To date, the best upper bound for the variance is $\Var T(x,y) \leq C\|x-y\|$ from \cite[Theorem~2.1]{Howard:2001}.

In this paper, we continue the study of fluctuations for the Euclidean model, and aim to prove the following sublinear variance bound:
\begin{thm} \label{MainResult}
Let $d \geq 2$ and $\alpha > 1$.  There is a constant $C > 0$ such that for all $\lVert x - y \rVert \geq 1$, $\Var T(x, y) \leq C \frac{\lVert x - y \rVert}{\log \lVert x - y \rVert}$.
\end{thm}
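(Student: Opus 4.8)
The plan is to run the entropy-method strategy for sublinear variance (Benjamini--Kalai--Schramm; Bena\"im--Rossignol; Damron--Hanson--Sosoe): the Falik--Samorodnitsky inequality together with a logarithmic Sobolev input, but applied to an encoding of the Poisson process $Q$ by independent Bernoulli and uniform random variables, and with the lattice-FPP ``energy'' estimates replaced by greedy lattice animal bounds. Write $n=\|x-y\|$ and $T=T(x,y)$; we may assume $n$ is large, since for bounded $n$ the claim is immediate from $\Var T\le Cn$.

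\emph{Encoding and reduction to a finite function.} Tile $\mathbb{R}^d$ by unit cubes and dyadically refine each into subcubes of side $2^{-\ell}$. Encode the restriction of $Q$ to a unit cube by Bernoulli variables recording, scale by scale, which subcubes contain a point, together with uniform variables giving the positions of those points inside their subcubes; the occupancy probabilities are $\asymp 2^{-d\ell}\to 0$, which keeps the relevant Bernoulli logarithmic Sobolev constants uniformly bounded. Using the geodesic-fluctuation estimates of \cite{Howard:1997,Howard:2001} (exponential tails for transverse wandering, and control of $q(x),q(y)$), we intersect with the event that everything relevant for $T$ lies in a box $B$ of side $O(n)$; off that event we fall back on $\Var T\le Cn$ together with a union bound, and on it $T$ is, up to a harmless truncation of the dyadic depth, a function $f$ of finitely many encoding variables in $B$.

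\emph{Averaging and the Falik--Samorodnitsky step.} Fix $\delta>0$ small and $L=n^{1-\delta}$, let $v$ range over the $m\asymp L^d$ integer translates in a box of side $L$, and set $F=m^{-1}\sum_v T(x+v,y+v)$. Since $\Var\bigl(T(x,y)-T(x+v,y+v)\bigr)\le C\bigl(\Var T(x,x+v)+\Var T(y,y+v)\bigr)\le C\|v\|\le CL$ by the diffusive bound of \cite{Howard:2001}, the triangle inequality in $L^2$ gives $\sqrt{\Var T}\le\sqrt{\Var F}+\sqrt{CL}$, and since $L=o(n/\log n)$ it suffices to prove $\Var F\le Cn/\log n$. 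List all encoding variables in $B$ (Bernoulli and uniform, across all scales) in some order, let $\Delta_j$ be the martingale increments of $F$ for the filtration revealing them one at a time, so $\Var F=\sum_j\mathbb{E}\Delta_j^2$, and apply the Falik--Samorodnitsky inequality:
\[
\Var F\cdot\log\!\left(\frac{\Var F}{\sum_j(\mathbb{E}|\Delta_j|)^2}\right)\ \le\ \sum_j\Ent(\Delta_j^2)\ \le\ C\sum_j\mathbb{E}\bigl[(\nabla_j F)^2\bigr],
\]
the last inequality by the logarithmic Sobolev inequality appropriate to each coordinate (Bernoulli with small parameter, or uniform on a cube), where $\nabla_j$ denotes the resampling difference or the $U_j$-gradient. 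The theorem then reduces to two estimates: \textbf{(i)} $\sum_j\mathbb{E}[(\nabla_j F)^2]\le Cn$, and \textbf{(ii)} $\sum_j(\mathbb{E}|\Delta_j|)^2\le Cn^{1-c}$ for some $c=c(d,\delta)>0$ (a power of $n$ strictly below $1$). Feeding these in gives $\Var F\cdot\log\bigl(\Var F/(Cn^{1-c})\bigr)\le Cn$, and a short bootstrap (if $\Var F\ge Cn/\log n$ then the logarithm is $\gtrsim\log n$) yields $\Var F\le Cn/\log n$, whence $\Var T\le 2\Var F+2CL=O(n/\log n)$.

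\emph{The greedy lattice animal estimates --- the main obstacle.} By $(\nabla_j F)^2\le m^{-1}\sum_v(\nabla_j T_v)^2$ and $\mathbb{E}|\Delta_j|\le\mathbb{E}|\nabla_j F|\le m^{-1}\sum_v\mathbb{E}|\nabla_j T_v|$, both (i) and (ii) reduce --- after Cauchy--Schwarz and translation invariance --- to the single-geodesic statements $\sum_j\mathbb{E}[(\nabla_j T)^2]\le Cn$ and $\mathbb{E}[\#\{\text{cubes relevant for }T\}]\le Cn$. Here resampling the content of a small cube $c_j$ changes $T$ only if $c_j$ is ``relevant'', i.e.\ adjacent to the geodesic or to $q(x),q(y)$; unlike on the lattice, the per-cube effect $\nabla_j T$ is not bounded (emptying a region forces a long step), but under $Q$ it has Gaussian-type tails, so $\mathbb{E}[(\nabla_j T)^2;\,c_j\text{ relevant}]$ is controlled by $\mathbb{P}(c_j\text{ relevant})$. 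The real difficulty is that the geodesic's location is random, so these sums must be dominated uniformly over \emph{all} candidate routes --- which is exactly the province of greedy lattice animal bounds: for i.i.d.-type cube functionals with enough moments (numbers of Poisson points in a cube, local ``relevance'' and ``cost'' functionals --- all with every moment under $Q$, in every dimension) the maximum over connected lattice animals $A$ of $\sum_{c\in A}(\text{functional at }c)$ is $O(|A|)$ in expectation. Since for $\alpha>1$ the Howard--Newman straightness estimates bound the number of Poisson points on the geodesic by $O(n)$, the geodesic together with its neighborhood across all dyadic scales (the fine scales contributing a convergent geometric series) is captured by an animal of size $O(n)$, which yields (i) and (ii). Making all of this precise --- the encoding, the interchange of the path infimum with resampling, the nearest-point boundary terms, the position-variable versions of every estimate, and the animal moment bookkeeping across scales --- is where essentially all of the work lies.
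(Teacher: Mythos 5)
Your overall strategy is the same as ours (Falik--Samorodnitsky plus log-Sobolev inequalities applied to a Bernoulli/uniform encoding of $Q$, an averaged passage time to kill the midpoint problem, and greedy lattice animal bounds for the influence sums), but two of your concrete steps fail as written. The most serious is the averaging reduction. The inequality $\Var\bigl(T(x,y)-T(x+v,y+v)\bigr)\le C\bigl(\Var T(x,x+v)+\Var T(y,y+v)\bigr)$ has no justification: by translation invariance the difference has mean zero, so its variance is its second moment, and the only available pointwise control is the triangle inequality $|T(x,y)-T(x+v,y+v)|\le T(x,x+v)+T(y,y+v)$, whose right side has $L^2$-norm of order $\|v\|$ (dominated by its mean $\approx\mu\|v\|$), not $\sqrt{\|v\|}$. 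Thus the honest bound is $\mathbb{E}\bigl[(T(x,y)-T(x+v,y+v))^2\bigr]\le C\|v\|^2\le CL^2=Cn^{2-2\delta}$, which is not $o(n/\log n)$ for small $\delta$, and your reduction $\Var T\le 2\Var F+o(n/\log n)$ collapses. The fix is to shrink the window to a small power of $n$: with $W=n^\gamma$, $\gamma<1/2$, the $L^2$-comparison $\lVert T-F\rVert_2\le CW$ gives $|\Var T-\Var F|\le CW\sqrt{n}=o(n/\log n)$, and nothing is lost on the influence side, since for $d\ge 2$ the maximal influence is already $\lesssim W\cdot W^{-d}\le n^{-\gamma}$, so $\sum_j(\mathbb{E}|\Delta_j|)^2\le Cn^{1-\gamma}$; this is exactly the choice $W=n^{1/4\alpha}$ made in the paper (Lemmas~\ref{MaxV} and \ref{FtoT}).

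The second problem is the encoding. Occupancy Bernoullis at a fixed dyadic depth together with one uniform per subcube do not reproduce the law of $Q$ (a subcube may contain two or more points), and occupancy indicators across scales are not independent, whereas the tensorization of entropy and the Falik--Samorodnitsky martingale decomposition require an exact independent product representation. Your ``harmless truncation of the dyadic depth'' is therefore a change of model, and you must quantify its effect on the variance --- the analogue of our passage from $T$ to $T'$ to $T''$ via the Howard--Newman estimates (Lemmas~\ref{TDifferences}, \ref{TTails}, \ref{TdoubletoT}), which is not free. Relatedly, the asymmetric-Bernoulli log-Sobolev constant is not uniformly bounded as $p\to 0$ (it grows like $\log(1/p)$); the paper sidesteps both issues by encoding the Poisson count of each unit box through the binary digits of a uniform variable composed with the inverse Poisson CDF, so all coordinates are symmetric Bernoulli or uniform and the representation is exact. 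Finally, be aware that the hardest estimate is the one your sketch treats as routine: when a resampled box \emph{adds} points, the shortening of the geodesic is not controlled by segments of the original geodesic, and one needs the ``empty region near the geodesic'' lattice-animal argument (Lemmas~\ref{TInfinityBound} and \ref{OffSegmentBound}); moreover, the segment-sum and derivative bounds you invoke are available in Howard--Newman for the linearized time $T''$ (with $\phi_n$), not directly for $T$, which is the reason the modified passage time is used throughout.
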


Inequalities for the variance of the form $\|x\|/\log \|x\|$ were first established by Benjamini-Kalai-Schramm \cite{Benjamini:2003} for lattice FPP with Bernoulli edge-weights, and then extended by Bena\"im-Rossignol \cite{Benaim:2008} (along with concentration bounds) to weights in the ``nearly-Gamma'' class. Last, Damron-Hanson-Sosoe \cite{Damron:2014} established such inequalities for general edge-weight distributions. We will follow the strategy of the last two papers, whose main tools are Entropy inequalities and Bernoulli encodings. Whereas in \cite{Damron:2014}, edge-weights were represented in terms of infinite sequences of Bernoulli variables, we will need more variables, representing the Poisson process using both Bernoulli sequences and uniform sequences. We will also need several refinements of so-called ``lattice animal'' inequalities, first used by Howard-Newman, to overcome the difficulties present in the Euclidean model. An outline of the main steps and tools needed appears below in Section~\ref{sec: framework}.

The main motivation for proving a sublinear variance bound in Euclidean FPP comes from estimating the error in the law of large numbers for the passage time. By subadditivity, the ``time constant'' $\mu$ exists and is defined by the formula
\[
\mu = \lim_{n \to \infty} \frac{\mathbb{E}T(0,ne_1)}{n}.
\]
This convergence also holds almost surely, due to the subadditive ergodic theorem, and leads to the heuristic equation $T(0,ne_1) = \mu n + o(n)$. It is customary to break the error term into a random fluctuation and a nonrandom fluctuation term:
\[
T(0,ne_1) - \mu n = (T(0,ne_1) - \mathbb{E}T(0,ne_1)) + (\mathbb{E}T(0,ne_1) - \mu n).
\]
The typical way to express bounds on the random term is by using either a variance upper bound or a concentration inequality. The nonrandom term can be controlled using the random term. Indeed, methods of Howard and Newman (see \cite[Eq.~2.6]{Howard:2001}) show that if one has an inequality of the type
\begin{equation}\label{eq: concentration_need}
\mathbb{P}(|T(0,ne_1) - \mathbb{E}T(0,ne_1)| \geq \lambda \psi(n)) \leq e^{-cn^{\beta'}}
\end{equation}
for constants $\beta,\beta'>0$ and a sufficiently nice function $\psi(n)$ (they use $\psi(n) = \sqrt{n}$), then one can derive the inequality
\[
\mathbb{E}T(0,ne_1)-\mu n \leq C (\log n)^{\beta''} \psi(n),
\]
for yet another $\beta''$. This was improved by Damron-Wang (see \cite[Theorem~2.5]{Damron:2016} and Assumption~2.2 there for conditions on $\psi$) to
\begin{equation}\label{eq: DW}
\mathbb{E}T(0,ne_1) - \mu n \leq C_k (\log^{(k)}n)^{\beta''} \psi(n),
\end{equation}
where $\log^{(k)}$ is the $k$-th iterate of the logarithm, and $C_k$ is a constant depending on $k$. 

One would like to use these tools to prove, as has been done in directed polymers \cite{Alexander:2013}, that the error term $o(n)$ above is actually $o(\sqrt{n})$. To do this, one needs to be able to show \eqref{eq: concentration_need} using $\psi(n) = \sqrt{n/\log n}$. This would effectively bound the random fluctuation term by $o(\sqrt{n})$. Furthermore, applying \eqref{eq: DW} with $k=2$ would give the bound $\sqrt{\frac{n \log \log n}{\log n}}$ for the nonrandom fluctuation term, and this is also $o(\sqrt{n})$. Unfortunately it is not known if \eqref{eq: concentration_need} holds with this choice of $\psi(n)$ (although this is known on the lattice \cite{Benaim:2008, Damron:2014}). Our overall goal is to show this holds in Euclidean FPP, and a first step is to establish our main variance inequality, Theorem~\ref{MainResult}.

\subsection{Proof Framework}\label{sec: framework}
In proving Theorem~\ref{MainResult}, we can immediately reduce to a simpler case: due to statistical isotropy, it suffices to show that $\Var T(0, ne_1) \leq C n/\log n$ for all real $n \geq 1$, where $e_1$ is the unit vector in the first coordinate direction of $\mathbb{R}^d$.

It is crucial for the strategy we present to realize the passage time $T(0,ne_1)$ as a function of independent random variables. To do this, we will break up $\mathbb{R}^d$ into unit boxes, ordered $\mathcal{B}_1, \mathcal{B}_2, \mathcal{B}_3, \ldots$.  In each box $\mathcal{B}_i$, we will define $P_i$ to be a Poisson random variable with parameter $1$ that determines the number of points in the box.  Then, $U_{i, j}$ is a uniformly distributed random variable in $[0, 1)^d$ that determines the location of the $j$th point in the box, if it exists.  Thus, for each box $\mathcal{B}_i$, there is a corresponding outcome space $\Omega_i = \mathbb{N} \times \left(\mathbb{R}^d\right)^\mathbb{N}$ with measure $\mathbb{P}_i$.  If we write an outcome in $\Omega_i$ as $(P_i, U_{i, 1}, U_{i, 2}, U_{i, 3}, \ldots)$, then under the measure $\mathbb{P}_i$, the entries are independent. Finally, for the full space we set $\Omega = \prod_i \Omega_i$, with the probability measure $\mathbb{P} = \prod_i \mathbb{P}_i$.

The proof of Theorem \ref{MainResult} will also require approximations to the distance, $T$, and several probabilistic tools.  These are discussed below.
\subsubsection{Falik-Samorodnitsky inequality}
To begin, we will use the Falik-Samorodnitsky inequality from \cite{Falik:2007}.  Let $\mathcal{F}_i = \sigma\left(\{P_j, U_{j, k}\}: 1 \leq j \leq i, k \geq 1\}\right)$ be the sigma-algebra generated by the variables associated to boxes $\mathcal{B}_1$ through $\mathcal{B}_i$.  Then, for any integrable random variable $Z$ defined on $\Omega$, define
\[
V_i = \mathbb{E}\left[Z | \mathcal{F}_i\right] - \mathbb{E}\left[Z | \mathcal{F}_{i - 1}\right].
\]
Later, we will pick $Z$ to be an averaged version of passage time, $T(0, ne_1)$, but with some modifications. We will denote this as $F_n(0, ne_1)$.  Define the entropy of a non-negative integrable random variable, $X$, as $\Ent(X) = \mathbb{E}\left[X \log(X)\right] - \left[ \mathbb{E} X \right] \cdot \left[\log \mathbb{E}X\right]$. We can use Theorem 2.2 of \cite{Falik:2007} to bound the variance of $Z$ in terms of the entropy of the $V_i$'s:
\begin{thm}[Falik-Samorodnitsky Inequality] \label{FSIneq}
If $\mathbb{E} Z^2 < \infty$, then
\[
\Var Z \cdot \log \left[ \frac{\Var Z}{\sum_i \left(\mathbb{E}|V_i| \right) ^2} \right] \leq \sum_i \Ent V_i^2.
\]
\end{thm}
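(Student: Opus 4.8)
The plan is to reproduce the argument of Falik and Samorodnitsky, which rests on three classical ingredients: the Doob martingale decomposition of $Z$ along the filtration $(\mathcal{F}_i)$, a one-variable entropy lower bound, and the log-sum inequality.

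First I would set up the martingale structure. Since $\mathcal{F}_0$ is trivial and $\bigvee_{i\ge 1}\mathcal{F}_i$ is the full $\sigma$-algebra on $\Omega$, the process $(\mathbb{E}[Z\mid\mathcal{F}_i])_{i\ge 0}$ is an $L^2$-bounded martingale (here $\mathbb{E}Z^2<\infty$ is used), so it converges to $Z$ in $L^2$, and consequently $\sum_{i\ge 1}V_i = Z-\mathbb{E}Z$ in $L^2$. The increments satisfy $\mathbb{E}[V_iV_j]=0$ for $i\ne j$ (the usual orthogonality of martingale differences, since $\mathbb{E}[V_j\mid\mathcal{F}_{j-1}]=0$ and $V_i$ is $\mathcal{F}_{j-1}$-measurable when $i<j$), whence the Pythagorean identity $\Var Z = \sum_i \mathbb{E}[V_i^2]$. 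All quantities in sight are finite: $\mathbb{E}|V_i|\le \mathbb{E}V_i^2{}^{1/2}<\infty$ by conditional Jensen, and if some $\Ent V_i^2=+\infty$ the asserted inequality is trivial; we may also assume $\Var Z>0$, else there is nothing to prove.

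Second — the heart of the matter — is the pointwise-in-$i$ estimate
\[
\Ent V_i^2 \;\ge\; \mathbb{E}[V_i^2]\,\log\!\left(\frac{\mathbb{E}[V_i^2]}{(\mathbb{E}|V_i|)^2}\right).
\]
Since $\Ent V_i^2=\Ent|V_i|^2$, it suffices to prove, for a nonnegative random variable $X$, that $\Ent X^2 \ge \mathbb{E}[X^2]\log\big(\mathbb{E}[X^2]/(\mathbb{E}X)^2\big)$; and since both sides are homogeneous of degree two in $X$, we may normalize $\mathbb{E}[X^2]=1$, so the claim becomes $\mathbb{E}[X^2\log X^2]\ge -2\log\mathbb{E}X$. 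Introducing the probability measure $d\nu=X^2\,d\mathbb{P}$, one has $\mathbb{E}[X^2\log X^2]=\mathbb{E}_\nu[\log X^2]=-2\,\mathbb{E}_\nu[\log(1/X)]$, and Jensen's inequality for the concave function $\log$ under $\nu$ gives $\mathbb{E}_\nu[\log(1/X)]\le\log\mathbb{E}_\nu[1/X]=\log\mathbb{E}X$, which rearranges to exactly this bound.

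Third, I would sum the displayed estimate over $i$ and apply the log-sum inequality with $a_i=\mathbb{E}[V_i^2]$ and $b_i=(\mathbb{E}|V_i|)^2$:
\[
\sum_i \Ent V_i^2 \;\ge\; \sum_i a_i\log\frac{a_i}{b_i} \;\ge\; \Big(\sum_i a_i\Big)\log\frac{\sum_i a_i}{\sum_i b_i},
\]
the second inequality being equivalently the superadditivity of the jointly convex, positively homogeneous perspective function $(a,b)\mapsto a\log(a/b)$. Substituting $\sum_i a_i=\Var Z$ from the first step gives the statement. I do not anticipate a real obstacle: the result is a clean abstract inequality and all three ingredients are standard. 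The only points needing a little care are the $L^2$-convergence and orthogonality bookkeeping for the genuinely infinite martingale decomposition (legitimizing $\Var Z=\sum_i\mathbb{E}V_i^2$), and the degenerate cases $\Var Z=0$ or $\sum_i(\mathbb{E}|V_i|)^2=0$, which are dispatched by inspection.
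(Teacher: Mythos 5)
Your argument is correct: the orthogonal martingale decomposition giving $\Var Z=\sum_i\mathbb{E}[V_i^2]$, the single-increment bound $\Ent V_i^2\ge\mathbb{E}[V_i^2]\log\bigl(\mathbb{E}[V_i^2]/(\mathbb{E}|V_i|)^2\bigr)$ obtained by Jensen under the tilted measure $X^2\,d\mathbb{P}$, and the log-sum inequality combine exactly as you describe, and the degenerate cases pose no problem (note $\sum_i(\mathbb{E}|V_i|)^2\le\sum_i\mathbb{E}[V_i^2]=\Var Z<\infty$ automatically). The paper itself offers no proof — it imports the statement as Theorem 2.2 of Falik--Samorodnitsky — and your argument is essentially the original source's proof, so there is nothing further to compare.
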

Given Theorem~\ref{FSIneq}, the strategy of the proof of Theorem~\ref{MainResult} is as follows:  After defining the averaged modified passage time $Z = F_n$ precisely in Section~\ref{sec: avgT}, we will show that the sum $\sum_i(\mathbb{E}|V_i|)^2$ is of order at most $n^{1 - 1/4\alpha}$ in Section \ref{MartDiff} (see Lemma~\ref{SumVBound}).  Then, we will show that $\sum_i \Ent V_i^2$ is at most linear in $n$ in Section \ref{Entropy} (see Lemma~\ref{EntropyBoundGoal}).  From this, we will be able to conclude in Section~\ref{sec: MainProof} that $\Var Z$ is bounded by $Cn/\log n$. It then only remains to show that $\Var T(0,ne_1) \leq \Var F_n  + o(n/\log n)$ (see Lemmas~\ref{FtoT} and \ref{TdoubletoT}) to complete the proof.
\subsubsection{Approximating $T$}
It will be convenient to have an approximation to the passage time $T$ with nice properties.  Following \cite{Howard:2001}, we introduce a modified passage time $T''$, dependent on $n$.  This passage time is first defined on a subset of points $Q_n \subset Q$, defined as follows: divide $\mathbb{R}^d$ into boxes with vertices (corners) at the points $\epsilon/3^n\cdot\left( \mathbb{Z}^d + (1/2, \ldots, 1/2)^d\right)$ for some small $\epsilon > 0$ of the form $\epsilon = 1/k$ for an odd integer, $k$. Choosing $\epsilon$ of this form ensures that the $\epsilon/3^n$ boxes nest evenly within unit boxes.  Further restrictions on $\epsilon$ come from \cite{Howard:2001}, and are described below in Equation \eqref{MEpsilonBound} in the proof of Lemma \ref{OffSegmentBound}, as well as in the statement of Lemma \ref{TDifferences}. Then, $Q_n$ is defined by removing all but the left-most particle of $Q$ in each $\epsilon/3^n$-box (if $Q$ has a particle in the box).  

Next, we define the passage time $T''$ between points in $Q_n$.  This passage time uses a new distance, $\phi_n$, between points in a path:
\[
\phi_n(t) = \begin{cases} 
                t^\alpha, & \text{ if } t \leq h_n \\
		h_n^\alpha + \alpha h_n^{\alpha-1}(t - h_n), & \text{ otherwise.} 
               \end{cases}
\]
Here, $h_0 \geq 1$ and $h_1 \geq h_0$ are large constants with restrictions from \cite{Howard:2001} repeated here before Equation \eqref{SBreakdown} in the proof of Lemma \ref{OffSegmentBound}, and with other restrictions from the statements of Lemmas \ref{E-Regions} and \ref{HN:Regions}.  For all other $n > 0$, $h_n = \max\left(h_0, h_1n^{1/2\alpha}\right)$.  Then, the $T''$-passage time for a path $r = (r_1, r_2, \ldots, r_k)$ of points in $Q_n$ is defined to be:
\[
T''(r) = \sum_{i = 1}^{k - 1} \phi_n(\lVert r_{i + 1} - r_i \rVert).
\]
One advantage of this new distance function is that $\phi_n(\ell)$ grows linearly in $\ell$ for $\ell$ large enough, instead of growing like $\ell^\alpha$. Also $\phi_n$ satisfies an inequality that behaves similarly to a triangle inequality, as reproduced in Lemma \ref{PhiIneq} below.

Finally, we extend the definiton of $T''(a, b)$ to any $a, b \in \mathbb{R}^d$.  Let $\mathfrak{R} = \{r = (r_1, r_2, \ldots, r_{k - 1}): k \geq 1, r_i \in Q_n \mbox{ for all } 1 \leq i \leq k - 1\}$, and define $r_0 = a$ and $r_k = b$.  Then,
\[
T''(a, b) := \inf_{r \in \mathfrak{R}} \sum_{i = 1}^k \phi_n(\lVert r_i - r_{i - 1} \rVert).
\]
In other words, $T''(a, b)$ artificially adds the points $a$ and $b$ into $Q_n$, and then finds the $Q_n$-path between $a$ and $b$ with minimal total length defined by $\phi_n$.

There is some overhead in showing that $T''$ is close enough to $T$ that the variance of $T''$ and $T$ are also close.  This is described in Lemma \ref{TdoubletoT} below, and requires some properties about $T''$ and $T$ from \cite{Howard:2001}.  The properties are summarized in Section \ref{Appendix} below, along with other cited results.
\subsubsection{Averaging $T''$}\label{sec: avgT}
Starting from the Falik-Samorodnitsky inequality, we will need to find a sublinear bound on $\sum_i \mathbb{E} |V_i|^2$, where the $V_i$ are the martingale differences defined above. Unfortunately, it is not known how to prove such an inequality when $Z = T$, or even when $Z=T''$. The problem is that the terms $\mathbb{E} |V_i|^2$ measure a sort of ``influence'' of the variables associated with the box $\mathcal{B}_i$ on $Z$: roughly how much $Z$ changes when these variables are resampled. This influence is not uniform in the location of the box. For instance, if $\mathcal{B}_i$ is close to $0$ or $ne_1$, then the influence will be high, and for other boxes, it is expected to be vanishingly small. This problem occurred in the original sublinear variance proof on the lattice, in which the authors of \cite{Benjamini:2003} noted that they could not even show that the influence of an edge near $(n/2)e_1$ goes to 0 with $n$. This motivated the ``midpoint problem'': show that as $n \to \infty$, the probability that a geodesic from $0$ to $ne_1$ passes through $(n/2)e_1$ goes to 0.

To circumvent the midpoint problem, an averaged passage time was introduced in \cite{Benjamini:2003} and used later in \cite{Benaim:2008}. Their key insight is that if the lattice were replaced by a discrete torus, and the passage time $T$ by a translation invariance ``diameter'' variable, then one could easily show that influences are at most $o(n^{-\epsilon})$. A simpler averaging was later introduced in \cite{Alexander:2013}, and it is this version that we adopt in this paper, applied to the modified time $T''$. Let $\Gamma_n = \left\{z \in \mathbb{Z}^d : \lVert z \rVert_\infty \leq n^{\frac{1}{4\alpha}} \right\}$, where $\alpha$ is the constant from the definition of $T, T'',$ and $\phi_n$.  Any power of $n$ strictly between $0$ and $1/2$ in this definition is compatible with our proof, but choosing $\frac{1}{4\alpha}$ reduces some of our work below in Equation \eqref{DiameterChoice} from the proof Lemma \ref{FtoT}. Let $|\Gamma_n|$ be the number of elements of $\Gamma_n$.  Then, define:
\[
F_n(0, ne_1) := \frac{1}{|\Gamma_n|} \sum_{z \in \Gamma_n} T''(z, z + ne_1).
\] 
This function $F_n$ is the random variable $Z$ that we will substitute into the Falik-Samorodnitsky inequality, Theorem \ref{FSIneq}, as well as in the definition of $V_i$.  The advantage of $F_n$ over $T''$ alone will become apparent in the proof of Lemma \ref{MaxV}.  But, using the averaged version will add a layer of technicalities elsewhere: the proof of Lemma \ref{MaxV} relies on bounds on the distribution of the maximum number of unit boxes touched by a geodesic connecting any two points within $\Gamma_n$ (see Equation \eqref{Gamma-BoxBound}).  We obtain such a bound by using Lemma \ref{BoxBound}.
\subsubsection{Logarithmic Sobolev inequalities}
In order to bound $\sum \Ent V_i^2$ in the Falik-Samorodnitsky inequality, we will use logarithmic Sobolev inequalities to bound entropies by derivatives. Ideally, we would use a logarithmic Sobolev inequality for the uniform random variables encoding the location of the Poisson points in $Q$, and another inequality for the Poisson random variables encoding the number of points in $Q$.  Unfortunately, no such inequality exists for Poisson random variables.  (See the discussion before Equation (5.4) in \cite{Ledoux:1999}.)  To overcome this, we instead encode the Poisson number of points in each box $\mathcal{B}$ in binary as a sequence of Bernoulli random variables.  Then, we can bound the entropies by sums of derivatives with respect to the Bernoulli and uniform random variables corresponding to the number of points and location of the points in each box, $\mathcal{B}$.  This part of the argument appears in Section \ref{Entropy}.
\subsubsection{Greedy lattice animals}
Throughout the paper, we will need to find bounds on expectations of objects like the number of boxes a geodesic passes through, the number of Poisson points in boxes near geodesics, and the sums of lengths of segments on the geodesic.  One way to approach bounds like these is to find bounds on the maximum for any path, regardless of whether it is a geodesic.  More precisely, these variables are of the form $f(P)$, where $P$ is a geodesic, and we often bound them by $\max_P f(P)$, where the maximum is over a class of paths which contains the geodesic. This leads to ``greedy lattice animal'' arguments, like those found in \cite{Cox:1993}, \cite{Gandolfi:1994}, and \cite{Dembo:2001}. Several of the greedy lattice animal bounds have similar proof structures and are useful throughout the paper.  So, these bounds are compiled in Section \ref{GreedyLattice}.

One of the more challenging parts of the proof of Theorem \ref{MainResult} will be bounding the change in the length of a geodesic when points are added to $Q_n$, which is needed to bound the derivatives of $T''$ with respect to the Bernoulli random variables described above.  This is in contrast to removing points in $Q_n$, where the change in geodesic length can be bounded in terms of segments of the geodesic.  In order to bound this difference, we will need to generalize the proof of Equation (3.10) in \cite{Howard:2001} which relied on greedy lattice animals.  We will argue that the only way a geodesic can be substantially changed by adding points is if there is a large region near the geodesic that has no Poisson points.  For further details, see Lemma \ref{OffSegmentBound} and the discussion preceding it.

Throughout our proof, we will make use of many large and small constants.  Where possible, we use capital letters for large constants (like $C_1$) and lower case letters for small ones (like $c_2$).  Constants are not the same in different proofs unless specified.  \\ \ 

\hrule
\section{Bound on $\sum_i \left(\mathbb{E}|V_i| \right) ^2$} \label{MartDiff}
Our goal in this section is to show the following bound:
\begin{lem} \label{SumVBound}
There exists a constant $C > 0$ such that for all $n \geq 1$,
 \[
 \sum_i (\mathbb{E}|V_i|)^2 \leq C n^{1 - \frac{1}{4\alpha}}.
 \]
\end{lem}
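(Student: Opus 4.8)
The plan is to run the ``$\max\times\text{sum}$'' scheme familiar from the lattice proofs, i.e. to use
\[
\sum_i (\mathbb{E}|V_i|)^2 \le \Bigl(\max_i \mathbb{E}|V_i|\Bigr)\cdot\Bigl(\sum_i \mathbb{E}|V_i|\Bigr),
\]
and to show the two factors are at most $Cn^{-1/4\alpha}$ and $Cn$ respectively. The starting point is the standard Doob-martingale resampling inequality $\mathbb{E}|V_i| \le \mathbb{E}\bigl|F_n - F_n^{(i)}\bigr|$, where $F_n^{(i)}$ is $F_n$ recomputed after the variables $(P_i,U_{i,1},U_{i,2},\dots)$ of box $\mathcal{B}_i$ are replaced by an independent copy; this follows from the tower property and Jensen's inequality. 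Since $F_n = |\Gamma_n|^{-1}\sum_{z\in\Gamma_n}T''(z,z+ne_1)$, the triangle inequality gives
\[
\mathbb{E}|V_i| \le \frac{1}{|\Gamma_n|}\sum_{z\in\Gamma_n}\mathbb{E}\bigl|T''(z,z+ne_1)-(T'')^{(i)}(z,z+ne_1)\bigr|,
\]
where $(T'')^{(i)}$ denotes $T''$ computed in the configuration with box $\mathcal{B}_i$ resampled, so everything reduces to controlling a single resampled difference and then summing and maximizing cleverly.

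The key estimate is that a single difference $\bigl|T''(z,z+ne_1)-(T'')^{(i)}(z,z+ne_1)\bigr|$ is at most a constant times a \emph{local} quantity $L_i(z)$ attached to box $\mathcal{B}_i$. Writing the resampling as ``delete the old points of $\mathcal{B}_i$, then insert the new points'', the deletion step raises $T''$ by at most a constant times the $\phi_n$-length of the old geodesic's segments through a bounded enlargement of $\mathcal{B}_i$ (reroute the old geodesic around the deleted points); this is the easy direction, essentially as in \cite{Howard:2001}. The insertion step is the hard direction: one must bound how much adding points can \emph{shorten} the geodesic, which is exactly the content of Lemma~\ref{OffSegmentBound} (the generalization of \cite[Eq.~(3.10)]{Howard:2001}), whose upshot is that the decrease is again controlled by a local $\phi_n$-length near $\mathcal{B}_i$ unless there is an atypically large Poisson-free region near the geodesic, an event of stretched-exponentially small probability. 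Thus, off a rare bad event, $L_i(z)$ is the $\phi_n$-length of the old and new geodesics for the pair $(z,z+ne_1)$ inside a bounded enlargement of $\mathcal{B}_i$.

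Now I invoke translation covariance. The law $\mathbb{P}$ is invariant under shifts by vectors in $\mathbb{Z}^d\supseteq\Gamma_n$, and the encoding $(P_i,U_{i,j})$, the thinned process $Q_n$, and the time $T''$ are all covariant under such shifts (the $\epsilon/3^n$-grid nests inside unit boxes, so an integer shift merely permutes boxes). Hence $\mathbb{E}[L_i(z)] = \mathbb{E}[\widetilde{L}_{i-z}]$, where $\widetilde{L}_j$ is the corresponding local length of the \emph{single} geodesic $\mathrm{geod}(0,ne_1)$ near box $\mathcal{B}_j$, so
\[
\mathbb{E}|V_i| \le \frac{C}{|\Gamma_n|}\,\mathbb{E}\Bigl[\sum_{z\in\Gamma_n}\widetilde{L}_{i-z}\Bigr].
\]
Up to a bounded multiplicity, $\sum_{z\in\Gamma_n}\widetilde{L}_{i-z}$ is the $\phi_n$-length of $\mathrm{geod}(0,ne_1)$ inside a cube of side $\asymp n^{1/4\alpha}$ (a translate of $\Gamma_n$); since $\mathrm{geod}(0,ne_1)$ enters and leaves such a cube in geodesic excursions between points at mutual distance $O(n^{1/4\alpha})$, this is at most $Cn^{1/4\alpha}$ off an event negligible even after a union bound over the $\mathrm{poly}(n)$ relevant box positions. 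This uniform bound is precisely the greedy-lattice-animal estimate \eqref{Gamma-BoxBound} obtained from Lemma~\ref{BoxBound}, and it is what is packaged in Lemma~\ref{MaxV}; it gives $\max_i\mathbb{E}|V_i|\le Cn^{(1-d)/4\alpha}\le Cn^{-1/4\alpha}$ for $d\ge 2$. For the other factor I sum $\bigl|T''(z,z+ne_1)-(T'')^{(i)}(z,z+ne_1)\bigr|\le CL_i(z)$ over all boxes $\mathcal{B}_i$: for each fixed $z$ the deletion terms sum to $O\!\bigl(T''(z,z+ne_1)\bigr)$, with $\mathbb{E}\,T''(z,z+ne_1)\le Cn$, and the insertion terms are controlled by the rarity of large Poisson-free regions together with a greedy-animal bound on the number of unit boxes met by $\mathrm{geod}(z,z+ne_1)$, so $\sum_i\mathbb{E}|V_i|\le Cn$. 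Multiplying the two bounds yields $\sum_i(\mathbb{E}|V_i|)^2\le Cn^{1-1/4\alpha}$, which is Lemma~\ref{SumVBound}.

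I expect the main obstacle to be the insertion estimate, Lemma~\ref{OffSegmentBound}: bounding the decrease of $T''$ under \emph{adding} Poisson points is genuinely delicate, because the new geodesic can depart substantially from the old one, and one needs exactly the right greedy-lattice-animal argument to show that any large improvement forces a correspondingly large empty region near the geodesic. A secondary technical point is making the translation-covariance reduction watertight given the $n$-dependent definitions of $Q_n$ and $\phi_n$, and ensuring the greedy-animal tail bounds are strong enough to control the maximum over all $\mathrm{poly}(n)$ relevant box positions without degrading the $n^{1/4\alpha}$ rate.
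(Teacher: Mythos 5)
Your skeleton is the same as the paper's: the bound $\sum_i(\mathbb{E}|V_i|)^2\le(\max_i\mathbb{E}|V_i|)(\sum_i\mathbb{E}|V_i|)$, the resampling/Jensen step $\mathbb{E}|V_i|\le\mathbb{E}|F_n-\tilde F_{n,i}|$ as in \eqref{eq: v_i_equiv}, the reindexing by translation invariance so that the $\Gamma_n$-average becomes a sum over boxes $\mathcal{B}_i+\Gamma_n$ seen by the single geodesic from $0$ to $ne_1$, the cube-scale estimate of order $n^{1/4\alpha}$ (via the tails of Lemmas \ref{TprimeTails}, \ref{BoxBound} and \ref{MaxLengthBound}, with a union bound over box-center pairs), division by $|\Gamma_n|\asymp n^{d/4\alpha}$ with $d\ge2$, and the linear bound on $\sum_i\mathbb{E}|V_i|$; these are exactly Lemmas \ref{lin} and \ref{MaxV}.

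The one substantive divergence is your treatment of the insertion direction, and it is where your write-up has a soft spot. The paper never needs to bound how much resampling can \emph{shorten} $T''$ in this section: since $(T'',\tilde T_i'')$ is an exchangeable pair, $\mathbb{E}|\tilde T_i''-T''|=2\,\mathbb{E}\bigl[(\tilde T_i''-T'')\mathbbm{1}_{\{\tilde T_i''\ge T''\}}\bigr]$, and on that event the increase is dominated by the delete-all-points time $T''_{\mathcal{B}_i,\mathbf{0}}$, so everything reduces to the deletion estimate (Lemma \ref{TZeroBound} for the sum; the analogous local deletion bound plus the tail lemmas for the max). Lemma \ref{OffSegmentBound} is genuinely needed only later, in the entropy bound (Lemma \ref{BernoulliBound} via Lemma \ref{TInfinityBound}), where the squared differences $(T''_{\mathcal{B},\infty}-T''_{\mathcal{B},\mathbf{0}})^2$ prevent this symmetrization. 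If you insist on handling insertion directly, note two issues: the decrease is naturally controlled by segments of the \emph{resampled} geodesic adjacent to $\mathcal{B}_i$, so to get a quantity measurable with respect to the original configuration you must either pass through the $T''_{\mathcal{B},\infty}$/$L^\infty_{\mathcal{B}}$ machinery of Lemma \ref{TInfinityBound} or invoke equality in law of the resampled environment (at which point you have rederived the symmetrization trick and Lemma \ref{OffSegmentBound} is superfluous); and for the max bound you would need a cube-localized, scale-$n^{1/4\alpha}$ version of the off-segment estimate, which is not what Lemma \ref{OffSegmentBound} states (it is a scale-$n$ bound for the full geodesic from $0$ to $ne_1$). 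Your implicit use of ``the resampled configuration has the same law'' in the translation-covariance step does rescue the argument, so the proposal is workable, but the identification of Lemma \ref{OffSegmentBound} as the crux of this particular lemma is a misdiagnosis; the paper's route is strictly lighter here.
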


In order to prove this, we will show (Lemma~\ref{lin}) that the the sum $\sum_i \mathbb{E} |V_i|$ will grow at most linearly in $n$, while (Lemma~\ref{MaxV}) the maximum of the $\mathbb{E}|V_i|$'s will decay at least as fast as $n^{-\frac{1}{4\alpha}}$.

\begin{lem} \label{lin}
There exists a constant $C$ so that for all $n \geq 1$,
\[ \sum_i \mathbb{E}|V_i| \leq Cn.\]
\end{lem}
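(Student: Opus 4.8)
The plan is to pass from the martingale differences $V_i$ to a resampling difference, reduce from $F_n$ to the modified time $T''$, and then reduce further to the effect of merely \emph{deleting} the Poisson points of a single box $\mathcal B_i$ — the easy direction for $T''$ — after which the sum over $i$ is controlled by the $\phi_n$-mass of a geodesic plus a greedy lattice animal error.

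\emph{Reductions.} Fix $i$ and let $\omega_i'$ be an independent copy of the configuration $(P_i,U_{i,1},U_{i,2},\dots)$ in $\mathcal B_i$; write $F_n^{(i)}$ and ${T''}^{(i)}$ for $F_n$ and $T''$ computed after replacing $\omega_i$ by $\omega_i'$. Resampling $\mathcal B_i$ and then averaging out the later boxes reproduces conditioning on $\mathcal F_{i-1}$, i.e. $\mathbb E[F_n^{(i)}\mid\mathcal F_i]=\mathbb E[F_n\mid\mathcal F_{i-1}]$, so $V_i=\mathbb E[F_n-F_n^{(i)}\mid\mathcal F_i]$ and Jensen's inequality gives $\mathbb E|V_i|\le\mathbb E|F_n-F_n^{(i)}|\le \frac1{|\Gamma_n|}\sum_{z\in\Gamma_n}\mathbb E\bigl|T''(z,z+ne_1)-{T''}^{(i)}(z,z+ne_1)\bigr|$. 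Since $\epsilon=1/k$ with $k$ odd, $\epsilon/3^n=1/(k3^n)$ with $k3^n\in\mathbb Z$, so the $\epsilon/3^n$-grid, the unit boxes, and hence the whole construction of $T''$ are invariant under $\mathbb Z^d$-translations; as $\Gamma_n\subset\mathbb Z^d$, translating by $-z$ is a measure-preserving bijection that only relabels the boxes and carries ``resample $\mathcal B_i$'' to ``resample a translate of $\mathcal B_i$''. Summing over $i$ and $z$ then gives $\sum_i\mathbb E|V_i|\le\sum_i\mathbb E\bigl|T''(0,ne_1)-{T''}^{(i)}(0,ne_1)\bigr|$. Now let $D_i$ be $T''(0,ne_1)$ computed after deleting every point of $Q_n$ in $\mathcal B_i$ (so $D_i$ does not depend on $\omega_i$). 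Deleting points can only raise $T''$ and reinserting any points can only lower it, so $0\le D_i-T''(0,ne_1)$ and $0\le D_i-{T''}^{(i)}(0,ne_1)$, whence $|T''(0,ne_1)-{T''}^{(i)}(0,ne_1)|\le(D_i-T''(0,ne_1))+(D_i-{T''}^{(i)}(0,ne_1))$; since $(\omega_j)_{j\neq i}$ together with $\omega_i'$ has the same law as $\omega$, both terms have expectation $\mathbb E(D_i-T''(0,ne_1))$, and it suffices to prove $\sum_i\mathbb E\bigl(D_i-T''(0,ne_1)\bigr)\le Cn$.

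\emph{Bounding the deletion effect and summing.} Fix a configuration and a $T''$-geodesic $\Gamma$ from $0$ to $ne_1$. To bound $D_i-T''(0,ne_1)$ I would turn $\Gamma$ into a $Q_n$-path avoiding $\mathcal B_i$: along each maximal excursion of $\Gamma$ through $\mathcal B_i$, replace the points lying in $\mathcal B_i$ by a short detour through points of $Q_n$ in boxes neighbouring $\mathcal B_i$, chosen to shadow the excursion. Using the quasi-triangle inequality for $\phi_n$ (Lemma~\ref{PhiIneq}), the fact that $Q$ — hence $Q_n$ — has only $O(1)$ points in any bounded region, and the fact that $\phi_n$ is linear on long edges (since $h_n=\max(h_0,h_1 n^{1/2\alpha})$), the $\phi_n$-cost of the detour is at most a fixed multiple of the $\phi_n$-mass $\Phi_i$ of the edges of $\Gamma$ meeting a box adjacent to $\mathcal B_i$, plus an additive error $E_i$ that is nonzero only when some box within bounded distance of $\mathcal B_i$ (including the $O(1)$ boxes near $0$ or $ne_1$) contains no point of $Q_n$, in which case the excursion length itself is controlled via the estimates of Section~\ref{GreedyLattice}. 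Hence $D_i-T''(0,ne_1)\le C\Phi_i+E_i$. Each edge of $\Gamma$ meets only $O(1)$ unit boxes, so $\sum_i\Phi_i\le C\,T''(0,ne_1)$, giving $\sum_i(D_i-T''(0,ne_1))\le C\,T''(0,ne_1)+\sum_iE_i$. Taking expectations, $\mathbb E\,T''(0,ne_1)\le Cn$ by the bounds on $T''$ collected in Section~\ref{Appendix} (following \cite{Howard:2001}), and $\mathbb E\sum_iE_i\le Cn$ because $E_i$ is nonzero only for boxes near $\Gamma$ with an empty neighbourhood — an event of exponentially small probability — while $\Gamma$ meets $O(n)$ boxes in expectation, precisely the greedy lattice animal bound of Section~\ref{GreedyLattice}.

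The main obstacle is the rerouting step. Since $\phi_n(t)=t^\alpha$ with $\alpha>1$ on short edges, $\phi_n$ is \emph{super}additive there, so one cannot simply excise the portion of $\Gamma$ inside $\mathcal B_i$ and join its endpoints by a single edge: that could inflate the cost by a factor growing with the number of excised points. The detour must instead be built locally, shadowing $\Gamma$ through nearby $Q_n$-points, and showing that enough such points are present — and quantifying the rare failures — is exactly where the lattice animal machinery and the particular form of $\phi_n$ enter.
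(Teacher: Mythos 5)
Your reductions are exactly the paper's: Jensen to pass from $V_i$ to the resampled difference, translation invariance over $\Gamma_n$ to reduce to $z=0$, and then exchangeability plus monotonicity of $T''$ under adding points to replace the resampling difference by twice the deletion difference $\mathbb{E}\bigl(T''_{\mathcal{B}_i,\mathbf{0}}(0,ne_1)-T''(0,ne_1)\bigr)$. At that point the paper simply invokes Lemma~\ref{TZeroBound} with $p=1$, and this is where your proposal has a genuine gap. The obstacle you single out is not an obstacle: one \emph{can} excise the stretch of the geodesic between the point $r_{\mathcal{B}}^-$ preceding its first used point $s_{\mathcal{B}}^-$ in $\mathcal{B}_i$ and the point $r_{\mathcal{B}}^+$ following its last used point $s_{\mathcal{B}}^+$, and join $r_{\mathcal{B}}^-$ to $r_{\mathcal{B}}^+$ by a single edge. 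Superadditivity of $\phi_n$ is irrelevant because you never need to compare the new edge against the excised edges; you only need its own cost, and by two applications of the quasi-triangle inequality (Lemma~\ref{PhiIneq}) through $s_{\mathcal{B}}^-$ and $s_{\mathcal{B}}^+$, $\phi_n(\lVert r_{\mathcal{B}}^--r_{\mathcal{B}}^+\rVert)\le 2^{2\alpha}\bigl[\phi_n(\lVert r_{\mathcal{B}}^--s_{\mathcal{B}}^-\rVert)+\phi_n(\lVert s_{\mathcal{B}}^--s_{\mathcal{B}}^+\rVert)+\phi_n(\lVert s_{\mathcal{B}}^+-r_{\mathcal{B}}^+\rVert)\bigr]$, where the middle term is at most $d^{\alpha/2}$ since $s_{\mathcal{B}}^{\pm}$ lie in the same unit box, and the outer terms are single geodesic segments, each appearing for at most two boxes. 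Summing over $\mathcal{B}$ then reduces to $\mathbb{E}\sum_k L_k^{\alpha}$ and $\mathbb{E}\#\mathcal{C}$, which are linear in $n$ by Lemmas~\ref{SegmentBound} and \ref{BoxBound}. Your detour-through-neighbouring-boxes construction, with its empty-box failure events $E_i$, is the sort of machinery the paper needs only for the genuinely harder \emph{addition} of points (Lemmas~\ref{TInfinityBound} and \ref{OffSegmentBound}); it is not needed for deletion.

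Moreover, the route you sketch does not close as stated. The claim that each edge of the geodesic meets only $O(1)$ unit boxes is false: a segment of Euclidean length $L$ can meet of order $L$ unit boxes, so its $\phi_n$-cost is counted order $L$ times in $\sum_i\Phi_i$, and the asserted bound $\sum_i\Phi_i\le C\,T''(0,ne_1)$ does not follow. To repair it you would need moment bounds on $\sum_k L_k^{p}$ for a higher power $p$ (i.e.\ Lemma~\ref{SegmentBound} again), at which point you have returned to the paper's argument with extra steps; and the error terms $E_i$, together with the requirement that enough $Q_n$-points exist near $\mathcal{B}_i$ to shadow the excursion, are left essentially uncontrolled in your sketch.
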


\begin{proof}
Recall that $\mathbb{E}|V_i| := \mathbb{E}\big[ |\mathbb{E}[F_n | \mathcal{F}_i] - \mathbb{E}[F_n | \mathcal{F}_{i - 1}]|\big]$, where $F_n := F_n(0, ne_1)$.  Let $\tilde F_{n, i}$ represent the value of $F_n$ when the Poisson and uniform random variables corresponding to box $\mathcal{B}_i$ are resampled.  That is, if $F_n$ is a function of the environment $((P_1, U_{1,1}, U_{1,2}, \dots), (P_2, U_{2,1}, U_{2,2}, \dots), \dots)$, then $\tilde F_{n,i}$ takes the value of $F_n$ evaluated in the environment in which $(P_i, U_{i,1}, U_{i,2}, \dots)$ is replaced by an independent copy $(P_i', U_{i,1}', U_{i,2}', \dots)$. Then, $\mathbb{E}|V_i| \leq \mathbb{E}|\tilde F_{n, i} - F_n|$ by Jensen's inequality.  Indeed, writing $X_1$ for the Poisson configuration in boxes preceding $\mathcal{B}_i$, $X_2$ for the configuration inside $\mathcal{B}_i$, and $X_3$ for the configuration in boxes after $\mathcal{B}_i$, with $\mu_{X_i}$ their respective distributions,
\begin{align}
&\mathbb{E}|V_i| \nonumber \\
=~& \mathbb{E}\left| \int F_n(x_1, x_2, x_3)~\text{d}\mu_{X_3}(x_3) - \int F_n(x_1,x_2,x_3)~\text{d}\mu_{X_2}(x_2)\text{d}\mu_{X_3}(x_3)\right| \nonumber \\
=~& \int\int\left| \int \int \left(F_n(x_1,x_2',x_3)-F_n(x_1,x_2,x_3)\right)~\text{d}\mu_{X_2}(x_2)\text{d}\mu_{X_3}(x_3) \right|~\text{d}\mu_{X_2}(x_2')~\text{d}\mu_{X_1}(x_1) \nonumber \\
\leq~& \int\int\int\int \left| F_n(x_1,x_2',x_3)-F_n(x_1,x_2,x_3)\right|  ~\text{d}\mu_{X_1}(x_1)\text{d}\mu_{X_2}(x_2)\text{d}\mu_{X_2}(x_2')\text{d}\mu_{X_3}(x_3) \nonumber \\
=~& \mathbb{E}|\tilde F_{n,i} - F_n|. \label{eq: v_i_equiv}
\end{align}
We will also let $\tilde T_i''$ represent $T''$ after the points in box $\mathcal{B}_i$ are resampled, and we let $\mathbbm{1}_{\{\tilde T_i'' \geq T''\}}(z)$ be the indicator function of the event that $\tilde T_i''(z,z+ne_1) \geq T''(z,z+ne_1)$.  Then,
\begin{align}
\sum_i \mathbb{E} |V_i| &\leq \sum_i \mathbb{E} |\tilde F_{n, i} - F_n|\nonumber \\
&\leq \frac{1}{|\Gamma_n|} \sum_i \mathbb{E} \left[ \sum_{z \in \Gamma_n} \big|\tilde T_i''(z, z + ne_1) - T''(z, z + ne_1) \big| \right] \nonumber \\
&\leq \frac{2}{|\Gamma_n|} \sum_{z \in \Gamma_n} \sum_i \mathbb{E} \left| \left(\tilde T_i''(z, z + ne_1) - T''(z, z + ne_1)\right) \mathbbm{1}_{\{\tilde T_i'' \geq T''\}}(z) \right|. \label{SumValue}
\end{align}
Now, when $\tilde T_i'' \geq T''$, the difference between $\tilde T_i''$ and $T''$ can be bounded by considering the case where all points from $\mathcal{B}_i$ are removed.  Let $T_{\mathcal{B}, \mathbf{0}}''(z)$ be the $T''$-passage time from $z$ to $z + ne_1$ when paths are forbidden from using points in the box $\mathcal{B}$.  Also, let $\mathbbm{1}_{\{\mathcal{B} \mbox{\scriptsize\ used}\}}(z)$ be the indicator function of the event that the $T''$-geodesic from $z$ to $z + ne_1$ uses a point from the box $\mathcal{B}$.  Picking up with Equation \eqref{SumValue}, the right side is bounded above by:
\[
\frac{2}{|\Gamma_n|} \sum_{z \in \Gamma_n} \sum_i \mathbb{E} \left( T_{\mathcal{B}_i, \mathbf{0}}''(z) - T''(z, z + ne_1) \right)
= 2 \sum_{\mathcal{B}} \mathbb{E}\left[ (T''_{\mathcal{B},\mathbf{0}}(0) - T''(0,ne_1)) \mathbbm{1}_{\{\mathcal{B} \text{ used}\}}(0)\right].
\]
Now, most of the work for this proof is summarized in Lemma \ref{TZeroBound}, which will be used elsewhere as well, with $p = 1$.  With this, we can bound the last expression by $2Cn$.
\end{proof}

Now that we have bounded the expectation of the sum, $\sum_i |V_i|$, we turn to bounding the maximum expectation of the $|V_i|$'s.  The proof below relies on the fact that we are using $F_n$, the averaged passage time, instead of $T''$: without the averaged passage time, we could bound the expectation of each $|V_i|$ by a constant.  But, with the averaged passage time, we can use translation invariance to bound these expectations roughly by the expected length of the $T''$-geodesic as it passes through a box of size $\Gamma_n$, divided by $|\Gamma_n|$, as seen in Equation \eqref{PartialAvgBoxBound} below.  Ultimately, we obtain a bound of the order of the diameter of $\Gamma_n$, divided by its volume, as in Equation \eqref{EVFinalBound}. In the proof (and elsewhere in the paper) we make the following distinction between ``using'' and ``touching'' a box. A path $r=(r_0, r_1, \dots, r_k)$ uses a box if some $r_i$ is inside the box, whereas it touches the box if any of its line segments intersect the box. In these definitions, $0$ and $ne_1$ are always used by the $T''$-geodesic from $0$ to $ne_1$, even though they are almost-surely not Poisson points.

\begin{lem} \label{MaxV}
There exists a constant $C > 0$ such that for all $n \geq 1$,
\[
\sup_i \mathbb{E}|V_i| \leq Cn^{-1/4\alpha}.
\]
\end{lem}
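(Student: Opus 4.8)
The plan is to exploit the averaging in $F_n$ together with translation invariance to show that the influence of any single box $\mathcal{B}_i$ on $F_n$ is small. Fix a box $\mathcal{B}_i$. As in the proof of Lemma~\ref{lin}, we bound $\mathbb{E}|V_i| \leq \mathbb{E}|\tilde F_{n,i} - F_n|$, and since resampling only increases or only decreases each term $T''(z,z+ne_1)$ in a controlled way, we reduce to the situation where all points of $\mathcal{B}_i$ are removed. Thus
\[
\mathbb{E}|V_i| \leq \frac{C}{|\Gamma_n|} \sum_{z \in \Gamma_n} \mathbb{E}\left[\left(T''_{\mathcal{B}_i,\mathbf{0}}(z) - T''(z,z+ne_1)\right)\mathbbm{1}_{\{\mathcal{B}_i \text{ used}\}}(z)\right].
\]
The key point is that the term for a given $z$ can only be nonzero if the $T''$-geodesic from $z$ to $z+ne_1$ actually uses a point of $\mathcal{B}_i$. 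Since $\mathcal{B}_i$ is a single unit box and $\Gamma_n$ has diameter of order $n^{1/4\alpha}$, a fixed geodesic from $z$ to $z+ne_1$ can be ``close to $\mathcal{B}_i$'' for only a bounded range of translates $z$; more precisely, I would like to say that $\sum_{z\in\Gamma_n}\mathbbm{1}_{\{\mathcal{B}_i\text{ used by geodesic from }z\}}$ is typically of order at most the diameter of $\Gamma_n$, i.e. $O(n^{1/4\alpha})$, because a geodesic, being close to a straight line by the shape theorem, passes near a given box for only $O(n^{1/4\alpha})$ of the $n^{1/4\alpha}$-worth of parallel translates.

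Next I would handle the size of each nonzero term $T''_{\mathcal{B}_i,\mathbf{0}}(z) - T''(z,z+ne_1)$. When the geodesic uses $\mathcal{B}_i$, deleting the points of $\mathcal{B}_i$ forces a local detour, and the extra cost is controlled by the lengths of the geodesic segments incident to the used point in $\mathcal{B}_i$, via the near-triangle-inequality for $\phi_n$ (Lemma~\ref{PhiIneq}). This is exactly the type of quantity controlled by the greedy-lattice-animal machinery: summing such increments over all boxes and all $z$ gives something like the total (modified) length of the geodesics, which is linear in $n$ per geodesic. Combining the two estimates, $\sum_i \mathbb{E}|V_i| $ splits as a product of ``number of boxes a given geodesic uses near a point'' ($O(n^{1/4\alpha})$ per $z$, after reorganizing the sum over $i$) times ``average detour cost,'' and dividing by $|\Gamma_n| \asymp n^{d/4\alpha}$ — actually, what we want is the bound on $\sup_i$, not the sum. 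For the supremum, the gain comes purely from the $1/|\Gamma_n|$ prefactor being partially cancelled: for fixed $i$, the number of $z \in \Gamma_n$ whose geodesic can reach the fixed box $\mathcal{B}_i$ is at most $C|\Gamma_n|/n^{1/4\alpha} \cdot (\text{something})$ — I would instead argue directly that, for fixed $i$ and random $z$ uniform in $\Gamma_n$, the expected detour cost is $O(1)$ while the fraction of $z$ for which the geodesic uses $\mathcal{B}_i$ is $O(n^{-1/4\alpha})$ away from the endpoints, which yields $\mathbb{E}|V_i| \leq Cn^{-1/4\alpha}$ after taking the worst box.

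To make the fraction estimate rigorous I would use the box-counting bound of Lemma~\ref{BoxBound} (referenced via Equation~\eqref{Gamma-BoxBound}): with high probability, no $T''$-geodesic between two points of $\Gamma_n$-translated segments uses more than $Cn$ unit boxes, and more importantly the geodesic stays within a tube of width $o(n)$ around the segment $[z, z+ne_1]$, so for a fixed box $\mathcal{B}_i$ at transverse distance $t$ from the line through $0$ and $ne_1$, only those $z$ within transverse distance $O(1)$ of $\mathcal{B}_i$ — a set of size $O(n^{(d-1)/4\alpha} \cdot 1)$ out of $|\Gamma_n| \asymp n^{d/4\alpha}$ when $\mathcal{B}_i$ is within the relevant range, hence a fraction $O(n^{-1/4\alpha})$. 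The main obstacle, and where I expect the real work to be, is controlling the detour cost $T''_{\mathcal{B}_i,\mathbf{0}}(z) - T''(z,z+ne_1)$ uniformly enough: a priori this difference could be large if the geodesic has a long segment through $\mathcal{B}_i$, or if the Poisson process is sparse near $\mathcal{B}_i$ so that rerouting is expensive. Handling this requires the moment bounds on geodesic segment lengths and on empty regions near geodesics — precisely the content of Lemma~\ref{TZeroBound} and the greedy-lattice-animal estimates of Section~\ref{GreedyLattice} — to show the detour cost has bounded expectation even after conditioning on the geodesic using $\mathcal{B}_i$, and then one combines this with the fraction bound, using Cauchy–Schwarz or Hölder to decouple the "cost" and "usage" factors if they are not independent.
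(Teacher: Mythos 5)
There is a genuine gap, and it sits at the heart of your argument. Your bound on $\sup_i \mathbb{E}|V_i|$ rests on the claim that, for a fixed box $\mathcal{B}_i$, only a fraction $O(n^{-1/4\alpha})$ of the translates $z \in \Gamma_n$ have $T''$-geodesics from $z$ to $z+ne_1$ that use $\mathcal{B}_i$, justified by saying the geodesic stays in a narrow tube (transverse width $O(1)$, or ``$o(n)$'') around the straight segment. No such localization is available: transversal fluctuation bounds of that strength are a major open problem in this model, the shape theorem gives nothing at scale $O(1)$, and a tube of width $o(n)$ buys nothing here since all of $\Gamma_n$ has diameter $n^{1/4\alpha}=o(n)$ anyway. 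Your caveat ``away from the endpoints'' also leaves unaddressed exactly the boxes near $0$ and $ne_1$, which are the high-influence ones the averaging is supposed to tame; and the companion claim that the detour cost has conditional expectation $O(1)$ \emph{given} that $\mathcal{B}_i$ is used is not justified (conditioning on usage can correlate with long incident segments, and the $O(1)$-per-box heuristic from Lemma~\ref{lin} is only an average over the $\sim n$ boxes a geodesic uses).

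The paper avoids any localization statement by a translation-invariance reindexing: since $\mathbb{E}|T''(z,z+ne_1)-\tilde T''_i(z,z+ne_1)|$ depends only on the relative position of the box and the endpoints, the sum over $z\in\Gamma_n$ with $\mathcal{B}_i$ fixed equals the sum over translated boxes $\mathcal{B}_m \in \mathcal{B}_i+\Gamma_n$ with the single geodesic from $0$ to $ne_1$ fixed. The total detour cost over all those boxes is then bounded, via the modified triangle inequality (Lemma~\ref{PhiIneq}), by the passage time of the geodesic between its first and last visits to the region $\mathcal{B}_i+\Gamma_n$, plus the number of boxes it touches there, plus two boundary segments; each of these has expectation $O(n^{1/4\alpha})$, the diameter of $\Gamma_n$, by Lemmas~\ref{TprimeTails}, \ref{BoxBound} (as in \eqref{Gamma-BoxBound}) and \ref{MaxLengthBound} --- with no need to know where the geodesic goes. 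Dividing by $|\Gamma_n|\gtrsim n^{2/4\alpha}$ (here $d\ge 2$ is used) gives $Cn^{-1/4\alpha}$ uniformly in $i$, including boxes at the endpoints. In short: the quantity you should bound is the expected number (and cost) of translated boxes used by one geodesic inside a region of diameter $n^{1/4\alpha}$, not the probability that one fixed box is used by many translated geodesics; the former is controlled by local passage-time and box-count estimates already in the paper, while the latter requires geodesic wandering bounds that do not exist.
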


\begin{proof}

As in the proof of Lemma \ref{lin}, we let $\tilde T_i''$ represent $T''$ after the points in box $\mathcal{B}_i$ are resampled.  First, we note that, as in \eqref{eq: v_i_equiv}:
\begin{equation}\label{eq: new_v_i_equiv}
\mathbb{E}|V_i| \leq \frac{1}{|\Gamma_n|} \sum_{z \in \Gamma_n} \mathbb{E}|T''(z, z + ne_1) - \tilde T_i''(z, z + ne_1)|.
\end{equation}
Let $\mathcal{B}_i + \Gamma_n = \{\mathcal{B}_i + z : z \in \Gamma_n\}$ represent the set of boxes $\mathcal{B}$ that are translates of $\mathcal{B}_i$ by some element $z \in \Gamma_n$, and let $\mathbbm{1}_{\{\mathcal{B} \mbox{\scriptsize\ used}\}}(0)$ be the indicator function of the event that the $T''$-geodesic from $0$ to $ne_1$ uses a point from the box $\mathcal{B}$.

For any box $\mathcal{B}_m$ that is used by a $T''$-geodesic, let $s_m^-$ be the first point the $T''$-geodesic uses from $\mathcal{B}_m$, and let $r_m^-$ be the point immediately preceding $s_m^-$.  Similarly, let $s_m^+$ be the last point the $T''$-geodesic uses from $\mathcal{B}_m$, and let $r_m^+$ be the next point the $T''$-geodesic uses after $s_m^+$.  Finally, if the box $\mathcal{B}_m$ contains $0$, let $r_m^- = s_m^- = 0$.  Likewise, if $\mathcal{B}_m$ contains $ne_1$, let $r_m^+ = s_m^+ = ne_1$.

By translation invariance, we can reindex the sum in \eqref{eq: new_v_i_equiv} to obtain the following:

\begin{align*}
\mathbb{E}|V_i|
&\leq \frac{1}{|\Gamma_n|} \sum_{\mathcal{B}_m \in \mathcal{B}_i + \Gamma_n} \mathbb{E} \big| T''(0, ne_1) - \tilde T_m''(0, ne_1) \big|\\
&\leq \frac{2}{|\Gamma_n|} \sum_{\mathcal{B}_m \in \mathcal{B}_i + \Gamma_n} \mathbb{E}\big| T''(0,ne_1) - \tilde T_m''(0,ne_1)\big| \mathbbm{1}_{\{\tilde T_m'' \geq T''\}} \\
&\leq \frac{2}{|\Gamma_n|} \sum_{\mathcal{B}_m \in \mathcal{B}_i + \Gamma_n} \mathbb{E} \big| \phi_n(\lVert r_m^- - r_m^+\rVert) \mathbbm{1}_{\{\mathcal{B}_m \mbox{\scriptsize\ used}\}}(0) \big|,
\end{align*}
which is very similar to \eqref{UsedPhiIneq} from the proof of Lemma \ref{TZeroBound}.  Then, we can use the modified triangle inequality for $\phi_n$ described in Lemma \ref{PhiIneq} to bound this by the following:
\begin{equation} \label{PreviousPartialAvgBoxBound}
\frac{2^{2\alpha +1}}{|\Gamma_n|}\, \mathbb{E}\left[\sum_{\mathcal{B}_m \in \mathcal{B}_i + \Gamma_n} \big[ \phi_n(\lVert r_m^- - s_m^-\rVert) + \phi_n(\lVert s_m^- - s_m^+\rVert) + \phi_n(\lVert s_m^+ - r_m^+\rVert)  \big] \mathbbm{1}_{\{\mathcal{B}_m \mbox{\scriptsize\ used}\}}(0) \right].
\end{equation}
Let $\mathcal{B}_{t_1}$ be the first box in $\mathcal{B}_i + \Gamma_n$ the $T''$-geodesic from $0$ to $ne_1$ uses, and let $\mathcal{B}_{t_2}$ be the last box it uses before leaving $\mathcal{B}_i + \Gamma_n$ for the last time if it does indeed leave, or the box containing $ne_1$ if it does not.  Let $\mathcal{C}(x, y)$ be the set of boxes the $T''$-geodesic from $x$ to $y$ touches, and let $\#\mathcal{C}(x, y)$ be the number of these boxes.  Because $s_{t_1}^-$ and $s_{t_2}^+$ are already points in $Q_n$ (or $0$ or $ne_1$), $\#\mathcal{C}(s_{t_1}^-, s_{t_2}^+)$ is the number of boxes the $T''$-geodesic from $0$ to $ne_1$ touches while using boxes in $\mathcal{B}_i + \Gamma_n$.

Note that each $\phi_n(\lVert r_m^- - s_m^- \rVert)$ and $\phi_n(\lVert s_m^+ - r_m^+\rVert)$ in Equation \eqref{PreviousPartialAvgBoxBound} corresponds to the length of a segment in the original $T''$-geodesic.  Recognizing that it is possible for $r_k^- = s_j^+$ for some pair $j \neq k$, we can bound the sum of these terms by $2T''(s_{t_1}^-, s_{t_2}^+) + \phi_n(\lVert r_{t_1}^- - s_{t_1}^-\rVert) + \phi_n(\lVert s_{t_2}^+ - r_{t_2}^+\rVert)$, where we separated the segments entering and leaving $\mathcal{B}_i + \Gamma_n$.  Additionally, each $\phi_n(\lVert s_m^- - s_m^+\rVert)$ is bounded deterministically by a constant, because the $s_m^{\pm}$ are within the same unit box.  Combining these, we can use Equation \eqref{PreviousPartialAvgBoxBound} to obtain the following inequality for some $C > 0$:
\begin{equation} \label{PartialAvgBoxBound}
\mathbb{E} |V_i| \leq \frac{C}{|\Gamma_n|} \bigg[ \mathbb{E} \left[\phi_n(\lVert r_{t_1}^- - s_{t_1}^-\rVert) +  T''(s_{t_1}^-, s_{t_2}^+) + \phi_n(\lVert s_{t_2}^+ - r_{t_2}^+\rVert) \right] + \mathbb{E} \#\mathcal{C}(s_{t_1}^-, s_{t_2}^+) \bigg].
\end{equation}
Lemma \ref{MaxLengthBound} below analyzes the maximum length of segments in a $T''$-geodesic.  From it, we can conclude that $\mathbb{E} \phi_n(\lVert r_{t_1}^- - s_{t_1}^-\rVert)$ and $\mathbb{E} \phi_n(\lVert s_m^+ - r_m^+\rVert)$ are both bounded by $C_1n^{1/4\alpha}$.

Now, in order to bound $\mathbb{E} T''(s_{t_1}^-, s_{t_2}^+)$, we will find nearly-exponential tails for $T''(s_{t_1}^-, s_{t_2}^+)$.  Bounding the tails of $T''(s_{t_1}^-, s_{t_2}^+)$ will take two steps: first, we will consider all pairs of points $(x, y)$ which are at the centers of unit boxes within $\mathcal{B}_i + \Gamma_n$, and we will show that it is unlikely for any pair to have a large $T''(x, y)$.  Then, we will relate $s_{t_1}^-$ and $s_{t_2}^+$ to the centers of the boxes containing them.  With this in mind, consider any pair of unit boxes in $\mathcal{B}_i + \Gamma_n$.  There are at most $[2 n^{1/4\alpha} + 1]^d \leq [3n^{1/4\alpha}]^d$ unit boxes in $\mathcal{B}_i + \Gamma_n$, and so there are at most $[3n^{1/4\alpha}]^{2d}$ pairs of such boxes.

Let $x$ and $y$ be the center points of any such pair of boxes.  Then, $\lVert x - y \rVert \leq 2\sqrt{d}n^{1/4\alpha}$, and so by Lemma \ref{TprimeTails}, we have for $C_1$ sufficiently large and $c_2$ sufficiently small,
\[
\mathbb{P}(T''(x, y) \geq \ell) \leq C_1\exp\left(-c_2 \ell^\kappa\right)
\]
for $\kappa = \min(1, d/\alpha)$, assuming $\ell \geq C_1n^{1/4\alpha}$.   Then, let $z_{\mathcal{B}_i}$ be the center of box $\mathcal{B}_i$, and let $\mathcal{D} = \{x \in \mathbb{Z}^d: \lVert x - z_{\mathcal{B}_i} \rVert_\infty \leq n^{1/4\alpha}\}$ be the set of points that are the centers of boxes in $\mathcal{B}_i + \Gamma_n$.  We have:
\begin{align}
\mathbb{P}\left(\max_{x, y \in \mathcal{D}} T''(x, y) \geq \ell \right) \leq \sum_{x, y \in \mathcal{D}} \mathbb{P}\left(T''(x, y) \geq \ell\right) &\leq \left[3n^{1/4\alpha}\right]^{2d} C_1\exp(-c_2 \ell^\kappa) \nonumber \\
&\leq C_1\exp\left(-c_2 \ell^\kappa\right), \label{TprimeAverageCenterBound}
\end{align}
where on the last line, $C_1$ is larger and $c_2$ is smaller.

Now, let $a$ and $b$ be the center points of the unit boxes containing $s_{t_1}^-$ and $s_{t_2}^+$, respectively.  Then, let $(r_0, r_1, \ldots, r_{k + 1})$ be the $T''$-geodesic connecting $a$ and $b$, with $a = r_0$ and $r_{k + 1} = b$.  We break into cases depending on whether $k > 0$ or $k = 0$.  Assuming $k > 0$, one possible path between $s_{t_1}^-$ and $s_{t_2}^+$ is given by $s_{t_1}^-, r_1, r_2, \ldots, r_k, s_{t_2}^+$.  Note that this path cannot use points $a$ and $b$, because they are almost surely not Poisson points in $Q$.  Then,
\begin{align}
T''\left(s_{t_1}^-, s_{t_2}^+\right) &\leq \phi_n \left( \lVert s_{t_1}^- - r_1 \rVert \right) + \phi_n \left( \lVert s_{t_2}^+ - r_k \rVert \right) + \bigg[ T''(a, b) - \phi_n \left( \lVert r_1 - a \rVert \right) - \phi_n \left( \lVert r_k - b \rVert \right) \bigg] \nonumber \\
&\leq 2^\alpha \bigg[ \phi_n \left( \lVert s_{t_1}^- - a \rVert \right) + \phi_n \left( \lVert a - r_1 \rVert \right) + \phi_n \left( \lVert s_{t_2}^+ - b \rVert \right) + \phi_n \left( \lVert b - r_k \rVert \right) \bigg] \nonumber \\
& \ \ \ \ + \bigg[ T''(a, b) - \phi_n \left( \lVert r_1 - a \rVert \right) - \phi_n \left( \lVert r_k - b \rVert \right) \bigg] \nonumber\\
&\leq 2^\alpha \bigg[ \phi_n \left( \lVert a - s_{t_1}^- \rVert \right) + \phi_n \left( \lVert b - s_{t_2}^+ \rVert \right) + T''(a, b) \bigg], \label{Version1}
\end{align}
where the second inequality is true by using the modified triangle inequality for $\phi_n$ given by Lemma \ref{PhiIneq}.  Now, for the case when $k = 0$, the $T''$-geodesic between $a$ and $b$ is the direct path from $a$ to $b$.  By using Lemma \ref{PhiIneq} twice, we have:
\begin{align}
T''\left(s_{t_1}^-, s_{t_2}^+\right) \leq \phi_n \left( \lVert s_{t_1}^- - s_{t_2}^+ \rVert \right) &\leq 2^{2\alpha}\left[ \phi_n \left( \left \lVert s_{t_1}^- - a \right \rVert \right) + \phi_n\left( \left\lVert a - b \right \rVert \right) + \phi_n \left( \lVert b - s_{t_2}^+ \rVert \right)\right] \nonumber\\
&= 2^{2\alpha}\left[ \phi_n \left( \left \lVert s_{t_1}^- - a \right \rVert \right) + \phi_n \left( \lVert b - s_{t_2}^+ \rVert \right)+ T''(a, b)\right]. \label{Version2}
\end{align}
We see that between Equations \eqref{Version1} and \eqref{Version2}, the second one is weaker and thus holds for any value of $k \geq 0$.  Since $a$ and $s_{t_1}^-$ are in the same $d$-dimensional unit box, $\phi_n\left(\lVert a - s_{t_1}^- \rVert\right) \leq \phi_n( \sqrt{d}) \leq d^{\alpha/2}$.  The same is true for $s_{t_2}^+$ and $b$.  Thus, rearranging Equation \eqref{Version2} gives:
\[
T''(a, b) \geq 2^{-2\alpha} T''(s_{t_1}^-, s_{t_2}^+) - 2d^{\alpha/2}.
\]
So, if $T''(s_{t_1}^-, s_{t_2}^+) \geq 2^{2\alpha+1}\ell$ for $\ell \geq C_1 n^{1/4\alpha}$ and $C_1$ is sufficiently large, then $T''(a, b) \geq  \ell$.  Thus,
\begin{align*}
 \mathbb{P}\left( T''(s_{t_1}^-, s_{t_2}^+) > 2^{2\alpha+1} \ell \right) \leq  \mathbb{P}\left( T''(a, b) > \ell \right) &\leq \mathbb{P}\left(\max_{(x, y) \in \mathcal{D}} T''(x, y) \geq \ell \right) \nonumber\\
 &\leq C_1 \exp\left(-c_2 \ell^\kappa\right), \label{TPrimeAvgPart1}
\end{align*}
from Equation \eqref{TprimeAverageCenterBound}, for perhaps a smaller $c_2$ and larger $C_1$, where $\kappa = \min(1, d/\alpha)$.  From this, we can conclude that for $C$ sufficiently large,
\[
\mathbb{E} T''(s_{t_1}^-, s_{t_2}^+) \leq C_1n^{1/4\alpha} + \int_{C_1n^{1/4\alpha}}^\infty \mathbb{P}(T''(s_{t_1}^-,s_{t_2}^+) \geq \lambda)~\text{d}\lambda \leq C n^{1/4\alpha}.
\]
Thus, it remains to bound $\mathbb{E}\#\mathcal{C}(s_{t_1}^-, s_{t_2}^+)$ in Equation \eqref{PartialAvgBoxBound}.  To do so, consider all the unit boxes $\mathcal{B} \in \mathcal{B}_i + \Gamma_n$.  For any two boxes $\mathcal{B}_j, \mathcal{B}_k \in \mathcal{B}_i + \Gamma_n$, the centers of the boxes are within $2\sqrt{d} n^{1/4\alpha}$ of each other.  By choosing $\gamma = 1/4\alpha$ and $C_3 = 2\sqrt{d}$ in Lemma \ref{BoxBound}, we can conclude that for $C_1$ large enough and some $c_2$, for all $\ell \geq C_1 n^{1/4\alpha}$,
\begin{equation} \label{Gamma-BoxBound}
\mathbb{P}\left( \max_{v \in B_j, w \in B_k} \# \mathcal{C}(v, w) \geq \ell\right) \leq C_1 \exp\left(-c_2 \ell^\kappa\right),
\end{equation}
where $\kappa = \min(1, d/\alpha)$ again.  Therefore,
\begin{align*}
\mathbb{P}\left( \# \mathcal{C}(s_{t_1}^-, s_{t_2}^+) \geq \ell \right) 
&\leq \sum_{\mathcal{B}_j, \mathcal{B}_k \in \mathcal{B}_i + \Gamma_n} \mathbb{P}\left( \max_{w \in \mathcal{B}_j, v \in \mathcal{B}_k} \#\mathcal{C}(w, v) \geq \ell\right)\\
&\leq C_4 \left(n^{1/4\alpha}\right)^{2d} C_1 e^{-c_2 \ell^{\kappa}},
\end{align*}
where the last line is true for some $C_4 > 0$ by using Equation \eqref{Gamma-BoxBound} and also bounding the total number of pairs of unit boxes within $\Gamma_n$.  In turn, this implies:
\[
\mathbb{P}\left( \# \mathcal{C}(s_{t_1}^-, s_{t_2}^+) \geq \ell \right) \leq C_1 \exp\left(-c_2\ell^\kappa\right)
\]
for a potentially larger choice of $C_1$ and smaller choice of $c_2$, since $\ell \geq C_1 n^{1/4\alpha}$.  These exponential tails imply that $\mathbb{E} \# \mathcal{C}(s_{t_1}^-, s_{t_2}^+) \leq C_1 n^{1/4\alpha}$ for a potentially larger $C_1$.  Plugging this into Equation \eqref{PartialAvgBoxBound} and recalling that $d \geq 2$ gives for some large enough $C_1$ and for each $i$,
\begin{equation} \label{EVFinalBound}
\mathbb{E}|V_i| \leq C_1 \frac{n^{1/4\alpha}}{|\Gamma_n|} \leq C_1 \frac{n^{1/4\alpha}}{\left(n^{1/4\alpha} \right)^2} = C_1n^{-1/4\alpha}.
\end{equation}
This completes the proof.
\end{proof}
\hrule
\section{Bounding entropy} \label{Entropy}
 
 The goal of this section will be to derive the following bound:
 \begin{lem} \label{EntropyBoundGoal}
 There exists a constant, $C$, such that for all $n \geq 1$,
 \[
 \sum_i \Ent V_i^2 \leq Cn.
 \]
 \end{lem}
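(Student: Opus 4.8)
## Proof proposal for Lemma~\ref{EntropyBoundGoal}

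The plan is to run the standard entropy-via-logarithmic-Sobolev machinery, but respecting the fact that the environment in each box $\mathcal{B}_i$ is encoded by a Poisson variable $P_i$ together with a sequence of uniform variables $U_{i,j}$. As indicated in the framework subsection on logarithmic Sobolev inequalities, since no log-Sobolev inequality holds for Poisson variables, the first move is to re-encode each $P_i$ in binary as a sequence of Bernoulli random variables $(\xi_{i,k})_{k\ge 1}$, so that $F_n$ becomes a function of a family of independent Bernoulli variables (from the $P_i$'s) and independent uniform variables (the $U_{i,j}$'s). For the Bernoulli coordinates one uses the discrete/two-point logarithmic Sobolev inequality, and for the uniform coordinates on $[0,1)^d$ one uses the log-Sobolev inequality for the uniform measure (equivalently, tensorize the one-dimensional Poincaré/log-Sobolev inequality). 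Tensorizing over all coordinates associated to boxes $\mathcal{B}_1,\dots,\mathcal{B}_i$ and applying it to $V_i = \mathbb{E}[F_n\mid\mathcal{F}_i] - \mathbb{E}[F_n\mid\mathcal{F}_{i-1}]$, which is $\mathcal{F}_i$-measurable and has mean zero, yields a bound of the form
\[
\Ent V_i^2 \;\le\; C\sum_{\text{Bernoulli coords }e\text{ in boxes }\le i} \mathbb{E}\big[(\Delta_e V_i)^2\big] \;+\; C\sum_{\text{uniform coords }u\text{ in boxes }\le i}\mathbb{E}\big[\lVert\nabla_u V_i\rVert^2\big],
\]
where $\Delta_e$ is the discrete difference in the Bernoulli coordinate $e$ and $\nabla_u$ is the gradient in a uniform coordinate $u$. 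The next step is the usual observation that, because conditioning is an $L^2$-contraction and $V_i$ is a martingale increment, resampling a coordinate $e$ (resp. differentiating in $u$) that belongs to a box $\mathcal{B}_j$ with $j<i$ produces a difference that telescopes; one replaces $\Delta_e V_i$ by $\mathbb{E}[\Delta_e F_n \mid \mathcal{F}_i]$ (and similarly for $\nabla_u$), and then sums over $i$. Since each coordinate $e$ or $u$ lives in exactly one box, after summing over $i$ the contraction property collapses the double sum, and we are left with
\[
\sum_i \Ent V_i^2 \;\le\; C\sum_{\text{all Bernoulli coords }e}\mathbb{E}\big[(\Delta_e F_n)^2\big] \;+\; C\sum_{\text{all uniform coords }u}\mathbb{E}\big[\lVert\nabla_u F_n\rVert^2\big].
\]

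It then remains to show each of the two sums on the right is $O(n)$. For the uniform coordinates: moving a single Poisson point (changing one $U_{i,j}$ while keeping $P_i$ fixed) changes $T''(z,z+ne_1)$ by an amount controlled by the $\phi_n$-lengths of the geodesic segments incident to that point — this is exactly the kind of quantity the paper says is handled by the ``removing a point'' estimates — so $\lVert\nabla_u F_n\rVert$ is dominated, via a greedy-lattice-animal bound, by the sum over $z\in\Gamma_n$ of the (squared) incident segment lengths of the $T''(z,z+ne_1)$-geodesic near the relevant box, divided by $|\Gamma_n|$; summing over $u$ and $i$ and invoking Lemma~\ref{TZeroBound}-type bounds gives $O(n)$. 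For the Bernoulli coordinates: flipping a bit of $P_i$ either removes points from $\mathcal{B}_i$ or adds points to $\mathcal{B}_i$. Removal is again controlled by segment lengths through $\mathcal{B}_i$; addition is the genuinely new difficulty, and it is controlled by Lemma~\ref{OffSegmentBound} (the generalization of Eq.~(3.10) of \cite{Howard:2001}), which says adding points can only change the geodesic substantially when there is a large Poisson-empty region near it — an event with nearly-exponential tails. Either way, the per-coordinate contribution is summable, and because the binary encoding of a Poisson variable has expected logarithmically many ``active'' bits (the high-order bits are almost surely $0$ and contribute nothing), the sum over all Bernoulli coordinates in box $\mathcal{B}_i$ is still $O(1)$ per box, hence $O(n)$ overall after summing the $n^{1-1/4\alpha}$-or-so relevant boxes against the $\Gamma_n$-averaging.

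The main obstacle is the Bernoulli ``point-addition'' term: unlike on the lattice, where changing one edge-weight perturbs a single edge, flipping a high bit of $P_i$ can insert many new Poisson points into $\mathcal{B}_i$ at once, and a priori this could reroute the geodesic over a long distance. Controlling $(\Delta_e F_n)^2$ here requires the ``empty-region'' mechanism of Lemma~\ref{OffSegmentBound} together with a greedy-lattice-animal estimate to sum the resulting contributions over all boxes and all of $\Gamma_n$ without losing more than a constant factor; making the encoding interact cleanly with the log-Sobolev tensorization (so that the infinitely many inactive bits truly contribute zero and do not spoil the bound) is the delicate bookkeeping that the rest of Section~\ref{Entropy} must carry out.
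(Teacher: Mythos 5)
Your proposal follows the same skeleton as the paper's argument: tensorization of entropy together with the Bernoulli and uniform logarithmic Sobolev inequalities (this is Lemma~\ref{EntropyDerivativeBound}), collapse of the double sum over $i$ by martingale orthogonality, reduction from $\ol{F_n}$ to $\ol{T}''$ by Jensen's inequality and translation invariance, and then separate derivative bounds for the uniform coordinates (incident segment lengths, as in Lemma~\ref{UniformBound}) and the Bernoulli coordinates (point removal via Lemma~\ref{TZeroBound}, point addition via Lemma~\ref{TInfinityBound}, which rests on Lemma~\ref{OffSegmentBound}), with greedy lattice animal estimates summing the contributions.

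There is, however, a genuine problem in the step where you dispose of the infinitely many Bernoulli coordinates in each box. Your justification --- that ``the binary encoding of a Poisson variable has expected logarithmically many active bits, the high-order bits being almost surely $0$'' --- indicates the literal base-$2$ digits of $P_i$; those digits are neither independent nor uniform on $\{0,1\}$, so the product-measure tensorization and the Bonami--Gross inequality (Theorem~\ref{logSobolevBernoulli}) do not apply to them. The encoding that makes the scheme work is different: one takes i.i.d.\ fair bits $\omega_{\mathcal{B},j}$, forms the uniform variable $\sum_j \omega_{\mathcal{B},j}2^{-j}$, and applies the generalized inverse $\mathcal{D}^{-1}$ of the Poisson distribution function. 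Under this encoding every bit is a fair coin, and the summability over $j$ is not a matter of ``few active bits'': flipping bit $j$ can in principle change the point count for any $j$, and the worst-case change is always as large as $|T''_{\mathcal{B},\infty}-T''_{\mathcal{B},\mathbf{0}}|$, so a per-bit worst-case bound does not sum. What actually controls the sum over $j$ is the monotonicity of $T''$ in the number of points in $\mathcal{B}$, a telescoping bound of the dyadic differences by $(T''_{\mathcal{B},\infty}-T''_{\mathcal{B},\mathbf{0}})^2$ restricted to the event that $\mathcal{B}$ is used, and the exact computation that converts the $2^{-(j-1)}$-weighted sum of the resulting indicators into the geometric variable $M_{\omega_\mathcal{B}}$ (the initial run of ones); Cauchy--Schwarz then separates $\mathbb{E}\sum_{\mathcal{B}}(T''_{\mathcal{B},\infty}-T''_{\mathcal{B},\mathbf{0}})^4\mathbbm{1}_{\{\mathcal{B}\text{ used}\}}$, handled by Lemmas~\ref{TZeroBound} and~\ref{TInfinityBound}, from $\mathbb{E}\sum_{\mathcal{B}}M_{\omega_\mathcal{B}}^2\mathbbm{1}_{\{\mathcal{B}\text{ used}\}}$, handled by Theorem~\ref{GreedyLatticeThm} together with the tail bound on $\#\mathcal{C}$ from Lemma~\ref{BoxBound}. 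Relatedly, your closing bookkeeping is off: the number of boxes contributing here is of order $n$ (boxes touched by the geodesic, with the large-$\#\mathcal{C}$ event killed by Lemma~\ref{BoxBound}), not ``$n^{1-1/4\alpha}$ relevant boxes'' --- that exponent belongs to the bound on $\sum_i(\mathbb{E}|V_i|)^2$ in Lemma~\ref{SumVBound}, not to the entropy bound. A similar accounting issue, which you pass over, appears on the uniform side: each box carries $p_\mathcal{B}$ uniform coordinates, so the sums $\sum_{\mathcal{B}}\mathbb{E}\,p_{\mathcal{B}}\mathbbm{1}_{\{\mathcal{B}\text{ used}\}}$ and $\sum_{\mathcal{B}}\mathbb{E}\,L_{\mathcal{B}}^{2\alpha-2}p_{\mathcal{B}}\mathbbm{1}_{\{\mathcal{B}\text{ used}\}}$ must themselves be bounded linearly by lattice-animal arguments, not just the segment-length bound alone.
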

Recall that we decompose the Poisson process which determines the points in $Q_n$ into independent processes on each unit box $\mathcal{B}$ in $\mathbb{R}^d$.  Within each unit box $\mathcal{B}$, a Poisson random variable determines how many points are in the box, and uniform random variables determine the positions of the points.  Following the techniques in \cite{Damron:2015}, we will encode the Poisson random variable through an infinite sequence of Bernoulli random variables, described below.  Once this is done, we can apply logarithmic Sobolev inequalities to argue that the sum of the entropy, $\sum_i \Ent V_i^2$, is bounded by the sum of derivatives with respect to these uniform and Bernoulli random variables, as described in Lemma \ref{EntropyDerivativeBound} below.

To begin, define $\ol{\Omega} = \prod_\mathcal{B} \Omega_\mathcal{B}$, where for each unit box $\mathcal{B}$, $\Omega_\mathcal{B} = \Omega_{\mathcal{B}, 1} \times \Omega_{\mathcal{B}, 2}$ with $\Omega_{\mathcal{B}, 1} = \{0, 1\}^{\mathbb{N}}$ and $\Omega_{\mathcal{B}, 2} = ([0, 1]^d)^{\mathbb{N}}$.  Then, define the measure on $\Omega_\mathcal{B}$ to be $\pi_\mathcal{B}$, a product measure $\left(\prod_{j \geq 1} \pi_{\mathcal{B}, j}\right) \times \left(\prod_{j \geq 1} \nu_{\mathcal{B}, j}\right)$, where $\pi_{\mathcal{B}, j}$ is uniform on $\{0, 1\}$, and $\nu_{\mathcal{B}, j}$ is uniform on $[0, 1]^d$.  Also, the measure on $\ol{\Omega}$ will be $\pi = \prod_{\mathcal{B}} \pi_\mathcal{B}$. Here, the $\pi_{\mathcal{B}, j}$ correspond to the Bernoulli encodings of the Poisson random variable corresponding to the number of points in box $\mathcal{B}$.  The $\nu_{\mathcal{B}, j}$ correspond to the location of the $j$th Poisson point (if it exists) within the box.

The binary sequence within $\Omega_{\mathcal{B}, 1}$ will be converted to a uniform number between $0$ and $1$, which can be used with the inverse of the cumulative distribution function of the Poisson distribution to define a Poisson random variable.  More explicitly, let $\mathcal{D}(x)$ be the cumulative distribution function for the Poisson distribution and write $\mathcal{D}^{-1}$ for its generalized inverse,
\[
\mathcal{D}^{-1}(t) = \inf\{x : \mathcal{D}(x) \geq t \}.
\]
We will represent elements of $\Omega_{\mathcal{B}, 1}$ as $\omega = (\omega_1, \omega_2, \ldots)$, and elements of $\Omega_{\mathcal{B}, 2}$ as $U = (u_1, u_2, \ldots)$.  Let $\Psi_\mathcal{B}: \Omega_\mathcal{B} \to \mathbb{N} \times \Omega_{\mathcal{B}, 2}$ be defined by:
\[
\Psi_\mathcal{B}(\omega, U) = \Psi_\mathcal{B}(\omega_1, \omega_2, \ldots, u_1, u_2, \ldots) := \left( \mathcal{D}^{-1} \left(\sum_{i = 1}^\infty \omega_i/2^i\right), u_1, u_2, \ldots \right).
\]
$\Psi_\mathcal{B}$ is a measurable function for each $\mathcal{B}$, and thus so is $\Psi := \prod_{\mathcal{B}} \Psi_\mathcal{B} : \ol{\Omega} \to \Omega$, which pushes the corresponding measure $\pi$ forward to our original Poisson process measure, $\mathbb{P}$, on $\Omega$.

Now, we can set $\ol{F_n} := F_n \circ \Psi$, and in this way, we can view $F_n$ as a function on $\ol{\Omega}$.   We will relate entropy to derivatives of $\ol{F_n}$ with respect to the Bernoulli and uniform random variables in the input of $\ol{F_n}$.  To do so, write any element of $\ol{\Omega}$ as 
\[
(\omega_{\mathcal{B}},U_{\mathcal{B}})_{\mathcal{B}} = (\omega_{\mathcal{B}, 1},\omega_{\mathcal{B},2}, \ldots, U_{\mathcal{B}, 1}, U_{\mathcal{B}, 2}, \ldots)_{{\mathcal{B}}},
\]
where each pair $(\omega_{\mathcal{B}}, U_{\mathcal{B}}) \in \Omega_{\mathcal{B}}$ represents the Bernoulli encoding of the Poisson random variable and the uniform random variables from box $\mathcal{B}$.  Then, define for any function $f : \ol{\Omega} \to \mathbb{R}$ the discrete derivatives,
\[
\left(\Delta_{\omega_\mathcal{B}, j} f\right)(\omega, U) = f\left(\omega_{\mathcal{B}, j}^+\right) - f\left(\omega_{\mathcal{B},j}^-\right),
\]
where $\omega_{\mathcal{B}, j}^+$ represents that the element $\omega_{\mathcal{B}, j}$ in $(\omega, U)$ is replaced by a $1$, and $\omega_{\mathcal{B}, j}^-$ represents that $\omega_{\mathcal{B}, j}$ has been replaced by a $0$.  Additionally, define derivatives with respect to the uniform random variables,
\[
\lVert \nabla_{U_{\mathcal{B}, i}} \ol{F_n}(\omega, U) \rVert^2 := \sum_{j = 1}^d \left( \frac{\partial}{\partial U_{\mathcal{B}, i, j}} \ol{F_n}(\omega, U)\right)^2.
\]
Here, the index, $i$, corresponds to the $i$th uniform random variable in box $\mathcal{B}$, where the index, $j$, corresponds to the $j$th component of this random variable in $[0, 1]^d$.  With this notation, we are able to state our result about a bound on the sum of the entropies:
\begin{lem} \label{EntropyDerivativeBound}
Let $\ol{F_n}$ be the pushforward of $F_n$ onto the space $\bar \Omega$, as defined above.  Then, there exists a constant $C > 0$ such that:
\[
\sum_i \Ent V_i^2 \leq C \sum_\mathcal{B} \left[ \sum_j \mathbb{E}_\pi \left(\Delta_{j, \mathcal{B}} \ol{F_n} \right)^2 + \sum_k \mathbb{E}_\pi \left\lVert \nabla_{U_{\mathcal{B}, k}} \ol{F_n} \right\rVert^2\right].
\]
\end{lem}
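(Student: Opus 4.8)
The plan is to reduce the statement to two applications of a logarithmic Sobolev inequality—one for Bernoulli sequences and one for uniform (Lebesgue) sequences—combined with the two-point ``tensorization'' structure of the product space $\ol{\Omega} = \prod_\mathcal{B} \Omega_\mathcal{B}$. Recall that $V_i = \mathbb{E}[F_n \mid \mathcal{F}_i] - \mathbb{E}[F_n \mid \mathcal{F}_{i-1}]$, so $V_i$ is $\mathcal{F}_i$-measurable with $\mathbb{E}[V_i \mid \mathcal{F}_{i-1}] = 0$; in particular $V_i^2$ is a function only of the variables attached to boxes $\mathcal{B}_1,\dots,\mathcal{B}_i$, and after pushing forward through $\Psi$ it is a function on $\ol{\Omega}$ depending only on the Bernoulli/uniform coordinates of those boxes. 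The first step is to show, for each fixed $i$, that $\Ent V_i^2$ is controlled by a sum over the ``new'' box $\mathcal{B}_i$ of the squared discrete derivative in the Bernoulli directions and the squared gradient in the uniform directions, \emph{evaluated on} $V_i^2$'s square root or on $V_i$ itself—here one must be slightly careful and use the standard trick of applying the log-Sobolev inequality to $V_i$ (not $V_i^2$) and using that for a martingale increment $\mathrm{Ent}(V_i^2) \le C \sum (\text{energy of } V_i)$, since $V_i$ itself has mean zero.

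Concretely, the second step is: for each box $\mathcal{B}$, the space $\Omega_\mathcal{B}$ carries the product measure of countably many symmetric Bernoullis and countably many uniforms on $[0,1]^d$. Each symmetric Bernoulli factor satisfies the two-point log-Sobolev inequality $\mathrm{Ent}_{\pi_{\mathcal{B},j}}(g^2) \le \tfrac12 \mathbb{E}(\Delta_{\omega_{\mathcal{B},j}} g)^2$, and Lebesgue measure on $[0,1]^d$ satisfies a log-Sobolev inequality $\mathrm{Ent}(g^2) \le C \mathbb{E}\|\nabla g\|^2$ (with a dimensional constant). By the tensorization property of the log-Sobolev inequality (equivalently, of the entropy functional) across independent coordinates, the product measure $\pi$ on all of $\ol{\Omega}$ satisfies
\[
\mathrm{Ent}_\pi(g^2) \le C \sum_{\mathcal{B}} \left[ \sum_j \mathbb{E}_\pi (\Delta_{\omega_{\mathcal{B},j}} g)^2 + \sum_k \mathbb{E}_\pi \|\nabla_{U_{\mathcal{B},k}} g\|^2 \right]
\]
for a universal $C$ depending only on $d$. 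Applying this with $g = V_i$ (viewed on $\ol{\Omega}$), using $\mathbb{E}_\pi V_i = 0$ so that $\mathrm{Ent}_\pi(V_i^2) = \mathbb{E}_\pi[V_i^2 \log V_i^2] - \mathbb{E}_\pi[V_i^2]\log\mathbb{E}_\pi[V_i^2] = \Ent V_i^2$, gives a per-$i$ bound. The third step is to sum over $i$ and interchange the order of summation: one gets $\sum_i \Ent V_i^2 \le C \sum_\mathcal{B} \sum_j \mathbb{E}_\pi \big[\sum_i (\Delta_{\omega_{\mathcal{B},j}} V_i)^2\big] + (\text{uniform terms})$, and then the key observation is that $\sum_i (\Delta_{\omega_{\mathcal{B},j}} V_i)^2 \le (\Delta_{\omega_{\mathcal{B},j}} \ol{F_n})^2$ and likewise $\sum_i \|\nabla_{U_{\mathcal{B},k}} V_i\|^2 \le \|\nabla_{U_{\mathcal{B},k}} \ol{F_n}\|^2$, because resampling a single coordinate and then taking conditional expectations/martingale differences cannot increase the total energy—this is the ``contraction'' or ``Efron–Stein-type'' property: $\Delta_{\omega_{\mathcal{B},j}}$ commutes appropriately with the conditional-expectation projections $\mathbb{E}[\cdot\mid\mathcal{F}_i]$, and the $V_i$ form an orthogonal decomposition of $\ol{F_n}$ whose pieces are each Lipschitz-dominated by $\ol{F_n}$ in the relevant single coordinate.

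I expect the main obstacle to be this last contraction step—making precise the claim that $\sum_i (\Delta_{\omega_{\mathcal{B},j}} V_i)^2 \le (\Delta_{\omega_{\mathcal{B},j}} \ol{F_n})^2$—together with the bookkeeping of \emph{which} coordinates a given $\Delta_{\omega_{\mathcal{B},j}}$ acts on relative to the filtration. The subtlety is that $\Delta_{\omega_{\mathcal{B},j}}$ acts on a coordinate of box $\mathcal{B}$; if $\mathcal{B} = \mathcal{B}_m$ then $V_i$ depends on this coordinate only for $i \ge m$, and one needs that the martingale-difference operator does not amplify the single-coordinate oscillation. The cleanest route is to fix the coordinate being differentiated, condition on everything else, view $\ol{F_n}$ as a function of just that one $\{0,1\}$ (or $[0,1]^d$) variable, observe that $\mathbb{E}[\cdot\mid\mathcal{F}_i]$ is an averaging operator (hence a contraction in every $L^p$ and in the relevant ``derivative'' seminorm), and that the $V_i$'s are the successive differences of these averages, so their squared derivatives telescope-dominate. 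A secondary, more technical nuisance is confirming that $\ol{F_n} = F_n\circ\Psi$ is genuinely (a.e.) differentiable in the uniform coordinates—this follows because $F_n$ is built from finitely many $T''$ passage times, each a minimum over paths of sums of $\phi_n$ of Euclidean distances, hence piecewise smooth in the point locations with controlled gradients—and that the Bernoulli discrete derivatives are well-defined, which is immediate. Once these points are in place, the inequality in Lemma~\ref{EntropyDerivativeBound} follows by assembling the tensorized log-Sobolev bound, the contraction of energies under the martingale decomposition, and Fubini to move the $\mathbb{E}_\pi$ inside.
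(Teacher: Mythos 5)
Your overall strategy is the same as the paper's: tensorize entropy over the product structure of $\ol{\Omega}$, apply the Bonami--Gross two-point inequality in each Bernoulli coordinate and the Euclidean log-Sobolev inequality in each uniform coordinate to $\ol{V_i}$ (which bounds $\Ent \ol{V_i}^2$ by the derivative energies of $\ol{V_i}$), and then convert those energies into derivative energies of $\ol{F_n}$. The step that fails as you state it is precisely the one you flag as the main obstacle: the pointwise ``contraction'' claim $\sum_i (\Delta_{\omega_{\mathcal{B},j}} V_i)^2 \le (\Delta_{\omega_{\mathcal{B},j}} \ol{F_n})^2$ is false. The quadratic variation of a martingale is not dominated pointwise by the square of its terminal value: if $g = X_1 + X_2$ with $X_1, X_2$ independent symmetric signs, its martingale increments are $X_1$ and $X_2$, so the sum of their squares equals $2$ everywhere, while $g^2 = 0$ on the event $\{X_1 = -X_2\}$. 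Your suggested repair (condition on everything else and use that $\mathbb{E}[\cdot \mid \ol{\mathcal{F}_i}]$ is an averaging contraction) only bounds each individual term $\mathbb{E}_\pi(\Delta_{\omega_{\mathcal{B},j}}\ol{V_i})^2$ by $\mathbb{E}_\pi(\Delta_{\omega_{\mathcal{B},j}}\ol{F_n})^2$; it does not control the sum over $i$.

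What is true, and what the paper uses, is an identity in expectation, which is all you need since $\mathbb{E}_\pi$ already sits outside the sum over $i$ in your display. If $\mathcal{B} = \mathcal{B}_m$, then $\Delta_{\omega_{\mathcal{B}_m,j}}$ commutes with $\mathbb{E}_\pi[\cdot \mid \ol{\mathcal{F}_i}]$ for $i \ge m$ and annihilates $\mathbb{E}_\pi[\cdot \mid \ol{\mathcal{F}_i}]$ for $i < m$, so $\Delta_{\omega_{\mathcal{B}_m,j}}\ol{V_i}$ vanishes for $i < m$, equals $\mathbb{E}_\pi[\Delta_{\omega_{\mathcal{B}_m,j}}\ol{F_n} \mid \ol{\mathcal{F}_m}]$ at $i = m$, and equals the martingale increments of $\Delta_{\omega_{\mathcal{B}_m,j}}\ol{F_n}$ for $i > m$; orthogonality of these differences then gives $\sum_i \mathbb{E}_\pi \bigl(\Delta_{\omega_{\mathcal{B},j}} \ol{V_i}\bigr)^2 = \mathbb{E}_\pi \bigl(\Delta_{\omega_{\mathcal{B},j}} \ol{F_n}\bigr)^2$, and the same argument coordinatewise handles $\nabla_{U_{\mathcal{B},k}}$. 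Replacing your pointwise claim with this commutation-plus-orthogonality identity makes the proof essentially identical to the paper's. One further small omission: the tensorization step requires an integrability check ($\ol{V_i}^2 \in L^2$, i.e.\ $F_n \in L^4$, which follows from the nearly-exponential tails of $T''$, together with finiteness of $\mathbb{E}_\pi \lVert \nabla_{U_{\mathcal{B},k}} \ol{V_i}\rVert^2$), which your sketch passes over.
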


\begin{proof}
We follow the proof of Lemma~6.3 in \cite{Damron:2015}.  We will use tensorization of entropy to divide the entropy into the sums of entropies over the measures defined on each $\Omega_\mathcal{B}$.  Then, we can use logarithmic Sobolev inequalities to convert entropies into derivatives.

First, we define a filtration of $\ol{\Omega}$ by enumerating the unit boxes in $\mathbb{R}^d$ as $\mathcal{B}_1, \mathcal{B}_2, \ldots$, and defining $\ol{\mathcal{F}_i} = \sigma\big(\{\omega_{\mathcal{B}_j} : j \leq i\} \cup \{U_{\mathcal{B}_j} : j \leq i\} \big)$ to be the sigma-algebra generated by the variables associated to boxes $\mathcal{B}_1$ through $\mathcal{B}_i$.  Then, we define $V_i = \mathbb{E}_\pi[\ol{F_n} | \ol{\mathcal{F}_i}] - \mathbb{E}_\pi[\ol{F_n} | \ol{\mathcal{F}_{i - 1}}].$ One can check that $\mathbb{E}[F_n | \mathcal{F}_i](\Psi(\omega, U)) = \mathbb{E}[\ol{F_n} | \ol{\mathcal{F}_i}](\omega, U)$ for $\pi$-almost every $(\omega, U) \in \ol{\Omega}$ (and similarly for $i-1$).  As a result, $\Ent_\pi \ol{V_i}^2 = \Ent V_i^2$ for each $i$, where $\Ent_\pi$ is defined in terms of the measure $\pi$ on $\ol{\Omega}$.

Tensorization of entropy (the version we use is Theorem 2.3 from \cite{Damron:2015}) will allow us to break up the $\Ent_\pi \ol{V_i}^2$ terms into sums over the component measures on the $\Omega_{\mathcal{B}}$, but we must first show that $\ol{V_i}^2 \in L^2$ for all $i$.  Equivalently, we need to show that $V_i \in L^4$, and it will suffice to show that $F_n \in L^4$.   Here, by Jensen's inequality,
\[
\mathbb{E} F_n^4 = \mathbb{E} \left(\frac{1}{|\Gamma_n|}\sum_{z \in \Gamma_n} T''(z, z + ne_1)\right)^4 \leq \frac{1}{|\Gamma_n|} \mathbb{E} \sum_{z \in \Gamma_n} T''(z,z+ne_1)^4  = \mathbb{E}T''(0,ne_1)^4.
\]
From Lemma 3.3 of \cite{Howard:2001} reproduced as Lemma \ref{TTails} below, $T''(z, z + ne_1)$ has nearly-exponential tails, and thus all moments of $T''(z, z + ne_1)$ exist.  So, using Theorem 2.3 of \cite{Damron:2015} gives us:
\begin{equation*}\label{PartialEntBound}
\sum_i \Ent V_i^2 = \sum_i \Ent_\pi \ol{V_i}^2 \leq \sum_{i = 1}^\infty \mathbb{E}_\pi \sum_{\mathcal{B}} \left[ \sum_{j = 1}^\infty \Ent_{\pi_{\mathcal{B}, j}} \ol{V_i}^2 + \sum_{k = 1}^\infty \Ent_{\nu_{\mathcal{B}, k}} \ol{V_i}^2\right].
\end{equation*}
Now, we apply two logarithmic Sobolev inequalities.  First, we use the Bonami-Gross inequality from \cite{Bonami:1970} and \cite{Gross:1975}.  Our version is stated as Theorem \ref{logSobolevBernoulli} below, taken from Theorem 5.1 in \cite{Boucheron:2013}.  It tells us that the entropy over a Bernoulli distribution is bounded by the discrete derivative, so that:
\begin{equation} \label{BernoulliVBound}
\sum_{i = 1}^\infty \mathbb{E}_\pi \sum_{\mathcal{B}} \sum_{j = 1}^\infty \Ent_{\pi_{\mathcal{B}, j}} \ol{V_k}^2 \leq \sum_{i = 1}^\infty \sum_{\mathcal{B}} \sum_{j = 1}^\infty \mathbb{E}_\pi (\Delta_{\omega_\mathcal{B}, j} \ol{V_i})^2.
\end{equation}
For the entropies over the uniform measures, we have the theorem from section 8.14 of \cite{Lieb:2001}, written as Theorem \ref{logSobolevUniform} below.  We will show that $\mathbb{E}_\pi \lVert \nabla_{U_{\mathcal{B}, i}} \ol{V_i} \rVert^2 < \infty$ in Lemma \ref{UniformBound} below.  In order to apply the uniform log-Sobolev inequality, we can take $f = \ol{V_i}(\omega, U)$ as a function of $U_{\mathcal{B}, k}$ for some $\mathcal{B}$ and $k$, where all other elements of $\omega$ and $U$ are fixed.  Also, we can extend the definition of this $f$ so that $f$ is zero when the input $U_{\mathcal{B}, k}$ is not in $[0, 1]^d$.  Then, we can plug this $f$ into Theorem \ref{logSobolevUniform} and choose $a = \sqrt{\pi}$ to obtain:
\begin{equation} \label{UniformVBound}
\sum_{i = 1}^\infty \mathbb{E}_\pi \sum_{\mathcal{B}} \sum_{k = 1}^\infty \Ent_{\nu_{\mathcal{B}, k}} \ol{V_i}^2 \leq \sum_{i = 1}^\infty \sum_\mathcal{B} \sum_{k = 1}^\infty \mathbb{E}_\pi \lVert \nabla_{U_{\mathcal{B}, k}} \ol{V_i} \rVert^2.
\end{equation}
Now, to finish the proof, we need to rewrite the bounds in Equations \eqref{BernoulliVBound} and \eqref{UniformVBound} in terms of $\ol{F_n}$.  First, observe that for the discrete derivatives, for any box $\mathcal{B}_m$, and any $j \geq 1$,
\[
\Delta_{\omega_{\mathcal{B}_m, j}} \ol{V_i} =
\left\{
\begin{array}{ll}
\mathbb{E}_\pi[\Delta_{\omega_{\mathcal{B}_m, j}}\ol{F_n} | \ol{\mathcal{F}_i}] - \mathbb{E}_\pi [\Delta_{\omega_{\mathcal{B}_m, j}}\ol{F_n} | \ol{\mathcal{F}_{i - 1}}] & i \geq m,\\
\mathbb{E}_\pi[\Delta_{\omega_{\mathcal{B}_m, j}}\ol{F_n} | \ol{\mathcal{F}_i}] & i = m,\\
0 & i < m.
\end{array}
\right.
\]
The reason is that $\mathbb{E}_\pi[\ol{F_n} | \ol{\mathcal{F}_i}] $ is a function of the variables associated to boxes $\mathcal{B}_1$ through $\mathcal{B}_i$, and hence has zero derivative with respect to variables from boxes after $\mathcal{B}_i$.  Then, by the orthogonality of martingale differences, $\sum_{i = 1}^\infty \mathbb{E}_\pi \left( \Delta_{\omega_\mathcal{B}, j} \ol{V_i}\right)^2 = \mathbb{E}_\pi \left(\Delta_{\omega_\mathcal{B}, j} \ol{F_n} \right)^2$.

Because $\lVert \nabla_{U_\mathcal{B}, k} \ol{V_i}\rVert^2$ can be expanded as the sum of squares of partial derivatives with respect to the $d$ coordinates of $U_\mathcal{B}$, we can treat the partial derivatives in each of the $d$ coordinates individually as we did with the discrete derivatives and obtain $\sum_{i = 1}^\infty \mathbb{E}_\pi \lVert \nabla_{U_\mathcal{B}, k} \ol{V_i}\rVert^2 = \mathbb{E}_\pi \lVert \nabla_{U_\mathcal{B}, k} \ol{F_n}\rVert^2.$  Plugging these equalities into Equations \eqref{BernoulliVBound} and \eqref{UniformVBound} completes the proof.
\end{proof}

Let $\ol{T}''$ represent the pushforward of $T''$ onto the space, $\ol{\Omega}$.  Note that we can bound the derivatives of $\ol{F_n}(0, ne_1)$ by the derivatives of $\ol{T}''(0, ne_1)$, because the derivative operator is linear.  For example, for the discrete derivatives,
\begin{align*}
\mathbb{E}_\pi(\Delta \ol{F_n})^2 = \mathbb{E}_\pi \left[\Delta \left(\frac{1}{|\Gamma_n|} \sum_{z \in \Gamma_n} \ol{T}''(z, z + ne_1)\right)\right]^{2} &\leq \frac{1}{|\Gamma_n|} \sum_{z \in \Gamma_n} \mathbb{E}_\pi \left(\Delta \ol{T}''(z, z + ne_1)\right)^{2}\\
&= \mathbb{E}_\pi \left( \Delta \ol{T}''(0, ne_1) \right)^2,
\end{align*}
where the last line is true by translation invariance.  Because $\left\lVert \nabla_{U_{\mathcal{B}, k}} \ol{F_n} \right\rVert^2$ is the sum of squares of partial derivatives, the same line of arguments holds to show $\mathbb{E}_\pi \left\lVert \nabla_{U_{\mathcal{B}, k}} \ol{F_n} \right\rVert^2 \leq \mathbb{E}_\pi \left\lVert \nabla_{U_{\mathcal{B}, k}} \ol{T}'' \right\rVert^2$.  Therefore, in order to complete the proof of Lemma \ref{EntropyBoundGoal}, we need to bound the derivatives of $\ol{T}''$, as outlined in Lemmas \ref{UniformBound} and \ref{BernoulliBound} below.

\begin{lem}\label{UniformBound} There exists a constant $C > 0$ such that
\[
\sum_\mathcal{B} \sum_k \mathbb{E}_\pi \left\lVert \nabla_{U_{\mathcal{B}, k}} \ol{T}''(0, ne_1)\right\rVert^2 < Cn.
\]
\end{lem}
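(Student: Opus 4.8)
The plan is to bound, for a fixed box $\mathcal{B}$ and a fixed index $k$, the quantity $\mathbb{E}_\pi \lVert \nabla_{U_{\mathcal{B},k}} \ol{T}''(0,ne_1)\rVert^2$ by something that is summable to $Cn$. The key geometric observation is that $\ol{T}''(0,ne_1)$ depends on $U_{\mathcal{B},k}$ only through the location of the $k$th Poisson point of $\mathcal{B}$ (and only when $P_\mathcal{B} \geq k$, i.e. that point exists), so the gradient vanishes unless the $T''$-geodesic from $0$ to $ne_1$ actually uses that point. When the geodesic does use the $k$th point $s$ of $\mathcal{B}$, it enters $s$ from some previous vertex $r^-$ and leaves to some subsequent vertex $r^+$, and moving $s$ within its unit box changes $T''$ by at most the change in $\phi_n(\lVert r^- - s\rVert) + \phi_n(\lVert s - r^+\rVert)$. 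Since $\phi_n$ is differentiable with $\phi_n'(t) \leq \alpha t^{\alpha-1}$ for $t \leq h_n$ and $\phi_n'(t) = \alpha h_n^{\alpha-1}$ for $t > h_n$, we always have $\phi_n'(t) \leq \alpha \max(t,h_n)^{\alpha-1} \le \alpha(t + h_n)^{\alpha - 1}$, so each partial derivative is bounded by a constant times $(\lVert r^- - s\rVert + h_n)^{\alpha-1} + (\lVert s - r^+\rVert + h_n)^{\alpha-1}$. Hence
\[
\lVert \nabla_{U_{\mathcal{B},k}} \ol{T}''(0,ne_1)\rVert^2 \leq C\left[ (\lVert r^- - s\rVert + h_n)^{2\alpha-2} + (\lVert s - r^+\rVert + h_n)^{2\alpha-2}\right]\mathbbm{1}_{\{\mathcal{B}\text{ used at index }k\}}.
\]

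Next I would sum over $k$ and over $\mathcal{B}$. Summing over $k$ first collapses the indices: for a fixed box $\mathcal{B}$, the geodesic uses at most one point of $\mathcal{B}$ per visit, and across all visits the relevant segments $r^- - s$ and $s - r^+$ are all segments of the geodesic incident to vertices in $\mathcal{B}$; so $\sum_k \lVert \nabla_{U_{\mathcal{B},k}} \ol{T}''\rVert^2$ is bounded by $C$ times a sum of terms $(\ell + h_n)^{2\alpha-2}$ over geodesic segments $\ell$ that touch $\mathcal{B}$. Summing now over $\mathcal{B}$: each geodesic segment of length $\ell$ touches at most $C(\ell+1)^d$ unit boxes, so
\[
\sum_\mathcal{B} \sum_k \lVert \nabla_{U_{\mathcal{B},k}} \ol{T}''(0,ne_1)\rVert^2 \leq C \sum_{\text{segments }\ell\text{ of the geodesic}} (\ell+1)^d (\ell + h_n)^{2\alpha - 2}.
\]
This is a "sum over the geodesic" of a power of segment length, times a power of $h_n$; since $h_n = \max(h_0, h_1 n^{1/2\alpha}) \le C n^{1/2\alpha}$, a factor $h_n^{2\alpha-2} \le C n^{(\alpha-1)/\alpha}$ can be pulled out when $\ell \le h_n$, and for $\ell > h_n$ one keeps $\ell$ itself. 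The remaining object is exactly of the type controlled by the greedy lattice animal machinery of Section \ref{GreedyLattice} and by the nearly-exponential tail bounds on $T''$, segment lengths, and box counts (Lemmas \ref{MaxLengthBound}, \ref{TZeroBound}, \ref{BoxBound}, etc.): one bounds $\mathbb{E}\sum_{\text{segments}} g(\ell)$ by $\max_r \sum_{\text{segments of }r} g(\ell)$ over lattice-animal-like paths $r$, and these greedy maxima have expectation linear in $n$ once $g$ grows at most like a fixed power. The key point is that the sum of the $\phi_n$-lengths of geodesic segments is itself $\le T''(0,ne_1) = O(n)$ in expectation, and the extra polynomial weight $(\ell+1)^d(\ell+h_n)^{2\alpha-2-\alpha}$ (accounting for the $\ell^\alpha$ already inside $\phi_n$) is absorbed because large segments are extremely unlikely with nearly-exponential tails.

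I expect the main obstacle to be the bookkeeping for the case $\ell > h_n$, where $\phi_n$ is only linear (not $\ell^\alpha$): there, $\phi_n(\ell) \approx \alpha h_n^{\alpha-1}\ell$, but the derivative weight $\phi_n'(\ell)^2 = \alpha^2 h_n^{2\alpha-2}$ no longer pairs against an $\ell^\alpha$ growth of $\phi_n$, so one must instead use the bound on the \emph{number} of boxes touched and on $T''$ directly rather than on $\sum \ell^\alpha$. Concretely, the contribution from long segments must be controlled by writing $h_n^{2\alpha-2}(\ell+1)^d \le h_n^{2\alpha-2}(\ell+1)^d$ and then using that, with nearly-exponential tails on segment lengths from Lemma \ref{MaxLengthBound}, $\mathbb{E}\sum_{\text{segments}}(\ell+1)^d \mathbbm{1}_{\{\ell > h_n\}}$ is at most a small (even vanishing) multiple of $n$; since $h_n^{2\alpha-2} \le Cn^{(\alpha-1)/\alpha} = o(n^{1/\alpha}\cdot n)$ this still leaves a bound that is $O(n)$ — here one has to check the exponents line up, using $\alpha > 1$ so that $h_n^{2\alpha - 2}$ is a power of $n$ strictly less than $n$ and the box-count/segment-length tails beat it. Assembling these pieces, taking expectations, and invoking the relevant greedy lattice animal lemma from Section \ref{GreedyLattice} gives $\sum_\mathcal{B}\sum_k \mathbb{E}_\pi \lVert \nabla_{U_{\mathcal{B},k}}\ol{T}''(0,ne_1)\rVert^2 \le Cn$, as claimed.
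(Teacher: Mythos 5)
Your outline follows the same skeleton as the paper's proof (the gradient with respect to $U_{\mathcal{B},k}$ vanishes unless the corresponding point is used by the geodesic; when it is used, the derivative is controlled by $\phi_n'$ of the two incident segments; the resulting sum over boxes is reduced to sums along the geodesic and handled with greedy lattice animal and tail estimates), but there is a genuine gap at the very first quantitative step. You bound $\phi_n'(t)\leq \alpha\max(t,h_n)^{\alpha-1}\leq \alpha(t+h_n)^{\alpha-1}$, and consequently every segment incident to a used box carries a factor $(\ell+h_n)^{2\alpha-2}$ in the squared gradient. This discards exactly what makes the lemma true: since $\phi_n'(t)=\alpha t^{\alpha-1}$ for $t\leq h_n$ and $\phi_n'(t)=\alpha h_n^{\alpha-1}\leq \alpha t^{\alpha-1}$ for $t> h_n$, one has $\phi_n'(t)\leq \alpha t^{\alpha-1}$ in \emph{both} regimes, with no $h_n$ appearing at all; this is precisely the computation in the paper. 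With your weaker bound, each of the typically order-$n$ geodesic segments of length $O(1)$ contributes at least $c\,h_n^{2\alpha-2}\asymp n^{(\alpha-1)/\alpha}$, so the right-hand side of your final display is of order $n^{2-1/\alpha}$, which is superlinear for every $\alpha>1$. The exponent check you sketch ($h_n^{2\alpha-2}\leq Cn^{(\alpha-1)/\alpha}$, ``$=o(n^{1/\alpha}\cdot n)$'') bounds the prefactor in isolation and ignores that it multiplies an order-$n$ segment count, so it cannot rescue the argument; as written, your route proves at best $Cn^{2-1/\alpha}$, not $Cn$.

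The repair is to use $\phi_n'(\ell)\leq\alpha\ell^{\alpha-1}$ throughout; then your outline essentially becomes the paper's proof: for each box $\mathcal{B}$ the squared gradient is at most $C\bigl(L_{1,\mathcal{B}}^{\alpha-1}+C+L_{2,\mathcal{B}}^{\alpha-1}\bigr)^2$, since only the first entering and last leaving segments can be long while segments joining two points of $\mathcal{B}$ have bounded length; the sum over $k$ of the indicators $\mathbbm{1}_{\{p_\mathcal{B}\geq k\}}$ produces the multiplicity $p_\mathcal{B}$; and the resulting quantities $\sum_\mathcal{B}p_\mathcal{B}\mathbbm{1}_{\{\mathcal{B}\text{ used}\}}(0)$ and $\sum_\mathcal{B}L_\mathcal{B}^{2\alpha-2}p_\mathcal{B}\mathbbm{1}_{\{\mathcal{B}\text{ used}\}}(0)$ are bounded linearly in $n$ by Cauchy--Schwarz combined with Lemma \ref{SegmentBound}, Theorem \ref{GreedyLatticeThm} and Lemma \ref{BoxBound} --- the same tools you invoke. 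Two secondary points: the factor ``each segment of length $\ell$ touches $C(\ell+1)^d$ boxes'' is an unnecessary overcount, because only boxes containing a geodesic vertex (used boxes, in the paper's terminology) have nonzero gradient, and each segment is incident to vertices in at most two such boxes; and the claim that the geodesic uses at most one point of $\mathcal{B}$ per visit is false (consecutive geodesic points can lie in the same unit box), though your reduction to incident segments does not actually need it.
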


\begin{proof}
Throughout, we let $\ol{T}''$ represent $\ol{T}''(0, ne_1)$.  Let $p_\mathcal{B}$ be the number of Poisson points in box $\mathcal{B}$. Then, 
\begin{equation*}\label{eq: pasta}
\mathbb{E}_\pi \left\lVert \nabla_{U_{\mathcal{B}, k}} \ol{T}'' \right\rVert^2 = \mathbb{E}_\pi \left[ \left\lVert \nabla_{U_{\mathcal{B}, k}} \ol{T}'' \right\rVert^2 \mathbbm{1}_{\{p_\mathcal{B} \geq k\}}\right].
\end{equation*}
We consider how much a distance can change as the $k$th uniform random variable, $U_{\mathcal{B},k}$ changes.  Consider a portion of three consecutive points $(w_0, w_1, w_2)$ from the $T''$-geodesic from $0$ to $ne_1$.  We will analyze the change in $\ol{T}''$ as the random variable controlling the placement of $w_1$ is varied.  Because we care only about distances between the points, we will assume $w_0 = 0$ without loss of generality, making the points on the $T''$-geodesic $(0, w_1, w_2)$.  Restricting perturbations of $w_1$ temporarily to only the direction, $e_1$, consider the difference in $\ol{T}''$ as $w_1$ is perturbed by $te_1$ for a small positive or negative number, $t$:
\begin{equation} \label{GeoDistance}
\phi_n(\lVert w_1 \rVert) + \phi_n(\lVert w_1 - w_2 \rVert) - \phi_n(\lVert w_1 + te_1 \rVert) - \phi_n(\lVert w_1 + te_1 - w_2 \rVert)
\end{equation}
The expression above has two terms of the form, $\phi_n(\lVert y \rVert) - \phi_n(\lVert y + te_1 \rVert)$, for $y = w_1$ or $y = w_1 - w_2$, and we can analyze both in the same framework.  Based on the definition of $\phi_n$, we will need to break into cases depending on whether $\lVert y \rVert < h_n$ or $\lVert y \rVert > h_n$.  Note that since the $w_i$'s are Poisson points, we may assume $\|w_1\| \neq h_n$ and $\|w_2-w_1\|\neq h_n$ in our calculations. (This is also valid if the entire geodesic has only three points.)

If $t$ is sufficiently small, both $\lVert y \rVert$ and $\lVert y + te_1\rVert$ are above $h_n$, or both are below $h_n$.  For the first case, assume that $\lVert y \rVert$ and $\lVert y + te_1\rVert \leq h_n$, and write $\lVert y \rVert = \ell$.  Because $\phi_n$ is an increasing function in $t$, this difference between $\phi_n(\lVert y \rVert)$ and $\phi_n(\lVert y + te_1\rVert)$ is maximized when $0, y,$ and $te_1$ are colinear.  So, as $t$ approaches zero,
\begin{align*}
\left| \frac{\phi_n(\lVert y \rVert) - \phi_n(\lVert y + te_1\rVert)}{t} \right| = \left| \frac{\lVert y \rVert^\alpha - \lVert y + te_1 \rVert^\alpha}{t} \right| = \left| \frac{\ell^\alpha - (\ell + t)^\alpha}{t} \right| & \leq \frac{\alpha t \ell^{\alpha - 1} + O(t^2)}{t}\\
 & = \alpha \ell^{\alpha - 1} + O(t).
\end{align*}
In the other case, assume that $\lVert y \rVert$ and $\lVert y + te_1 \rVert > h_n$.  Then,
\begin{align*}
\left| \frac{\phi_n(\lVert y \rVert) - \phi_n(\lVert y + te_1\rVert)}{t} \right| &= \left| \frac{h_n^\alpha + \alpha h_n^{\alpha - 1}(\ell - h_n) - h_n^\alpha - \alpha h_n^{\alpha - 1}(\ell + t - h_n)}{t} \right|\\
&= \alpha h_n^{\alpha - 1} \leq \alpha \ell^{\alpha - 1}.
\end{align*}
Therefore, in both cases, the expression in \eqref{GeoDistance} is bounded by the following as $t$ approaches zero:
\begin{equation*} \label{DistanceBound}
\alpha t \cdot \left[\lVert w_1 \rVert^{\alpha - 1} + \lVert w_1 - w_2 \rVert^{\alpha - 1}\right] + O(t^2).
\end{equation*}

To use this to bound the derivative, let $t \in \mathbb{R}$ and, in a given Poisson configuration and given $\mathcal{B}$ and $i,k$, write $T''$ for the passage time from $0$ to $ne_1$ in this configuration, and $T''(t)$ for the passage time from $0$ to $ne_1$ in the configuration in which the uniform variable $U_{\mathcal{B},k,i}$ is replaced by $U_{\mathcal{B},k,i}+t$. Then one has
\[
T''(t) - T'' \leq T''(\mathcal{G}(t)) - T''(\mathcal{G}),
\]
where $\mathcal{G}$ is the $T''$-geodesic from $0$ to $ne_1$ in the original configuration, and $\mathcal{G}(t)$ is this same path but with the position of the Poisson point $p$ corresponding to $U_{\mathcal{B},k}$ replaced by $U_{\mathcal{B},k}+te_i$. As in the argument of the preceding paragraph, one has an upper bound
\[
T''(t) - T'' \leq T''(\mathcal{G}(t)) - T''(\mathcal{G}) \leq \alpha t \cdot \left[ \|w_1\|^{\alpha-1} + \|w_1-w_2\|^{\alpha-1}\right] + O(t^2),
\]
where $\|w_1\|$ is the length of the segment of $\mathcal{G}$ which ends at $p$ (if it exists) and $\|w_1-w_2\|$ is the length of the segment of $\mathcal{G}$ which starts at $p$ (if it exists). By the same argument, but with $\mathcal{G}$ and $\mathcal{G}(t)$ interchanged, we obtain
\[
T'' - T''(t) \leq \alpha t \cdot \left[ \|w_1(t)\|^{\alpha-1} + \|w_1(t)-w_2(t)\|^{\alpha-1}\right] + O(t^2),
\]
where $\|w_1(t)\|$ and $\|w_1(t)-w_2(t)\|$ are the analogous quantities for the geodesic $\mathcal{G}(t)$. Because a.s. all distinct paths have distinct passage times, if $t$ is sufficiently small (depending on the original configuration), the quantities $\|w_1(t)\|$ and $\|w_1\|$ are equal (and similarly for $\|w_1(t)-w_2(t)\|$ and $\|w_1-w_2\|$). Therefore for $t$ small, we obtain
\begin{equation}\label{eq: pre_derivative}
|T'' - T''(t)| \leq \alpha t \cdot \left[ \|w_1\|^{\alpha-1} + \|w_1-w_2\|^{\alpha-1}\right] + O(t^2).
\end{equation}

Now, if the $T''$-geodesic $\mathcal{G}$ uses a point in $\mathcal{B}$, then the segments in the $T''$-geodesic with at least one point in $\mathcal{B}$ can be categorized into three types: the first segment with a point in $\mathcal{B}$, the last segment with a point in $\mathcal{B}$, and any other segment.  Let $L_{1, \mathcal{B}}$ be the Euclidean length of the first segment entering $\mathcal{B}$, and let $L_{2, \mathcal{B}}$ be the Euclidean length of the last segment. Recall that the terms $\lVert w_1 \rVert$ and $\lVert w_1 - w_2 \rVert$ in \eqref{eq: pre_derivative} correspond to the lengths of the segments attached to the point with location determined by $U_{\mathcal{B}, k}$. Thus, each are equal to $L_{1, \mathcal{B}}$ or $L_{2, \mathcal{B}}$, or they are bounded by a constant because they represent segments contained inside the unit box $\mathcal{B}$.  So, \eqref{eq: pre_derivative} implies that 
\[
|T''-T''(t)| \leq \alpha t \left[ L_{1, \mathcal{B}}^{\alpha - 1} + C + L_{2, \mathcal{B}}^{\alpha - 1}\right] + O(t^2),
\]
for some $C > 0$. Therefore, for any coordinate $i$ of the $d$ coordinates of $U_{\mathcal{B}, k}$,
\[
\left| \frac{\partial}{\partial U_{\mathcal{B}, k, i}} \ol{T}''\right| \leq C [L_{1, \mathcal{B}}^{\alpha - 1} + C + L_{2, \mathcal{B}}^{\alpha - 1}].
\]
For a potentially larger $C > 0$ (depending on the dimension, $d$) and any box, $\mathcal{B}$, this implies that the left side of the lemma is bounded by:
\begin{align}
\mathbb{E}_\pi \sum_{\mathcal{B}} \sum_k \left\lVert \nabla_{U_{\mathcal{B}, k}} \ol{T}'' \right\rVert^2 \mathbbm{1}_{\{p_\mathcal{B} \geq k\}} &\leq C \mathbb{E}_\pi \sum_{\mathcal{B}} \sum_k \left[ \left( L_{1, \mathcal{B}}^{\alpha - 1} + C + L_{2, \mathcal{B}}^{\alpha - 1} \right)^2 \mathbbm{1}_{\{p_\mathcal{B} \geq k\}} \right] \mathbbm{1}_{\{\mathcal{B}~\text{used}\}}(0) \nonumber\\
&= C \mathbb{E} \sum_{\mathcal{B}} \left[ \left( L_{1, \mathcal{B}}^{\alpha - 1} + C + L_{2, \mathcal{B}}^{\alpha - 1} \right)^2 p_\mathcal{B}\right] \mathbbm{1}_{\{\mathcal{B}~\text{used}\}}(0)\nonumber \\
& \leq 3C \mathbb{E} \sum_{\mathcal{B}} \left[ \left( L_{1, \mathcal{B}}^{2\alpha - 2} + C^2 + L_{2, \mathcal{B}}^{2\alpha - 2} \right) p_{\mathcal{B}} \right]\mathbbm{1}_{\{\mathcal{B}~\text{used}\}}(0). \label{PartialBound}
\end{align}
Here, as before, $\mathbbm{1}_{\{\mathcal{B} \mbox{\scriptsize\ used}\}}(0)$ is the indicator function of the event that the $T''$-geodesic from $0$ to $ne_1$ uses a point from the box $\mathcal{B}$.  Then, we must control the size of two types of terms:
\begin{equation} \label{PoissonBoxes}
\sum_{\mathcal{B}} \mathbb{E}\left(p_{\mathcal{B}} \mathbbm{1}_{\{\mathcal{B} \mbox{\scriptsize\ used}\}}(0)\right)
\end{equation}
and
\begin{equation} \label{LBoxes}
\sum_{\mathcal{B}} \mathbb{E}\left(L_\mathcal{B}^{2\alpha - 2} p_{\mathcal{B}} \mathbbm{1}_{\{\mathcal{B} \mbox{\scriptsize\ used}\}}(0)\right),
\end{equation}
where $L_{\mathcal{B}}$ stands for either $L_{1, \mathcal{B}}$ or $L_{2, \mathcal{B}}$.  We will show that both of these terms are at most linear in $n$.

Let us focus on \eqref{PoissonBoxes} first.  Let $\#\mathcal{C} = \#\mathcal{C}(0, ne_1)$ be the number of unit boxes the $T''$-geodesic from $0$ to $ne_1$ touches (regardless of whether it uses points from the boxes) and let $\mathcal{C}$ be the set of boxes.  We will break into cases, depending on whether $\#\mathcal{C}$ is small or large.  For $C_1$ to be restricted further later, 
\begin{equation} \label{pBoundCases}
\sum_{\mathcal{B}} \mathbb{E}\left(p_{\mathcal{B}} \mathbbm{1}_{\{\mathcal{B} \mbox{\scriptsize\ used}\}}(0) \right) = \sum_{\mathcal{B}} \mathbb{E}\left(p_{\mathcal{B}} \mathbbm{1}_{\{\mathcal{B} \mbox{\scriptsize\ used}\}}(0)\mathbbm{1}_{\{\#\mathcal{C} < C_1n\}}\right) + \sum_{\mathcal{B}} \mathbb{E}\left(p_{\mathcal{B}} \mathbbm{1}_{\{\mathcal{B} \mbox{\scriptsize\ used}\}}(0)\mathbbm{1}_{\{\#\mathcal{C} \geq C_1n\}}\right).
\end{equation}
We start with the first term.  Define a lattice animal to be any connected set of unit boxes in $\mathbb{R}^d$, where a pair of boxes is connected if it shares a face.  Let $\mathcal{A}_0(n)$ be the set of lattice animals containing at most $n$ unit boxes, and containing the unit box centered at the origin. Define:
\[
N_n := \max_{A \in \mathcal{A}_0(n)} \sum_{\mathcal{B} \in A} p_\mathcal{B}.
\]
Recalling that $\mathcal{C}$ is the set of boxes touched by the $T''$-geodesic from $0$ to $ne_1$, we see that whenever $\#\mathcal{C} < C_1n$, $\mathcal{C} \in \mathcal{A}_0(C_1n)$ almost surely. (Here we are using that almost surely, the $T''$-geodesic will only cross from one box to another through a face.) Because the Poisson distribution has moments of all orders, we can use Theorem \ref{GreedyLatticeThm} below (originally from \cite{Gandolfi:1994}) to obtain
\begin{equation} \label{pBoundPart1}
\sum_{\mathcal{B}} \mathbb{E}\left(p_{\mathcal{B}} \mathbbm{1}_{\{\mathcal{B} \mbox{\scriptsize\ used}\}}(0)\mathbbm{1}_{\{\#\mathcal{C} < C_1n\}}\right) \leq \mathbb{E} N_{C_1n}  \leq C_2n,
\end{equation}
for some positive constant $C_2$ depending on $C_1$ (which will be restricted momentarily).  For the second term in \eqref{pBoundCases}, we have the following:
\begin{align}
\sum_{\mathcal{B}} \mathbb{E}\left(p_{\mathcal{B}} \mathbbm{1}_{\{\mathcal{B} \mbox{\scriptsize\ used}\}}(0)\mathbbm{1}_{\{\#\mathcal{C} \geq C_1n\}}\right) &\leq \mathbb{E} \left[\mathbbm{1}_{\{\#\mathcal{C} \geq C_1n\}} \sum_{\mathcal{B} \in \mathcal{C}}  p_\mathcal{B}\right] \label{BeginPBound} \\
&= \sum_{k = \lceil C_1n \rceil}^\infty \mathbb{E} \left[ \mathbbm{1}_{\{\#\mathcal{C} = k\}} \sum_{\mathcal{B} \in \mathcal{C}}  p_\mathcal{B} \right] \nonumber \\
&\leq \sum_{k = \lceil C_1n \rceil}^\infty \mathbb{E} \mathbbm{1}_{\{\#\mathcal{C} \geq k\}} P_k, \nonumber
\end{align}
where $P_k$ is the number of points inside  $Q_n \cap [-k - 1/2, k + 1/2]^d$.  Then, by Cauchy-Schwarz, this is at most
\[
\sum_{k = \lceil C_1n \rceil}^\infty \sqrt{\mathbb{E} P_k^2} \sqrt{\mathbb{P}(\#\mathcal{C} \geq k)}.
\]
Because our Poisson process has rate one (but some points may be removed in $Q_n$), 
\[
\sqrt{\mathbb{E} P_k^2} \leq \sqrt{(2k + 1)^d + (2k+ 1)^{2d}} \leq 2(2k + 1)^{d}.
\]
It will be convenient to lump $n$ terms in the sum together at a time.  We see that for any integer $t$,
\begin{align*}
\sum_{k = \lceil tn \rceil}^{\lfloor (t+1)n\rfloor} \sqrt{\mathbb{E} P_k^2} \sqrt{\mathbb{P}(\#\mathcal{C} \geq k)} &\leq \sum_{k = \lceil tn \rceil}^{\lfloor (t+1)n\rfloor} \sqrt{\mathbb{E} P_{(t + 1)n}^2} \sqrt{\mathbb{P}(\#\mathcal{C} \geq tn)}\\
&\leq 2n\big[2(t + 1)n+ 1\big]^d \sqrt{\mathbb{P}(\#\mathcal{C} \geq tn)}.
\end{align*}
So, we can conclude:
\begin{align}
\sum_{k = \lceil C_1n\rceil}^\infty \sqrt{\mathbb{E} P_k^2} \sqrt{\mathbb{P}(\#\mathcal{C} \geq k)} &\leq \sum_{k = \lfloor C_1 \rfloor}^\infty 2n\big[2(k + 1)n+ 1\big]^d \sqrt{\mathbb{P}(\# \mathcal{C} \geq kn)} \nonumber \\
&\leq \sum_{k = \lfloor C_1 \rfloor}^\infty 2n\big[2(k + 1)n + 1\big]^d C_3 e^{-c_4(kn)^\kappa} \nonumber \\
&\leq C_5 e^{-c_6n^\kappa}, \label{LongPoissonBound}
\end{align}
where $C_3, c_4, C_5,$ and $c_6$ are positive constants, and the second inequality is due to Lemma \ref{BoxBound} for large enough $C_1$, with $\kappa = \min(1, d/\alpha)$.  Placing this and \eqref{pBoundPart1} back in \eqref{PartialBound} bounds terms of the form in \eqref{PoissonBoxes} linearly in $n$.

Now, we turn our attention to terms of the form in $\eqref{LBoxes}$.  The procedure is roughly the same, although more complicated.  By applying the Cauchy-Schwarz inequality for sums, and then again for expectations, we have:
\begin{equation*}\label{LBoundCases}
\sum_{\mathcal{B}} \mathbb{E}(L_\mathcal{B}^{2\alpha - 2}p_{\mathcal{B}} \mathbbm{1}_{\{\mathcal{B}~\text{used}\}}(0)) 
\leq \sqrt{\mathbb{E} \sum_{\mathcal{B}} L(\mathcal{B})^{4\alpha - 4} \mathbbm{1}_{\{\mathcal{B}\text{ used}\}}(0)} \sqrt{\mathbb{E} \sum_{\mathcal{B} \in \mathcal{C}} p_{\mathcal{B}}^2}.
\end{equation*}
For the expectation that includes $L(\mathcal{B})^{4\alpha - 4}$, we can use a lattice animal argument from \cite{Howard:1999}.  We can find nearly-exponential tails for the sum of the $L(\mathcal{B})^{4 \alpha - 4}$, as summarized in Lemma \ref{SegmentBound} below, which proves that this expectation is at most linear in $n$.

 Now we turn to bounding the expected sum of the $p_{\mathcal{B}}^2$.  Our procedure here is similar to that for the bound for \eqref{pBoundCases}.  We have:
\begin{equation} \label{PSquaredCases}
\mathbb{E} \sum_{\mathcal{B} \in \mathcal{C}_z} p_{\mathcal{B}}^2 = \mathbb{E} \sum_{\mathcal{B} \in \mathcal{C}_z} p_{\mathcal{B}}^2 \mathbbm{1}_{\{\#\mathcal{C} < C_1n\}} + \mathbb{E} \sum_{\mathcal{B} \in \mathcal{C}_z} p_{\mathcal{B}}^2 \mathbbm{1}_{\{\#\mathcal{C} \geq C_1n\}}
\end{equation}
Like in Equation \eqref{pBoundPart1}, we can bound the first term in \eqref{PSquaredCases} by viewing $\mathcal{C}$ as a lattice animal.  Then, using Theorem \ref{GreedyLatticeThm} and that the Poisson distribution has finite moments of any order, we have:
\begin{equation} \label{OtherGreedyLatticeBound}
\mathbb{E} \sum_{\mathcal{B} \in \mathcal{C}_z} p_{\mathcal{B}}^2 \mathbbm{1}_{\{\#\mathcal{C} < C_1n\}} \leq Cn.
\end{equation}
for some constant, $C$.   And, for the second term in \eqref{PSquaredCases}, we follow the procedure starting at \eqref{BeginPBound}.  Let $R_k$ be the sum of the squared numbers of $Q_n$-points in each unit box contained inside $[-k - 1/2, k + 1/2]^d$. Similarly, let $\tilde R_k$ be the sum of the squared numbers of $Q$-points in each unit box contained inside $[-k - 1/2, k + 1/2]^d$.  For any unit box $\mathcal{B}$,
\[
\Var \tilde R_k = (2k + 1)^d \Var\left(p_{\mathcal{B}}^2\right) = 11(2k + 1)^d, \mbox{ and }
\]
\[
\mathbb{E} \tilde R_k = (2k + 1)^d \mathbb{E} p_{\mathcal{B}}^2 = 2(2k + 1)^d,
\]
which implies that
\[
\mathbb{E}R_k^2 \leq \mathbb{E} \tilde R_k^2 = \Var \tilde R_k + (\mathbb{E} \tilde R_k)^2 \leq 15(2k + 1)^{2d}.
\]
Replacing $P_k$ with $R_k$ in equations \eqref{BeginPBound} through \eqref{LongPoissonBound}, we can conclude that
\[
\mathbb{E} \sum_{\mathcal{B} \in \mathcal{C}} p_{\mathcal{B}}^2 \leq C n
\]
for some constant, $C$, which completes the proof that 
\[
\sum_{\mathcal{B}} \mathbb{E}(L_\mathcal{B}^{2\alpha - 2}p_\mathcal{B} \mathbbm{1}_\mathcal{B}) \leq Cn
\]
for some constant, $C$.
\end{proof}

Now, we turn to the contribution from the Bernoulli random variables.  We have:
\begin{lem} \label{BernoulliBound}
There exists a constant $C > 0$ such that
\[
\mathbb{E}_\pi \sum_{\mathcal{B}} \sum_j \mathbb{E}_\mathcal{B} \left( \Delta_{j, \mathcal{B}} \ol{T}''\right)^2 \leq Cn.
\]
\end{lem}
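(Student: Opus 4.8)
The plan is to read the discrete derivative $\Delta_{j,\mathcal B}\ol T''$ as the effect on the passage time of \emph{adding points to the single box $\mathcal B$}, and then to combine the ``empty box'' comparison already exploited in the proof of Lemma~\ref{lin} with the empty-region/greedy-lattice-animal estimate of Lemma~\ref{OffSegmentBound}. First I would unwind the Bernoulli encoding: fix a box $\mathcal B$ and condition on all randomness except the bits $(\omega_{\mathcal B,1},\omega_{\mathcal B,2},\dots)$. Toggling bit $j$ changes the encoded value $\sum_i\omega_{\mathcal B,i}2^{-i}$ by exactly $2^{-j}$, hence changes $p_{\mathcal B}=\mathcal D^{-1}\!\big(\sum_i\omega_{\mathcal B,i}2^{-i}\big)$ from some $p^-$ to some $p^+\ge p^-$ since $\mathcal D^{-1}$ is nondecreasing. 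In the Poisson picture this replaces the configuration in $\mathcal B$ by one with $p^+-p^-$ extra points at the prescribed locations $U_{\mathcal B,p^-+1},\dots,U_{\mathcal B,p^+}$; the only wrinkle is that, because $Q_n$ retains the left-most point of each $\epsilon/3^n$-box, at most $p^+-p^-$ of these additions may instead displace an existing $Q_n$-point within the same $\epsilon/3^n$-box, i.e.\ move a $Q_n$-point by at most $\epsilon/3^n$. By the continuity estimates for $T''$ (Lemma~\ref{TDifferences}, the same ones used to pass from $T''$ to $T$), the total contribution of these displacements, even after summing over all $j$ and all $\mathcal B$, carries a prefactor $\epsilon/3^n$ against only a polynomial-in-$n$ count of relevant boxes, so it is $o(n)$ and may be discarded.

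Up to that negligible error, $|\Delta_{j,\mathcal B}\ol T''|$ is at most the decrease in $T''(0,ne_1)$ caused by adding points to $\mathcal B$. Adding points to $Q_n$ only enlarges the family of admissible paths, so $T''$ can only decrease, and for any population of $\mathcal B$ the total decrease is controlled by a single quantity $D_{\mathcal B}$ of ``emptied-box'' type,
\[
D_{\mathcal B}\ =\ T''_{\mathcal B,\mathbf 0}(0,ne_1)-T''_{\mathcal B,\bullet}(0,ne_1),
\]
the range of $T''$ over all configurations in $\mathcal B$, which dominates $T''_{\mathcal B,\mathbf 0}(0,ne_1)-T''(0,ne_1)$. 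Here I would invoke Lemma~\ref{OffSegmentBound} to bound the gain from the specific added points (the principle that only a large Poisson-free region near the geodesic can produce a substantial change), which is exactly what identifies $D_{\mathcal B}$ as the correct dominating quantity and, simultaneously, supplies its nearly-exponential tails. In particular $D_{\mathcal B}=0$ unless some $T''$-geodesic between $0$ and $ne_1$ uses a point of $\mathcal B$.

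Then I would perform the sum over $j$. For fixed $\mathcal B$, the bit $j$ is relevant (that is, $\Delta_{j,\mathcal B}\ol T''\neq 0$) only if toggling it moves $\sum_i\omega_{\mathcal B,i}2^{-i}$ across a jump point $\mathcal D(k)$ of the Poisson distribution function; given $p_{\mathcal B}=\mathcal D^{-1}(U)=k$ with $U$ uniform on $(\mathcal D(k-1),\mathcal D(k)]$, the number of relevant $j$ is at most $\log_2\!\frac{1}{\mathcal D(k)-U}+\log_2\!\frac{1}{U-\mathcal D(k-1)}+O(1)$, which has finite expectation (indeed an exponential moment), uniformly in $k$. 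Since this count $N_{\mathcal B}$ depends only on the bits of $\mathcal B$ while $D_{\mathcal B}$ may be taken to depend only on the remaining randomness, the two are independent and
\[
\mathbb E_\pi\!\sum_j(\Delta_{j,\mathcal B}\ol T'')^2\ \le\ \mathbb E[N_{\mathcal B}]\cdot \mathbb E\!\big[D_{\mathcal B}^2\,\mathbbm 1_{\{\mathcal B\ \text{used}\}}\big]\ +\ (\text{displacement error}).
\]
Summing over $\mathcal B$, the first factor is $O(1)$, the displacement error is $o(n)$, and $\sum_{\mathcal B}\mathbb E[D_{\mathcal B}^2\,\mathbbm 1_{\{\mathcal B\ \text{used}\}}]\le Cn$ by the $p=2$ analogue of Lemma~\ref{TZeroBound} (the tails from Lemma~\ref{OffSegmentBound} keep the second moment comparable to the first). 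This yields $\mathbb E_\pi\sum_{\mathcal B}\sum_j(\Delta_{j,\mathcal B}\ol T'')^2\le Cn$.

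The main obstacle is precisely the ``adding points'' step feeding $D_{\mathcal B}$: unlike a removal, a single added point can in principle reroute a long stretch of the geodesic, and ruling this out requires the empty-region/greedy-lattice-animal analysis behind Lemma~\ref{OffSegmentBound}, together with upgrading its tail bound to the clean second-moment sum $\sum_{\mathcal B}\mathbb E[D_{\mathcal B}^2\mathbbm 1_{\{\mathcal B\ \text{used}\}}]=O(n)$. A secondary technical point is checking that the left-most-point displacements really contribute only $o(n)$ uniformly in $n$, and that the sum over the infinitely many Bernoulli coordinates converges — for which the Poisson-CDF geometry above (the logarithmic bound on the number of relevant bits) is exactly the tool.
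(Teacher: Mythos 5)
Your proposal is correct in outline and follows the same skeleton as the paper's proof: bound each discrete derivative by the ``emptied box versus saturated box'' range (your $D_{\mathcal B}$ is the paper's $T''_{\mathcal B,\mathbf 0}-T''_{\mathcal B,\infty}$) localized to the event that $\mathcal B$ matters, show that only an $O(1)$-expectation number of Bernoulli coordinates per box contributes, and then sum over boxes using Lemma~\ref{TZeroBound} together with the addition-of-points estimate that rests on Lemma~\ref{OffSegmentBound} (this is exactly Lemma~\ref{TInfinityBound}), plus lattice-animal bounds. Where you diverge is in the bookkeeping for the infinite bit sum and the decoupling: the paper exploits monotonicity of $T''$ in the number of points to telescope over prefixes, which produces the leading-ones variable $M_{\omega_{\mathcal B}}$, and then decouples it from the squared range by Cauchy--Schwarz (hence the fourth-moment versions, $p=4$, of Lemmas~\ref{TZeroBound} and~\ref{TInfinityBound}); you instead count the relevant bits via the geometry of the Poisson CDF (the logarithmic bound on $N_{\mathcal B}$, which indeed has finite unconditional expectation) and decouple by independence, which only needs second moments. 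Two caveats on your version. First, the independence step requires the indicator multiplying $D_{\mathcal B}^2$ to be measurable with respect to the randomness outside the bits, e.g.\ $\{D_{\mathcal B}>0\}$ or ``the $T''_{\mathcal B,\infty}$-geodesic touches $\mathcal B$,'' not ``the actual geodesic uses $\mathcal B$'' (which depends on the bit-encoded point count); with that choice you then need the corresponding variants of Lemmas~\ref{TZeroBound} and~\ref{TInfinityBound} with this modified indicator (their proofs adapt, but your citation of only ``the $p=2$ analogue of Lemma~\ref{TZeroBound}'' undersells this: the range splits into a removal part and an addition part, and the addition part is Lemma~\ref{TInfinityBound}/\ref{OffSegmentBound}, not Lemma~\ref{TZeroBound}). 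Alternatively you could keep the actual-geodesic indicator and replace independence by Cauchy--Schwarz, which is precisely what the paper does. Second, your explicit handling of the left-most-point rule in $Q_n$ (a bit flip can displace a retained point by $\epsilon/3^n$, so strict monotonicity can fail by a tiny amount) is a genuine care point that the paper's telescoping step passes over silently; your $o(n)$ dismissal is plausible but would need the continuity estimate spelled out with moment bounds on the adjacent segment lengths rather than an appeal to Lemma~\ref{TDifferences}, which concerns $T$ versus $T'$ versus $T''$ rather than perturbations of point locations.
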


\begin{proof}
To analyze the inner expectation, we think of all Poisson points outside the box $\mathcal{B}$ and the uniform variables associated to $\mathcal{B}$ as fixed. Then we think of changing the number of points in the box $\mathcal{B}$, adding or removing them at the positions specified by the uniform random variables $U_{\mathcal{B}, 1}, U_{\mathcal{B}, 2}$, and so on.

We will write $\omega_\mathcal{B} = (\omega_{\mathcal{B}, 1}, \omega_{\mathcal{B}, 2}, \ldots)$ for the Bernoulli encoding of the Poisson random variable corresponding to box $\mathcal{B}$, and we write $\varphi(\omega_{\mathcal{B}})$ for the value of the Poisson random variable.  We will write $T''(\varphi(\omega_{\mathcal{B}}))$ for $T''(0, ne_1)$, and we consider what happens when we flip bits in $\omega_{\mathcal{B}}$.  Thus, for $\sigma \in \{0, 1\}^{j - 1}$, the notation $T''(\varphi(\sigma, 1, \omega_{\mathcal{B} > j}))$ will be useful, where the first $j$ terms of $\omega_{\mathcal{B}}$ have been replaced by the binary sequence $\sigma$, and then the digit, $1$.  Let $\mathbf{1}_j$ and $\mathbf{0}_j$ represent the sequence of $j$ ones and $j$ zeroes, respectively.

As before, we write $T''$ for $T''(0, ne_1)$.  Then, let $T_{\mathcal{B}, \mathbf{0}}''$ be the value of $T''$ when all of the points from $\mathcal{B}$ are removed, and let $T_{\mathcal{B}, \infty}''$ be the value of $T''$ when the time to pass between any points in $\mathcal{B}$ is zero (corresponding to having a dense set of Poisson points in the box).  Also, let $\mathbb{E}_{\mathcal{B}^c}$ be the expected value with respect to all variables associated to boxes in the complement $\mathcal{B}^c$, and $\mathbb{E}_{U_{\mathcal{B}}}$ will be the expectation with respect to the uniform random variables inside the box, $\mathcal{B}$.  Also, $\mathbb{E}_{\omega_{\mathcal{B}} > j}$ will be the expectation with respect to the tail part of $\omega_\mathcal{B}$, $(\omega_{\mathcal{B}, j + 1}, \omega_{\mathcal{B}, j + 2}, \ldots)$.  Then, we have:
\begin{align}
\sum_{\mathcal{B}} \sum_j \mathbb{E}_\pi &\left( \Delta_{j, \mathcal{B}} \ol{T}'' \right)^2 \nonumber\\
=~&\sum_{\mathcal{B}} \mathbb{E}_{\mathcal{B}^c} \mathbb{E}_{U_{\mathcal{B}}} \sum_j \mathbb{E}_{\omega_{\mathcal{B}} > j} \frac{1}{2^{j - 1}} \sum_{\sigma \in \{0, 1\}^{j - 1}} (T''(\varphi(\sigma, 1, \omega_{\mathcal{B} > j})) - T''(\varphi(\sigma, 0, \omega_{\mathcal{B} > j})))^2 \nonumber \\
\leq~& \sum_{\mathcal{B}} \mathbb{E}_{\mathcal{B}^c} \mathbb{E}_{U_\mathcal{B}} \sum_j \mathbb{E}_{\omega_{\mathcal{B}} > j} \frac{1}{2^{j - 1}} (T''(\varphi(\mathbf{1}_j, \omega_{\mathcal{B} > j})) - T_{\mathcal{B}, \mathbf{0}}'')^2. \label{PartialPoissonBound}
\end{align}
Here, the last line follows by a telescoping argument, noting that for any sequence of nonincreasing numbers $a_i$, $(a_1 - a_0)^2 + (a_2 - a_1)^2 + \cdots + (a_n - a_{n - 1}) \leq (a_n - a_0)^2$, and that the value of $T''$ is nonincreasing as the value of the binary sequence encoded by $\sigma$ increases.  We also use that $T''(\varphi(\mathbf{1}_i, \omega_{\mathcal{B} > i})) \leq T''(\varphi(\mathbf{0}_i, \omega_{\mathcal{B} > i})) \leq T_{\mathcal{B}, \mathbf{0}}''$.

Define $D_{\mathcal{B}}$ to be one less than the minimum number of Poisson points needed for the box, $\mathcal{B}$, to be used in the $T''$-geodesic between $0$ and $ne_1$.  If the box $\mathcal{B}$ is never used regardless of the number of points in the box, define $D_{\mathcal{B}} = \infty$.  Then, we note that $\big(T''(\varphi(\mathbf{1}_j, \omega_{\mathcal{B} > j})) - T_{\mathcal{B}, \mathbf{0}}''\big)^2 = 0$ off of the event, $\left\{\varphi(\mathbf{1}_j, \omega_{\mathcal{B} > j}) > D_\mathcal{B}\right\}$.  Thus, we upper bound \eqref{PartialPoissonBound} by

\begin{equation}
\sum_{\mathcal{B}}\mathbb{E}_{\mathcal{B}^c} \mathbb{E}_{U_{\mathcal{B}}} (T''_{\mathcal{B},\infty}-T''_{\mathcal{B},\mathbf{0}})^2 \sum_j \frac{1}{2^{j-1}} \mathbb{E}_{\omega_{\mathcal{B}>j}} \mathbbm{1}_{\{\varphi(\mathbf{1}_j,\omega_{\mathcal{B}>j}) > D_{\mathcal{B}}\}}. \label{PartialSmalliBound}
\end{equation}
In the last line, we used the fact that neither $T_{\mathcal{B}, \infty}''$ nor $T_{\mathcal{B}, \mathbf{0}}''$ depends on the variables associated to the box $\mathcal{B}$.  Now, we investigate the sum over $j$.  Let $M_{\omega_\mathcal{B}} = \max\{i : \omega_{\mathcal{B}, 1} = \ldots = \omega_{\mathcal{B}, i} = 1\}$. Then, we have:
\begin{align}
\sum_j \frac{1}{2^{j - 1}} \mathbb{E}_{\omega_\mathcal{B} > j} \mathbbm{1}_{\{\varphi(\mathbf{1}_j, \omega_{\mathcal{B} > j}) > D_\mathcal{B}\}} &= 2\sum_j \mathbb{E}_{\omega_\mathcal{B} \leq j} \mathbbm{1}_{\{\omega_{\mathcal{B}, 1} = 1, \ldots, \omega_{\mathcal{B}, j} = 1\}} \cdot \mathbb{E}_{\omega_\mathcal{B} > j} \mathbbm{1}_{\{\varphi(\mathbf{1}_j, \omega_{\mathcal{B} > j}) > D_\mathcal{B}\}} \nonumber \\
&= 2\sum_j \mathbb{E}_{\omega_\mathcal{B}} \mathbbm{1}_{\{\omega_{\mathcal{B}, 1} = 1, \ldots, \omega_{\mathcal{B}, j} = 1\}} \mathbbm{1}_{\{\varphi(\mathbf{1}_j, \omega_{\mathcal{B} > j}) > D_\mathcal{B}\}} \nonumber \\
&= 2\sum_j \mathbb{E}_{\omega_\mathcal{B}}\mathbbm{1}_{\{\omega_{\mathcal{B}, 1} = 1, \ldots, \omega_{\mathcal{B}, j} = 1\}}  \mathbbm{1}_{\{\varphi(\omega_\mathcal{B}) > D_\mathcal{B}\}} \nonumber \\
&= 2 \mathbb{E}_{\omega_\mathcal{B}} \left[M_{\omega_\mathcal{B}} \mathbbm{1}_{\{\varphi(\omega_\mathcal{B}) > D_\mathcal{B}\}}\right]. \label{SmalliExpBound}
\end{align}
Recall that $\{\varphi(\omega_\mathcal{B}) > D_{\mathcal{B}}\}$ is equivalent to a point in the box $\mathcal{B}$ being used in the $T''$-geodesic, which has the corresponding indicator function $\mathbbm{1}_{\{\mathcal{B} \mbox{\scriptsize\ used}\}}(0)$.  Thus, using \eqref{SmalliExpBound} on the term in \eqref{PartialSmalliBound} yields the upper bound:
\begin{multline}
2 \sum_\mathcal{B} \mathbb{E} \big[(T_{\mathcal{B}, \infty}'' - T_{\mathcal{B}, \mathbf{0}}'')^2 M_{\omega_\mathcal{B}} \mathbbm{1}_{\{\varphi(\omega_\mathcal{B}) > D_{\mathcal{B}}\}}\big]\\
\leq 2 \sqrt{\mathbb{E} \sum_{\mathcal{B}} (T_{\mathcal{B}, \infty}'' - T_{\mathcal{B}, \mathbf{0}}'')^4 \mathbbm{1}_{\{\mathcal{B} \mbox{\scriptsize\ used}\}}(0)} \sqrt{\mathbb{E} \sum_{\mathcal{B}} M_{\omega_\mathcal{B}}^2 \mathbbm{1}_{\{\mathcal{B} \mbox{\scriptsize\ used}\}}(0)}, \label{HereComeLatticeAnimals}
\end{multline}
where the inequality holds by applying the Cauchy-Schwarz inequality twice.  We will bound each square root separately.  First, we will find linear bounds for any integer $p > 1$ for
\begin{multline*}
\mathbb{E} \sum_\mathcal{B} \left|T_{\mathcal{B}, \infty}'' - T_{\mathcal{B}, \mathbf{0}}''\right|^p \mathbbm{1}_{\{\mathcal{B} \mbox{\scriptsize\ used}\}}(0) \\
\leq 2^{p-1} \left[ \mathbb{E} \sum_\mathcal{B} \left|T_{\mathcal{B}, \infty}'' - T''(0, ne_1)\right|^p \mathbbm{1}_{\{\mathcal{B} \mbox{\scriptsize\ used}\}}(0) + \mathbb{E} \sum_\mathcal{B} \left| T''(0, ne_1) - T_{\mathcal{B}, \mathbf{0}}''\right|^p \mathbbm{1}_{\{\mathcal{B} \mbox{\scriptsize\ used}\}}(0)\right].
\end{multline*}
Lemmas \ref{TZeroBound} and \ref{TInfinityBound} tell us exactly that each expectation is at most linear in $n$.  
Thus, it remains to show a linear bound for the last term from \eqref{HereComeLatticeAnimals},
\[
\mathbb{E} \sum_{\mathcal{B}} M_{\omega_\mathcal{B}}^2 \mathbbm{1}_{\{\mathcal{B} \mbox{\scriptsize\ used}\}}(0).
\]
Let $\mathcal{C} = \mathcal{C}(0, ne_1)$ be the set of boxes the $T''$-geodesic from $0$ to $ne_1$ touches.  As in Equation \eqref{pBoundCases},
\begin{equation} \label{MCases}
\mathbb{E} \sum_{\mathcal{B}} M_{\omega_\mathcal{B}}^2 \mathbbm{1}_{\{\mathcal{B} \mbox{\scriptsize\ used}\}}(0) = \mathbb{E} \sum_{\mathcal{B}} M_{\omega_\mathcal{B}}^2 \mathbbm{1}_{\{\mathcal{B} \mbox{\scriptsize\ used}\}}(0) \mathbbm{1}_{\{\# \mathcal{C} \leq C_1 n\}}+ \mathbb{E} \sum_{\mathcal{B}} M_{\omega_\mathcal{B}}^2 \mathbbm{1}_{\{\mathcal{B} \mbox{\scriptsize\ used}\}}(0) \mathbbm{1}_{\{\# \mathcal{C} > C_1 n\}}.
\end{equation}
Since $M_{\omega_\mathcal{B}}$ has a geometric distribution with parameter $p = 1/2$, $M_{\omega_\mathcal{B}}^2$ has finite moments of all orders. As in the discussion after Equation \eqref{pBoundCases} in Lemma \ref{UniformBound}, define a lattice animal to be any face-connected set of unit boxes in $\mathbb{R}^d$.  Let $\mathcal{A}_0(n)$ be the set of lattice animals containing at most $n$ unit boxes, and containing the unit box centered at the origin.  Then, whenever $\# \mathcal{C} \leq C_1n, \mathcal{C} \in \mathcal{A}_0(C_1n)$ almost surely.  Applying Theorem \ref{GreedyLatticeThm} below (originally from \cite{Gandolfi:1994}) yields for some $C$,
\begin{equation} \label{MPartOne}
\mathbb{E} \sum_{\mathcal{B}} M_{\omega_\mathcal{B}}^2 \mathbbm{1}_{\{\mathcal{B} \mbox{\scriptsize\ used}\}}(0) \mathbbm{1}_{\{\# \mathcal{C} \leq C_1 n\}} \leq \mathbb{E} \max_{A \in \mathcal{A}_0(n)} \sum_{\mathcal{B} \in A} M_{\omega_{\mathcal{B}}}^2 \leq Cn.
\end{equation}
Write $\mathcal{C}_k$ for the collection of boxes within distance $k$ of the origin.  As in Equation \eqref{BeginPBound}, we have

\begin{align*}
\mathbb{E} \sum_{\mathcal{B}} M_{\omega_\mathcal{B}}^2 \mathbbm{1}_{\{\mathcal{B} \mbox{\scriptsize\ used}\}}(0) \mathbbm{1}_{\{\# \mathcal{C} \geq C_1 n\}} &\leq \sum_{k = \lceil C_1n \rceil}^\infty \mathbb{E} \mathbbm{1}_{\{\#\mathcal{C} \geq k\}} \sum_{\mathcal{B} \in \mathcal{C}_k} M_{\omega_\mathcal{B}}^2 \nonumber\\
&\leq \sum_{k = \lfloor C_1 \rfloor}^\infty n \mathbb{E} \mathbbm{1}_{\{\#\mathcal{C} \geq kn\}} \sum_{\mathcal{B} \in \mathcal{C}_{(k  + 1)n}} M_{\omega_\mathcal{B}}^2 \nonumber\\
&\leq \sum_{k = \lfloor C_1 \rfloor}^\infty n\sqrt{\mathbb{E} \left( \sum_{\mathcal{B} \in \mathcal{C}_{(k + 1)n}} M_{\omega_\mathcal{B}}^2\right)^2} \sqrt{\mathbb{P}(\#\mathcal{C} \geq kn)}. \label{MPartTwo}
\end{align*}
Using Jensen's inequality for summations, the expectation is bounded by
\[
\#\mathcal{C}_{(k+1)n} \sum_{\mathcal{B} \in \mathcal{C}_{(k+1)n}} \mathbb{E}M_{\omega_\mathcal{B}}^4 \leq C_2[2(k+1)n+1]^{2d},
\]
since $M_{\omega_\mathcal{B}}$ is geometric and has finite fourth moment, $C_2$.  Substituting this into the above sum and using Lemma \ref{BoxBound}, we obtain for large $C_1$,
\[
\mathbb{E} \sum_{\mathcal{B}} M_{\omega_\mathcal{B}}^2 \mathbbm{1}_{\{\mathcal{B} \mbox{\scriptsize\ used}\}}(0) \mathbbm{1}_{\{\# \mathcal{C} \geq C_1 n\}} \leq \sum_{k = \lfloor C_1 \rfloor}^\infty n[2(k+1)n + 1]^d C_3e^{-c_4(kn)^\kappa} \leq C_5e^{-c_6 n^\kappa},
\]
for constants $C_3, c_4, C_5,$ and $c_6$, where $\kappa = \min(1, d/\alpha)$. Thus, this and \eqref{MPartOne} imply that the quantity in \eqref{MCases} is at most linear in $n$.  This completes the proof.
\end{proof}

\section{Proof of Theorem \ref{MainResult}}\label{sec: MainProof}
%
%
%
%
%

Let us begin by stating our variance bound for the averaged passage time, $F$.

\begin{lem} \label{FSublinearVariance}
There exists a constant $C > 0$ such that $\Var F_n \leq Cn/\log n$ for all $n$.
\end{lem}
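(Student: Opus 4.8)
The plan is to feed the two estimates proved in Sections~\ref{MartDiff} and~\ref{Entropy} into the Falik--Samorodnitsky inequality, Theorem~\ref{FSIneq}, applied with $Z = F_n$, and then extract the stated bound by an elementary rearrangement. Before invoking Theorem~\ref{FSIneq} I would check its hypothesis $\mathbb{E} F_n^2 < \infty$: by Jensen's inequality (exactly as in the proof of Lemma~\ref{EntropyDerivativeBound}) this reduces to $\mathbb{E}\, T''(0,ne_1)^2 < \infty$, which follows from the nearly-exponential tails for $T''$ recorded in Lemma~\ref{TTails}. Then Theorem~\ref{FSIneq}, combined with Lemma~\ref{SumVBound} ($\sum_i(\mathbb{E}|V_i|)^2 \le C_1 n^{1-1/(4\alpha)}$) and Lemma~\ref{EntropyBoundGoal} ($\sum_i \Ent V_i^2 \le C_2 n$), yields, writing $V := \Var F_n$,
\[
V \,\log\!\left( \frac{V}{C_1\, n^{1-1/(4\alpha)}}\right) \;\le\; V \,\log\!\left(\frac{V}{\sum_i (\mathbb{E}|V_i|)^2}\right) \;\le\; \sum_i \Ent V_i^2 \;\le\; C_2\, n,
\]
where the first inequality uses $V \ge 0$, monotonicity of the logarithm, and $\sum_i(\mathbb{E}|V_i|)^2 \le C_1 n^{1-1/(4\alpha)}$.

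From this single inequality I would deduce $V \le Cn/\log n$ by a short contradiction argument. Fix $C := \max(C_1, 8\alpha C_2) + 1$ and suppose that, for some large $n$, one has $V > Cn/\log n$. Since $1/(4\alpha) > 0$, the ratio $n^{1/(4\alpha)}/\log n \to \infty$, so for $n$ past a fixed threshold $Cn/\log n > C_1 n^{1-1/(4\alpha)}$, and the map $v \mapsto v\log\!\big(v/(C_1 n^{1-1/(4\alpha)})\big)$ is strictly increasing on $[Cn/\log n, \infty)$. Hence
\[
C_2 n \;\ge\; V\log\!\left(\frac{V}{C_1 n^{1-1/(4\alpha)}}\right) \;\ge\; \frac{Cn}{\log n}\,\log\!\left(\frac{C}{C_1}\cdot\frac{n^{1/(4\alpha)}}{\log n}\right) \;\ge\; \frac{C}{8\alpha}\, n,
\]
the last step using that $\frac{C}{C_1}\frac{n^{1/(4\alpha)}}{\log n} \ge n^{1/(8\alpha)}$ once $n$ is large. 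This forces $C \le 8\alpha C_2$, contradicting the choice of $C$. Therefore $V \le Cn/\log n$ for all $n$ beyond a fixed $n_0$; for the remaining bounded range of $n$ one enlarges $C$, using that $\Var F_n$ is finite there (Lemma~\ref{TTails}) while $n/\log n$ stays bounded away from zero.

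The conceptual heart of the matter is that the entire improvement over the diffusive bound $\Var T \le C\|x-y\|$ comes from the \emph{polynomial} gain $n^{-1/(4\alpha)}$ in Lemma~\ref{SumVBound}: it is precisely the factor $n^{1-1/(4\alpha)}$, rather than $n$, in the denominator inside the logarithm of Theorem~\ref{FSIneq} that produces a genuine $\log n$ after rearranging — had $\sum_i(\mathbb{E}|V_i|)^2$ only been bounded by $Cn$, the inequality would give nothing. Any fixed power of $n$ strictly between $0$ and $1/2$ in the definition of $\Gamma_n$ would work here equally well. I do not expect any real obstacle in this lemma itself, since the substantive estimates are Lemmas~\ref{SumVBound} and~\ref{EntropyBoundGoal}; the only mild care needed is the monotonicity/threshold bookkeeping above and the small-$n$ range, where $\log n$ is small.
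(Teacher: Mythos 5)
Your proposal is correct and follows essentially the same route as the paper: verify $F_n \in L^2$ via Jensen and Lemma~\ref{TTails}, feed Lemmas~\ref{SumVBound} and~\ref{EntropyBoundGoal} into Theorem~\ref{FSIneq}, and rearrange, with the key point being that the polynomial gap between $n^{1-1/4\alpha}$ and $n/\log n$ makes the logarithmic factor of order $\log n$. Your contradiction/monotonicity bookkeeping is just a cosmetic variant of the paper's two-case split ($\Var F_n \lessgtr n^{1-1/8\alpha}$), so there is nothing substantive to add.
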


\begin{proof}
First, we show that $F_n \in L^2$.  We have:
\[
\mathbb{E} F_n^2 = \mathbb{E} \left( \frac{1}{|\Gamma_n|} \sum_{z \in \Gamma_n} T''(z, z + ne_1)\right)^2\leq \frac{1}{|\Gamma_n|} \mathbb{E} \sum_{z \in \Gamma_n} T''(z, z + ne_1)^2\leq \mathbb{E} T''(0, ne_1)^2,
\]
where the last line is due to translation invariance.  But, $T''$ has moments of all orders due to its nearly-exponential tails, as described in Lemma \ref{TTails}.  Thus, $F_n \in L^2$.  This allows us to use Theorem \ref{FSIneq}, the Falik-Samorodnitsky inequality, to conclude:
\begin{equation} \label{MainTheoremPart0}
\Var F_n \log \left[ \frac{\Var F_n}{\sum_i (\mathbb{E} |V_i|)^2} \right] \leq \sum_i \Ent V_i^2.
\end{equation}
From Lemma \ref{SumVBound}, we know that there is a $C > 0$ such that for all $n \geq 1$,
 \begin{equation} \label{MainTheoremPart1}
 \sum_i (\mathbb{E}|V_i|)^2 \leq C n^{1 - 1/4\alpha}.
 \end{equation}
Additionally, from Lemma \ref{EntropyBoundGoal}, we know that there is a $C > 0$ such that for all $n$,
 \begin{equation} \label{MainTheoremPart2}
 \sum_i \Ent V_i^2 \leq Cn.
 \end{equation}
If $\Var F_n \leq n^{1 - 1/8\alpha}$, then the proof is complete.  Alternatively, if $\Var F_n > n^{1 - 1/8\alpha}$, then combining this with Equation \eqref{MainTheoremPart1}, we see there exists $C > 0$ such that for all $n \geq 1$,
\[
\log \left[ \frac{\Var F_n}{\sum_i (\mathbb{E} |V_i|)^2} \right] \geq \log \left[\frac{C n^{1 - 1/8\alpha}}{n^{1 - 1/4\alpha}} \right] \geq C \log n.
\]
Combining this with Equations \eqref{MainTheoremPart0} and \eqref{MainTheoremPart2} gives the desired result,
\[
\Var F_n \leq \frac{Cn}{\log n}.
\]
\end{proof}

Now, we need to convert Lemma \ref{FSublinearVariance} into a statement about the variance of $T$, the original passage time.  First, we compare $T''$, the modified passage time, to $F_n$.

\begin{lem} \label{FtoT} There is some constant $C > 0$ such that for all $n$,
\[
|\Var F_n(0, ne_1) - \Var T''(0, ne_1)| \leq C n^{\frac{3}{4}}.
\]
\end{lem}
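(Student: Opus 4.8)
The plan is to compare the two variances through the $L^2$-distance between $F_n$ and $T''(0,ne_1)$. Writing $X=T''(0,ne_1)$, $Y=F_n(0,ne_1)$, $\bar X=X-\mathbb E X$ and $\bar Y=Y-\mathbb E Y$, the identity $\Var Y-\Var X=\mathbb E[(\bar Y-\bar X)(\bar Y+\bar X)]$ together with the Cauchy--Schwarz inequality gives
\[
|\Var F_n-\Var T''|\le\sqrt{\mathbb E(F_n-T'')^2}\,\big(\sqrt{\Var F_n}+\sqrt{\Var T''}\big).
\]
The nearly-exponential tails from Lemma~\ref{TTails} give $\Var T''(0,ne_1)\le Cn$; moreover $F_n$ is the average over $z\in\Gamma_n$ of the variables $T''(z,z+ne_1)$, each with the law of $T''(0,ne_1)$ by translation invariance, so expanding $\Var F_n=|\Gamma_n|^{-2}\sum_{z,z'}\Cov(T''(z,z+ne_1),T''(z',z'+ne_1))$ and bounding each covariance by $\Var T''(0,ne_1)$ yields $\Var F_n\le\Var T''\le Cn$. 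Hence it suffices to prove $\mathbb E(F_n-T''(0,ne_1))^2\le Cn^{1/2}$, and since $F_n-T''(0,ne_1)=|\Gamma_n|^{-1}\sum_{z\in\Gamma_n}(T''(z,z+ne_1)-T''(0,ne_1))$, Jensen's inequality reduces this to the bound $\mathbb E(T''(z,z+ne_1)-T''(0,ne_1))^2\le Cn^{1/2}$ for each $z\in\Gamma_n$, with $C$ independent of $z$ and $n$. The point of the exponent $1/4\alpha$ in the definition of $\Gamma_n$ is that then $\|z\|\le\sqrt d\,n^{1/4\alpha}$, so $\|z\|^{2\alpha}\le d^\alpha n^{1/2}$ is exactly the order of the bound we obtain.

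To estimate $T''(z,z+ne_1)-T''(0,ne_1)$, let $(0,q_1,\dots,q_m,ne_1)$ be the $T''$-geodesic from $0$ to $ne_1$; if $m=0$ then $T''(0,ne_1)=\phi_n(n)$ is attained by the direct segment and the single-segment path from $z$ to $z+ne_1$ already gives $T''(z,z+ne_1)\le\phi_n(n)=T''(0,ne_1)$, so assume $m\ge1$. Then $(q_1,\dots,q_m)$ is an admissible path for $T''(z,z+ne_1)$, so
\[
T''(z,z+ne_1)-T''(0,ne_1)\le\big(\phi_n(\|z-q_1\|)-\phi_n(\|q_1\|)\big)+\big(\phi_n(\|q_m-z-ne_1\|)-\phi_n(\|q_m-ne_1\|)\big).
\]
From its explicit formula, $\phi_n$ is $C^1$, convex and nondecreasing with $\phi_n'(t)=\alpha\min(t,h_n)^{\alpha-1}\le\alpha t^{\alpha-1}$; since $\big|\,\|z-q_1\|-\|q_1\|\,\big|\le\|z\|$, the mean value theorem bounds the first bracket by $\alpha\|z\|(\|q_1\|+\|z\|)^{\alpha-1}$, and likewise the second by $\alpha\|z\|(\|q_m-ne_1\|+\|z\|)^{\alpha-1}$. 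The symmetric surgery applied to the $T''$-geodesic from $z$ to $z+ne_1$ controls $T''(0,ne_1)-T''(z,z+ne_1)$ by the same expression built from the first and last segments of \emph{that} geodesic. Writing $\ell_1,\ell_2$ for the lengths of the first and last segments of the $T''$-geodesic from $0$ to $ne_1$ and $\ell_3,\ell_4$ for those of the $T''$-geodesic from $z$ to $z+ne_1$, we arrive at the pointwise bound
\[
\big|T''(z,z+ne_1)-T''(0,ne_1)\big|\le\alpha\|z\|\sum_{j=1}^4(\ell_j+\|z\|)^{\alpha-1}.
\]

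Squaring, using $(a+b)^p\le C_\alpha(a^p+b^p)$ with $p=2\alpha-2$, taking expectations, and using translation invariance (so each $\ell_j$ has the law of $\ell_1$ or of $\ell_2$), we obtain
\[
\mathbb E\big(T''(z,z+ne_1)-T''(0,ne_1)\big)^2\le C\|z\|^2\Big(\mathbb E[\ell_1^{2\alpha-2}]+\mathbb E[\ell_2^{2\alpha-2}]+\|z\|^{2\alpha-2}\Big).
\]
For $z\in\Gamma_n$ we have $\|z\|^2\le dn^{1/2\alpha}\le dn^{1/2}$ and $\|z\|^{2\alpha}\le d^\alpha n^{1/2}$, so the whole lemma reduces to the estimate that the first and last segments of the $T''$-geodesic from $0$ to $ne_1$ have $(2\alpha-2)$-th moments bounded uniformly in $n$ (indeed any bound growing no faster than $n^{1/2-1/2\alpha}$ would do). This is the crux: a long first or last segment forces an atypically large $Q_n$-free ball near an endpoint of the geodesic, which has nearly-exponential probability, and the needed control follows from the segment-length bounds of Lemma~\ref{MaxLengthBound} together with the Howard--Newman geodesic regularity estimates recalled in Section~\ref{Appendix}. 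Granting $\mathbb E[\ell_1^{2\alpha-2}],\mathbb E[\ell_2^{2\alpha-2}]\le C$, the last display is $\le Cn^{1/2}$, and tracing back through the reductions of the first paragraph gives $|\Var F_n-\Var T''|\le Cn^{3/4}$. I expect this moment bound on the extreme segments to be the main obstacle: the cruder argument using only the global Lipschitz constant $\alpha h_n^{\alpha-1}\sim n^{(\alpha-1)/2\alpha}$ of $\phi_n$ yields merely $|T''(z,z+ne_1)-T''(0,ne_1)|\le Cn^{(2\alpha-1)/4\alpha}$ and the weaker bound $Cn^{1-1/4\alpha}$, so the exponent $3/4$ genuinely requires knowing that geodesics take short steps near their endpoints.
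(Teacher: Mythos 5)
Your proposal is correct and follows essentially the same route as the paper: the same Cauchy--Schwarz decomposition of the variance difference, the same reduction to bounding $\lVert T''(z,z+ne_1)-T''(0,ne_1)\rVert_2$ for $z\in\Gamma_n$ via geodesic surgery, with your mean-value-theorem bound on $\phi_n$ playing the role of the paper's modified triangle inequality (Lemma~\ref{PhiIneq}). The extreme-segment moment bound you single out as the remaining obstacle is exactly what Lemma~\ref{MaxLengthBound} already supplies (the paper applies it with $p=2\alpha$, $\gamma=1/2$ to get $\lVert L_{\mbox{\scriptsize max}}^\alpha\rVert_2\le Cn^{1/4}$, while your version needs only $p=2\alpha-2$ and any $\gamma\le \tfrac12-\tfrac{1}{2\alpha}$), so no new estimate is required.
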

\begin{proof}
For any random variable $X$, let $\tilde X := X - \mathbb{E}X$.  Let $\left\lVert \cdot \right\rVert_2$ represent the $L^2$ norm, so $\left\lVert X \right\rVert_2 = \sqrt{\mathbb{E} X^2}$.  We write $\tilde T''$ for $\tilde T''(0,ne_1)$, and likewise for $\tilde F_n$.  Then, we have:
\begin{align}
\left| \Var F_n - \Var T'' \right| &= \left| \left\lVert \tilde F_n' \right\rVert_2 - \left\lVert \tilde T'' \right\rVert_2 \right| \left(\left\lVert \tilde F \right\rVert_2 + \left\lVert \tilde T'' \right\rVert_2\right) \nonumber \\
&\leq  \left\lVert \tilde F_n - \tilde T'' \right\rVert_2 \left(\left\lVert \tilde F \right\rVert_2 + \left\lVert \tilde T'' \right\rVert_2\right) \label{PartialFBound}
\end{align}
We bound the terms in \eqref{PartialFBound} individually.  First, we have from Lemma \ref{TTails} that $T''$ has nearly-exponential tails, so that for some $C > 0$, $\mathbb{E} (T'')^2 \leq C^2 n$.  Thus,
\begin{equation} \label{TBound}
\left\lVert \tilde T'' \right\rVert_2 \leq  \left\lVert T'' \right\rVert_2 \leq C\sqrt{n}, \mbox{ and }
\end{equation}
\begin{align}
\left\lVert \tilde F \right\rVert_2 &= \left\lVert \frac{1}{|\Gamma_n|} \sum_{z \in \Gamma_n} [T''(z, z + ne_1) - \mathbb{E}T''(z, z + ne_1)] \right\rVert_2 \nonumber\\
&\leq \frac{1}{|\Gamma_n|} \sum_{z \in \Gamma_n} \left\lVert T''(z, z + ne_1) - \mathbb{E} T''(z, z + ne_1) \right\rVert_2 \nonumber \\
&\leq  \left\lVert T'' \right\rVert_2 \nonumber \\
&\leq C\sqrt{n}. \label{FBound}
\end{align}
Now, we turn to bounding $\left\lVert \tilde F_n - \tilde T'' \right\rVert_2$.  Because $\mathbb{E} F_n = \mathbb{E} T''$, we have:
\begin{align}
\left\lVert \tilde F_n - \tilde T'' \right\rVert_2 = \left\lVert F_n - T'' \right\rVert_2 &= \left \lVert \frac{1}{|\Gamma_n|} \sum_{z \in \Gamma_n} [T''(z, z + ne_1) - T''(0, ne_1)] \right \rVert_2\nonumber\\
&\leq \frac{1}{|\Gamma_n|} \sum_{z \in \Gamma_n} \left\lVert T''(z, z + ne_1) - T''(0, ne_1)\right\rVert_2. \label{FtoTIneq}
\end{align}
Consider a fixed $z \in \Gamma_n$.  We will analyze the difference, $\left| T''(z, z + ne_1) - T''(0, ne_1)\right|$.  With this in mind, let $r_0, r_1, \ldots, r_{k + 1}$ be the $T''$-geodesic from $0$ to $ne_1$, and let $\tilde r_0, \tilde r_1, \ldots, \tilde r_{j + 1}$ be the $T''$-geodesic from $z$ to $z + ne_1$.  We will need to break into cases depending on whether $k$ and $j$ are zero or nonzero.
First, assume that $j$ is nonzero, so that the $T''$-geodesic from $z$ to $ne_1$ is not a single segment.  Then, we can bound $T''(0, ne_1)$ by following the geodesic from $z$ to $z + ne_1$:
\[
T''(0, ne_1) \leq \phi_n \left( \left \lVert 0 - \tilde r_1 \right \rVert \right) + T''\left(\tilde r_1, \tilde r_j\right) + \phi_n\left( \left \lVert \tilde r_j - ne_1 \right \rVert \right).
\]
We can apply the modified triangle inequality for $\phi_n$, Lemma \ref{PhiIneq}, and recognize that $T''\left( \tilde r_1, \tilde r_j \right) \leq T''(z, z + ne_1)$ to obtain:
\begin{align*}
T''(0, ne_1) \leq & 2^\alpha \big[ \phi_n \left( \left\lVert 0 - z \right \rVert \right) + \phi_n \left( \left \lVert z - \tilde r_1 \right \rVert \right) \big] + T''(z, z + ne_1)\\
&+ 2^\alpha \big[ \phi_n \left( \left \lVert \tilde r_j - (z + ne_1) \right \rVert \right) + \phi_n \left( \left \lVert z+ ne_1 - ne_1 \right \rVert \right) \big].
\end{align*}
If $j = 0$, then
\[
T''(0, ne_1) \leq \phi_n(\lVert ne_1 - 0 \rVert) = T''(z, z +  ne_1),
\]
which only improves the inequality from above.  Through a symmetrical analysis, we can get similar bounds for $T''(z, z + ne_1)$ compared to the $T''$-geodesic from $0$ to $ne_1$.  Combining the inequalities gives the bound:
\begin{align}
\left| T''(0, ne_1) - T''(z, z + ne_1) \right| \leq &\ 2^{\alpha + 2} \phi_n \left( \left \lVert z \right \rVert \right) + 2^\alpha \big[ \phi_n \left( \left \lVert z - \tilde r_1 \right \rVert \right) + \phi_n \left( \left \lVert \tilde r_j - (z + ne_1) \right \rVert \right)\big] \nonumber\\
& + 2^\alpha \big[\phi_n \left( \left \lVert 0 - r_1 \right \rVert \right) + \phi_n \left( \left \lVert r_k - ne_1 \right \rVert \right) \big]. \label{TDiffInequality}
\end{align}
(Here it is understood that the $\tilde r_i$ and $r_i$ terms are present only in the cases where $j$ or $k$ are nonzero.) Except for the $\phi_n\left(\left\lVert z \right\rVert\right)$ term, each $\phi_n$ term corresponds to the length of some segment in the $T''$-geodesic between $0$ and $ne_1$ or $z$ and $z +ne_1$.  So, let $L_{\mbox{\scriptsize max}} = \max \lVert r_i - r_{i + 1} \rVert$.  Then, from the definition of $\phi_n$, we have that $\phi_n \left( \left \lVert 0 - r_1 \right \rVert \right) \leq \phi_n(L_{\mbox{\scriptsize max}}) \leq L_{\mbox{\scriptsize max}}^\alpha$.  Therefore, using Minkowski's inequality and Equation \eqref{TDiffInequality}, we have:
\[
\left\lVert T''(0, ne_1) - T''(z, z + ne_1) \right\rVert_2 \leq 2^{\alpha + 2}  \phi_n(\|z\|) + 4 \cdot2^\alpha \left \lVert L_{\mbox{\scriptsize max}}^\alpha\right \rVert_2.
\]
Now, because $z \in \Gamma_n := \left\{z \in \mathbb{Z}^d : \lVert z \rVert_\infty \leq n^{\frac{1}{4\alpha}}\right\}$, we can leverage our previous choice of power of $n$ in this definition to obtain:
\begin{equation} \label{DiameterChoice}
\phi_n \left(\lVert z \rVert\right) \leq \phi_n \left(\sqrt{d} n^{1/4\alpha}\right) \leq d^{\alpha/2} n^{1/4}.
\end{equation}
Additionally, by Lemma \ref{MaxLengthBound} below, there exists a $C > 0$ such that:
\[
\left \lVert L_{\mbox{\scriptsize max}}^\alpha\right \rVert_2 \leq Cn^{1/4}.
\]
Thus, for some larger $C > 0$ and for any $z \in \Gamma_n$,
\[
\left\lVert T''(0, ne_1) - T''(z, z + ne_1) \right\rVert_2 \leq Cn^{1/4}.
\]
Plugging this into Equation \eqref{FtoTIneq} gives:
\[
\left\lVert \tilde F_n - \tilde T'' \right\rVert_2 \leq \frac{1}{\left| \Gamma_n \right|} \sum_{z \in \Gamma_n} Cn^{1/4} = Cn^{1/4}.
\]
Substituting this along with \eqref{TBound} and \eqref{FBound} into \eqref{PartialFBound} completes the proof.
\end{proof}

Now that we have shown that we can bound the variance of $T''$ in terms of the variance of $F$, we would also like to show that the variances of $T''$ and $T$ are close, so that we were justified in approximating $T''$ with $T$.  The following lemma will complete our proof of Theorem \ref{MainResult}:

\begin{lem} \label{TdoubletoT}
There exists a constant, $C > 0$, such that for all $n$,
\[
\Var T(0, ne_1) \leq 2 \Var T''(0, ne_1) + C.
\] 
\end{lem}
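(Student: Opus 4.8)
The plan is to reduce the statement to a uniform-in-$n$ bound on the second moment of the difference $T(0,ne_1)-T''(0,ne_1)$, and then to extract that bound from the comparison estimates between $T$ and $T''$ recorded in Section~\ref{Appendix}. Following the notation of Lemma~\ref{FtoT}, write $\tilde X = X-\mathbb{E}X$, let $\|\cdot\|_2$ be the $L^2$ norm, and abbreviate $T=T(0,ne_1)$, $T''=T''(0,ne_1)$. Decomposing $\tilde T = \widetilde{T''} + \widetilde{(T-T'')}$, the triangle inequality in $L^2$ together with the elementary bound $(a+b)^2\le 2a^2+2b^2$ gives
\[
\Var T = \|\tilde T\|_2^2 \le \Big( \|\widetilde{T''}\|_2 + \|\widetilde{(T-T'')}\|_2\Big)^2 \le 2\Var T'' + 2\Var\big(T - T''\big).
\]
Since $\Var(T-T'')\le \mathbb{E}\big[(T-T'')^2\big]$, it suffices to show that this second moment is bounded by a constant independent of $n$; the lemma then follows with $C$ twice that constant.

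To bound $\mathbb{E}[(T-T'')^2]$ I would invoke the Howard--Newman comparison of $T$ and $T''$ from \cite{Howard:2001} summarized in Section~\ref{Appendix} (in particular Lemma~\ref{TDifferences}). The mechanism, which I would recall rather than reprove, is that $T$ and $T''$ can differ appreciably only through two effects, each controlled uniformly in $n$: first, a Poisson point of $Q$ used by a geodesic may have been thinned away in passing to $Q_n$, but its replacement lies within $\sqrt d\,\epsilon/3^n$ of it, so since $\phi_n$ is Lipschitz with constant $\alpha h_n^{\alpha-1}$ the total correction is at most a polynomial-in-$n$ number of segments times $\alpha h_n^{\alpha-1}\sqrt d\,\epsilon/3^n$, which is $o(1)$ because $3^{-n}$ beats every polynomial; second, one of the geodesics could be forced to use a segment longer than $h_n$, where $t^\alpha$ and $\phi_n(t)$ disagree, but by the regularity estimates (Lemmas~\ref{E-Regions} and \ref{HN:Regions}) this requires an anomalously large Poisson-free region near the segment $[0,ne_1]$, an event whose probability has a nearly-exponential tail uniformly in $n$. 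The choices $\epsilon=1/k$ with $k$ odd and $h_n=\max(h_0,h_1n^{1/2\alpha})$ are precisely what make both contributions summable. Concretely, these estimates yield a bound of the form $\mathbb{P}\big(|T-T''|\ge \lambda\big)\le C_1 e^{-c_2\lambda^{\kappa}}$ for all $\lambda\ge C_1$, with $\kappa=\min(1,d/\alpha)$ and $C_1,c_2$ not depending on $n$.

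Given that tail, the proof finishes with a routine integration:
\[
\mathbb{E}\big[(T-T'')^2\big] = \int_0^\infty 2\lambda\,\mathbb{P}\big(|T-T''|>\lambda\big)\,d\lambda \le C_1^2 + \int_{C_1}^\infty 2\lambda\, C_1 e^{-c_2\lambda^{\kappa}}\,d\lambda,
\]
which is a finite constant independent of $n$; substituting this into the first display proves the lemma. The only step with real content is the uniform-in-$n$ tail estimate for $|T-T''|$: the two modifications defining $T''$ (restricting to $Q_n$ and replacing $t^\alpha$ by $\phi_n$) push the passage time in opposite directions, so one must argue that with overwhelming probability neither moves the optimal path by more than $O(1)$, and this is exactly where one leans on the Howard--Newman geodesic-regularity lemmas and the calibrated choices of $\epsilon$ and $h_n$ quoted in Section~\ref{Appendix}. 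Everything after that is a one-line second-moment computation.
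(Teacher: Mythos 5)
Your variance decomposition is fine and is essentially the same elementary step the paper uses ($\Var(X+Y)\le 2\Var X+2\Var Y$), so the lemma does reduce to showing $\mathbb{E}\big[(T-T'')^2\big]\le C$ uniformly in $n$. The gap is in how you claim that bound. The tail estimate you assert, $\mathbb{P}(|T-T''|\ge\lambda)\le C_1e^{-c_2\lambda^{\kappa}}$ for $\lambda\ge C_1$ uniformly in $n$, is not among the quoted results, and Lemma~\ref{TDifferences} does not say this: it gives (i) uniform tails for $|T-T'|$ (the endpoint/nearest-point discrepancy, a mechanism your sketch omits entirely since you list only the $Q_n$-thinning and the $\phi_n$-modification), and (ii) only a bound on $\mathbb{P}(T'\neq T'')\le C_1\exp(-c_2n^{1/(2\alpha)})$, i.e.\ on the probability that the thinned, $\phi_n$-modified time differs \emph{at all}, with no control on the size of the difference on that event, where it can be of order $n$ or worse. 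Consequently your justification --- a per-segment Lipschitz correction of size $\alpha h_n^{\alpha-1}\sqrt d\,\epsilon/3^n$ plus an empty-region argument via Lemmas~\ref{E-Regions} and \ref{HN:Regions} --- is not how this comparison is actually controlled (those lemmas about $\mathcal{W}_{\phi_n}$ play no role here), and it does not by itself produce a uniform-in-$n$ tail or second-moment bound for $|T-T''|$.

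The missing step is short but must be done through the intermediate time $T'$: bound $\mathbb{E}\big[(T-T')^2\big]$ by integrating the uniform tail in Lemma~\ref{TDifferences}, and bound $\mathbb{E}\big[(T'-T'')^2\big]=\mathbb{E}\big[(T'-T'')^2\mathbbm{1}_{\{T'\neq T''\}}\big]$ by Cauchy--Schwarz, playing the polynomial moment bounds from Lemma~\ref{TTails} (e.g.\ $\mathbb{E}(T')^4,\mathbb{E}(T'')^4\le Cn^4$) against $\mathbb{P}(T'\neq T'')^{1/2}\le Ce^{-c n^{1/(2\alpha)}}$, which is $o(1)$ and in particular bounded. (Equivalently, a regime split $\lambda\le Cn$ versus $\lambda\ge Cn$ does yield a uniform tail for $|T-T''|$, but with exponent $1/(2\alpha)$ rather than $\kappa$, and this derivation is precisely what your write-up skips.) This fix is essentially the paper's own proof: there one compares $\Var T''$ to $\Var T'$ by the same Cauchy--Schwarz device and then applies the two-variance inequality with $X=T'$, $Y=T-T'$, whereas you apply it with $X=T''$, $Y=T-T''$; once the second-moment bound is actually established, your version closes in the same way.
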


\begin{proof}
In order to prove this, we rely on Lemmas \ref{TDifferences} and \ref{TTails} from \cite{Howard:2001}, reproduced below.  We will write $T$ for $T(0, ne_1)$, and will do similarly for $T'$ and $T''$.  (The passage time $T'$ is defined above Lemma~\ref{TDifferences} and is the same as $T$ except that points are added at $a$ and $b$.) Then, for any integer $p > 0$,
\begin{align*}
\mathbb{E}\left[\left|(T'')^p - (T')^p\right|\right] = \mathbb{E} \left[ \left|(T'')^p - (T')^p\right| \mathbbm{1}_{T' \neq T''}\right] &\leq \sqrt{\mathbb{E} \left[ (T'')^p - (T')^p\right]^2 \mathbb{E} \mathbbm{1}_{T' \neq T''}}\\
&\leq \sqrt{2 \left[ \mathbb{E}(T'')^{2p} + \mathbb{E}(T')^{2p}\right] \mathbb{P}(T' \neq T'')}.
\end{align*}
In the first inequality, we used the Cauchy-Schwarz inequality.  Now, because of the nearly-exponential tails of $T'$ and $T''$ from Lemma \ref{TTails}, we have that $\mathbb{E}(T')^{2p} \leq C n^{2p}$ and $\mathbb{E}(T'')^{2p} \leq C n^{2p}$ for some constant, $C$.  Additionally, $\mathbb{P}(T' \neq T'')$ decays nearly exponentially in $n$ according to Lemma \ref{TDifferences}.  Combining these two yields that for some $C_1, c_2,$ and $c_3 > 0,$
\[
\mathbb{E}\left[\left|(T'')^p - (T')^p\right|\right] \leq C_1 \exp\left(-c_2 n^{c_3}\right).
\]
Using this expression for $p = 1$ and $p = 2$ along with the exponential tails for $T'$ and $T''$ yields:
\begin{align} 
\left|\Var T'' - \Var T'\right| &= \left| \left[\mathbb{E}(T'')^2 - \mathbb{E}(T')^2\right] - \left[ \left(\mathbb{E} T''\right)^2 - \left(\mathbb{E} T'\right)^2\right]\right|\nonumber \\
&\leq \mathbb{E}\left[\left|(T'')^2 - (T')^2\right|\right] + \mathbb{E}\left|T'' - T'\right| \cdot \left(\mathbb{E}T'' + \mathbb{E}T'\right)\nonumber \\
& \leq C_1 \exp\left(-c_2 n^{c_3}\right),\label{T''toT'}
\end{align}
for possibly bigger $C_1$, and smaller $c_2$ and $c_3$.  Now, we must compare the variance of $T'$ to the variance of $T$.  Note that for general random variables $X$ and $Y$, 
\[
\Var (X+Y) \leq 2\Var X + 2 \Var Y.
\]
Thus, choosing $X = T'$ and $Y = T - T'$ yields for some $C > 0$,
\begin{equation} \label{T'toT}
\Var T \leq 2 \Var T' + 2\Var(T - T') \leq 2 \Var T' + C.
\end{equation}
This last inequality follows from the comparison between $T$ and $T'$ in Lemma \ref{TDifferences}.  Combining Equations \eqref{T''toT'} and \eqref{T'toT} and perhaps changing the constant yields the desired result.
\end{proof}

\section{Bounds on boxes, numbers of Poisson points, and lengths} \label{GreedyLattice}

Throughout the paper, we have needed results to bound the total number of boxes a geodesic passes through, or the total number of points in boxes the geodesic uses.  The proofs of these results will use lattice animals.  Let us begin with the total number of boxes that a $T''$-geodesic touches.

Recall that $\mathbb{R}^d$ was broken into unit boxes, ordered $\mathcal{B}_1, \mathcal{B}_2, \ldots$. Let $\mathcal{C}(x, y)$ be the set of boxes touched by the $T''$-geodesic from $x$ to $y$.  Then, let $\# \mathcal{C}(x, y)$ be the number of boxes in $\mathcal{C}(x, y)$.  We will need to know information on $\mathcal{C}(0, ne_1)$, and in addition, on the number of boxes any possible $T''$-geodesic within the averaged regions defined by $\Gamma_n$ could touch.  The following lemma applies to both cases: in the lemma when $\gamma = 1$, we get bounds on $\#\mathcal{C}(0, ne_1)$, and when $\gamma = 1/4$, we get bounds on the number of boxes within an averaged region.

  \begin{lem} \label{BoxBound}
 Let $\gamma \in (0,1]$ and $C_3 > 0$ be constants, and let $\kappa = \min(1, d/\alpha)$.  Let $\mathcal{B}(x)$ be the unit box containing the point $x$. Then, there exist positive constants $C_1$ and $C_2$ depending on $\gamma$ and $C_3$ such that for all $n \geq 1$, for any $x$ and $y$ with $\lVert x - y \rVert \leq C_3 n^\gamma$, and for all $\ell \geq C_1 {n^\gamma},$
  \[
  \mathbb{P} \left( \max_{v \in \mathcal{B}(x), w \in \mathcal{B}(y)} \# \mathcal{C}(v, w) \geq \ell \right) \leq C_1 \exp\left(-C_2 \ell^\kappa\right).
  \]
  \end{lem}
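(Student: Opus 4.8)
The plan is to reduce the event $\{\sup_{v\in\mathcal{B}(x),\,w\in\mathcal{B}(y)}\#\mathcal{C}(v,w)\ge\ell\}$ to two events, each with nearly-exponential tails at scale $n^\gamma$: one on which some passage time $T''(v,w)$ is atypically large, and one on which a $\ast$-connected lattice animal rooted at $\mathcal{B}(x)$ is atypically ``defective.'' This follows the greedy lattice animal strategy of Howard--Newman behind their Equation~(3.10). By translation invariance of the Poisson process we may assume $\mathcal{B}(x)$ is the unit box at the origin.

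The first ingredient is a purely geometric bound. Fix $v\in\mathcal{B}(x)$, $w\in\mathcal{B}(y)$ and write the $T''$-geodesic from $v$ to $w$ as $v=r_0,r_1,\dots,r_N,r_{N+1}=w$ with $r_1,\dots,r_N\in Q_n$. A line segment of Euclidean length $t$ meets at most $d(t+3)$ unit boxes, and, since $h_n\ge1$ and $\alpha>1$, one checks $\phi_n(t)\ge t$ for all $t\ge1$; hence the geodesic's segments of length $\ge1$ contribute at most $T''(v,w)$ to its total Euclidean length while the remaining segments number at most $N+1$, so
\[
\#\mathcal{C}(v,w)\ \le\ C_d\big(T''(v,w)+N+1\big),
\]
with $N=N(v,w)$ the number of $Q_n$-points it uses. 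The passage time is controlled uniformly in $v,w$ by the modified triangle inequality for $\phi_n$ (Lemma~\ref{PhiIneq}): arguing as in \eqref{Version2}, $T''(v,w)\le C(d^{\alpha/2}+T''(\bar x,\bar y))$ for the box centers $\bar x,\bar y$, and since $\|\bar x-\bar y\|\le C_3n^\gamma+\sqrt{d}\le C_3'n^\gamma$, Lemma~\ref{TprimeTails} gives $\mathbb{P}(T''(\bar x,\bar y)\ge s)\le C\exp(-cs^\kappa)$ for $s\ge Cn^\gamma$. Taking $C_1$ large so that $\ell/3C_d$ exceeds the thresholds above, it remains to prove $\mathbb{P}(\sup_{v,w}N(v,w)\ge m)\le C\exp(-cm^\kappa)$ for $m\ge cn^\gamma$.

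For the point count I would run a greedy lattice animal argument adapting Howard--Newman's Equation~(3.10) to the truncated metric $\phi_n$. Call a unit box \emph{bad} if it, or one of its $3^d-1$ neighbors, is empty of $Q_n$-points, contains more than $K_0$ of them, or contains two $Q_n$-points at distance less than $\delta_0$; for $K_0$ large and $\delta_0$ small the bad boxes form a finite-range field of small density, so by Theorem~\ref{GreedyLatticeThm} the probability that some $\ast$-connected animal $A\ni\mathcal{B}(x)$ of size $j$ has a fixed fraction of its boxes bad is at most $\rho^{j}$ with $\rho<1$. The deterministic input---the content that must be lifted from Howard--Newman---is that minimality, through the strict superadditivity $\phi_n(a)+\phi_n(b)<\phi_n(a+b)$ for $\alpha>1$ (whose gap is a fixed fraction of $\phi_n(a+b)$ when $a,b$ are comparable), lets the geodesic profitably reroute through an available $Q_n$-point of any good box through whose interior it would otherwise pass, so that every geodesic segment lying among good boxes has length in $[\delta_0,C]$; feeding in the a priori bound $T''(v,w)\le C_\ast n^\gamma$ (Lemma~\ref{TprimeTails} again) this shows the geodesic can touch, and use, at most $O(n^\gamma)$ boxes beyond those in or adjacent to bad boxes, so for $\ell\ge C_1n^\gamma$ with $C_1$ large the event $\{\#\mathcal{C}(v,w)\ge\ell\}$ forces the animal $\mathcal{C}(v,w)\ni\mathcal{B}(x)$ to be defective; summing $\rho^j$ over $j\ge\ell$ gives $C\rho^\ell\le C\exp(-c\ell^\kappa)$, and the same analysis controls $N(v,w)\le\sum_{\mathcal{B}\in\mathcal{C}(v,w)}p_{\mathcal{B}}$. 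The step I expect to be the main obstacle is making this argument non-circular: a purely local good/bad analysis fails, since a geodesic could a priori spiral through a large all-good region, so one must combine the global bound $T''(v,w)=O(n^\gamma)$ with a careful version of the Howard--Newman ``no wasteful segments through good regions'' dichotomy, reworked for $\phi_n$ and for endpoints at distance $n^\gamma$ rather than $n$; phrasing everything via animals rooted at $\mathcal{B}(x)$ is what makes the estimate simultaneously uniform over all $v\in\mathcal{B}(x)$, $w\in\mathcal{B}(y)$.
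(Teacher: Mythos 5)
Your opening reduction $\#\mathcal{C}(v,w)\le C_d\bigl(T''(v,w)+N+1\bigr)$, and the transfer of passage-time tails from arbitrary $v\in\mathcal{B}(x)$, $w\in\mathcal{B}(y)$ to the deterministic box centers via Lemma \ref{PhiIneq} and Lemma \ref{TprimeTails}, are fine and mirror steps the paper also performs. The gap is in the remaining piece, the tails of the point count $N(v,w)$, where you have deferred exactly the hard part. First, your ``bad box'' field is not a low-density field: a unit box is empty of $Q_n$-points if and only if it is empty of $Q$-points, which has probability $e^{-1}$, so with your neighborhood clause the probability that a given unit box is bad is bounded away from zero (in fact large) no matter how you choose $K_0$ and $\delta_0$. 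Hence the claimed bound $\rho^{j}$ with $\rho<1$ for animals containing a fixed fraction of bad boxes fails in your setup; and in any case Theorem \ref{GreedyLatticeThm} is a law of large numbers for greedy lattice animals and does not yield exponential tail bounds --- the paper proves the corresponding estimate directly by a Chernoff bound exploiting $2$-dependence (partitioning the boxes into $3^d$ classes) together with the $7^{dm}$ count of lattice animals, and this only works because its notion of ``good'' (at most $C_4$ Poisson points in the box and its neighbors) has genuinely small complement probability for $C_4$ large; where emptiness is the relevant event (Lemma \ref{OffSegmentBound}), the paper must pass to $\lambda$-boxes with $\lambda$ large to make it subcritical. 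Second, the deterministic rerouting dichotomy you invoke --- that minimality forces every geodesic segment lying among good boxes to have length at least $\delta_0$, so that $N\lesssim\delta_0^{-\alpha}T''$ --- is unproved, is flagged by you as the main obstacle, and cannot be lifted from Howard--Newman's proof of (3.10), which argues in the opposite direction (long segments force empty lens regions $\mathcal{W}_{\phi_n}$). It is also dubious as stated: a $Q_n$-point of a box merely touched by a segment need not lie in the lens $\mathcal{W}_{\phi_n}$ of that segment (the lens pinches near the endpoints), so minimality neither forces the geodesic to use points of the good boxes it touches nor prevents it from using arbitrarily short segments.

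The paper's proof avoids both problems by never bounding $N$ at all. It takes the maximizing pair $(\mathring{x},\mathring{y})$ and splits according to whether at least a fraction $c_5$ of the touched boxes are good in the \emph{sparse} sense above. If fewer are good, the touched set is a lattice animal rooted at $\mathcal{B}(x)$ with an atypically large fraction of high-occupancy boxes, which has probability at most $Ce^{-c\ell}$ by the Chernoff/animal-count argument. If at least $c_5\ell$ good boxes are touched, then each such box (away from the endpoints, and after restricting to one of the $3^d$ classes so the crossings are disjoint) forces the geodesic to cross an annulus of width $1$ using at most $C_4$ points, at cost at least $C_4^{1-\alpha}$, whence $T''(\mathring{x},\mathring{y})\ge c\ell$; transferring to the centers via Lemma \ref{PhiIneq} and applying Lemma \ref{TprimeTails} gives the $\exp(-c\ell^{\kappa})$ bound. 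This observation --- that sparse boxes are expensive to cross --- is the key idea missing from your proposal; with it, the point count $N$ and the rerouting claim become unnecessary.
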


\begin{proof}
Let $\mathring{x} \in \mathcal{B}(x)$ and $\mathring{y} \in \mathcal{B}(y)$ be any pair of points where $\#\mathcal{C}(v, w)$ attains its maximum, so that
\[
\# \mathcal{C}(\mathring{x}, \mathring{y}) =  \max_{v \in \mathcal{B}(x), w \in \mathcal{B}(y)} \# \mathcal{C}(v, w).
\]
To bound $\# \mathcal{C}(\mathring{x}, \mathring{y})$, we will break into cases: either the $T''$-geodesic from $\mathring{x}$ to $\mathring{y}$ passes through a high proportion of boxes with many points, or the $T''$-geodesic has a large passage time.  We will show that the probability of each of these cases is small.  With this in mind, let $\mathcal{G}$ be the set of ``good'' unit boxes $\mathcal{B}$ where the total number of Poisson points in $\mathcal{B}$ and any boxes touching $\mathcal{B}$ (including at corners) is at most $C_4$ for some large constant $C_4$. Then, for any constant $c_5$ (to be made small later),
\begin{multline} \label{BoxCases}
\mathbb{P}\left(\# \mathcal{C}(\mathring{x}, \mathring{y}) \geq \ell \right) =  \mathbb{P}\bigg(\# \mathcal{C}(\mathring{x}, \mathring{y}) \geq \ell \mbox{ and } \#\big[\mathcal{C}(\mathring{x}, \mathring{y}) \cap \mathcal{G}\big] < c_5 \# \mathcal{C}(\mathring{x}, \mathring{y}) \bigg)
\\ + \mathbb{P}\bigg(\# \mathcal{C}(\mathring{x}, \mathring{y}) \geq \ell \mbox{ and } \#\big[\mathcal{C}(\mathring{x}, \mathring{y}) \cap \mathcal{G}\big] \geq c_5 \#\mathcal{C}(\mathring{x}, \mathring{y}) \bigg).
\end{multline}
We bound each term separately.  For the first term on the right side of \eqref{BoxCases}, we will use a lattice animal argument. Call two boxes connected if they share a face. We will show that it is unlikely for any connected set of boxes $\mathcal{C}$ of cardinality at least $\ell \geq C_1 {n^\gamma}$ to have fewer than $c_5\# \mathcal{C}(\mathring{x}, \mathring{y})$ good boxes.  Let $\mathfrak{M}_m$ be the collection of all connected sets of boxes containing $\mathring{x}$ with $m$ boxes total.  Then,
\begin{align} 
\mathbb{P}\bigg(\# \mathcal{C}(\mathring{x}, \mathring{y}) \geq \ell \mbox{ and } \#\big[\mathcal{C}(\mathring{x}, \mathring{y}) \cap \mathcal{G}\big] < c_5 \# \mathcal{C}(\mathring{x}, \mathring{y}) \bigg)
&\leq \sum_{m = \lceil \ell \rceil}^\infty \mathbb{P}\left(\min_{\mathcal{C} \in \mathfrak{M}_m} \#(\mathcal{C} \cap \mathcal{G}) < c_5 m\right) \nonumber \\
&\leq \sum_{m = \lceil \ell \rceil}^\infty \sum_{\mathcal{C} \in \mathfrak{M}_m} \mathbb{P}\left(\#(\mathcal{C} \cap \mathcal{G}) < c_5 m\right) \nonumber \\
&= \sum_{m = \lceil \ell \rceil}^\infty \sum_{\mathcal{C} \in \mathfrak{M}_m} \mathbb{P}\left( \sum_{\mathcal{B} \in \mathcal{C}} X(\mathcal{B}) < c_5 m\right), \label{BoxToBernoulli}
\end{align}
where $X(\mathcal{B})$ is the indicator of the event that $\mathcal{B} \in \mathcal{G}$.  Note that the probability $p = p(C_4)$ of $X(\mathcal{B})$ being $1$ depends on the constant $C_4$.  To bound the probabilities in this summation, we use that the $X(\mathcal{B})$'s are 2-dependent.  To use this independence, consider partitioning the unit boxes in $\mathbb{R}^d$ into $3^d$ groups of boxes, $P_1, P_2, \ldots, P_{3^d}$, so that for any two boxes in the same $P_i$, the annuli of boxes surrounding them are disjoint.  Then, let $P_{\mathcal{C}}$ be the set from the partition that contains the most boxes from $\mathcal{C}$, so that by the pigeonhole principle, $\#(P_{\mathcal{C}} \cap \mathcal{C})$ is at least $\left \lceil \frac{\#\mathcal{C}}{3^d}\right \rceil = \left \lceil \frac{m}{3^d} \right \rceil$.  Choosing any $k := \left \lceil \frac{m}{3^d}\right \rceil$ boxes in $\#(P_{\mathcal{C}} \cap \mathcal{C})$ and writing them as $\mathcal{B}_{j_1}, \mathcal{B}_{j_2}, \ldots, \mathcal{B}_{j_k}$, we have for any $\lambda > 0$:
\begin{align}
\mathbb{P}\left( \sum_{\mathcal{B} \in \mathcal{C}} X(\mathcal{B}) < c_5 m\right) \leq \mathbb{P}\left(\sum_{i = 1}^{k} X(\mathcal{B}_{j_i}) < c_5m\right)
&= \mathbb{P} \left(e^{-\lambda \sum_{i = 1}^k X(\mathcal{B}_{j_i})} > e^{-\lambda c_5m}\right) \nonumber\\
&\leq e^{\lambda c_5 m} \mathbb{E} e^{-\lambda \sum_{i = 1}^k X(\mathcal{B}_{j_i})} \nonumber\\
&= e^{\lambda c_5m} \left(\mathbb{E}e^{-\lambda X(\mathcal{B}_{j_1})}\right)^k. \label{ExpGF}
\end{align}
Since $X(\mathcal{B}_{j_1})$ is a Bernoulli random variable with parameter $p$, $\mathbb{E}e^{-\lambda X(\mathcal{B}_{j_1})} = 1 - p + pe^{-\lambda}$, which is always less than $1$.  Picking up at \eqref{ExpGF}, and recalling that $k = \left \lceil \frac{m}{3^d}\right \rceil$, we have
\begin{align*}
e^{\lambda c_5m} \left(\mathbb{E}e^{-\lambda X(\mathcal{B}_{j_1})}\right)^k &\leq e^{\lambda c_5m} \left(1 - p + pe^{-\lambda}\right)^{\frac{m}{3^d}}\nonumber \\
&= e^{m\left(\lambda c_5 + \frac{1}{3^d} \ln (1 - p + pe^{-\lambda})\right)}
\end{align*}
It is well-known that the number of lattice animals of size $m$ grows exponentially in $m$: from Equation (4.24) of \cite{Grimmett:1999}, we have that $|\mathfrak{M}_m| \leq 7^{dm}$ for all $m$. Thus, we have the following:
\begin{equation*}
\sum_{\mathcal{C} \in \mathfrak{M}_m} \mathbb{P}\left(\sum_{\mathcal{B} \in \mathcal{C}} X(\mathcal{B}) < c_5 m\right) \leq e^{m\left(\lambda c_5 + \frac{1}{3^d}\ln(1-p+pe^{-\lambda}) + d\ln 7\right)}.\label{BoxesExpBound}
\end{equation*}
By choosing $c_5$ small, $C_4$ large (so that $p$ is close to 1) and $\lambda$ large, the exponent can be made negative. Specifically, fix $c_5 \leq \frac{1}{4\cdot 3^d}$, $\lambda \geq (2d\ln 7)4\cdot 3^d$, and then $C_4$ to be so large that $\ln(1-p+pe^{-\lambda}) < -\lambda/2$.  Therefore, for some positive constants $C_6$ and $c_7$,
\[
\mathbb{P}\left(\min_{\mathcal{C} \in \mathfrak{M}_m} \#(\mathcal{C} \cap \mathcal{G}) < c_5 m\right) \leq C_6e^{-c_7m}.
\]
We use this fact in the top line of \eqref{BoxToBernoulli} to conclude for some positive constant $C_8$,
\begin{align}
\mathbb{P}\bigg(\# \mathcal{C}(\mathring{x}, \mathring{y}) \geq \ell \mbox{ and } \#\big[\mathcal{C}(\mathring{x}, \mathring{y}) \cap \mathcal{G}\big] < c_5 \# \mathcal{C}(\mathring{x}, \mathring{y}) \bigg) &\leq \sum_{m = \lceil \ell \rceil}^\infty C_6e^{-c_7m}\nonumber\\
&\leq C_8 e^{-c_7 \ell}. \label{BBound1}
\end{align}

This finishes our bound for the first term on the right side of \eqref{BoxCases}.  For the second term, we will show that when a $T''$-geodesic passes through many good boxes, it must be long, and this is unlikely.  Consider any good box $\mathcal{B} \in \mathcal{G}$ that is not touching or equal to the boxes containing $\mathring{x}$ or $\mathring{y}$, and is touched by the $T''$-geodesic.  In order to touch $\mathcal{B}$, the $T''$-geodesic from $\mathring{x}$ to $\mathring{y}$ must pass through an annulus around $\mathcal{B}$ that contains at most $C_4$ points.  This means that the $T''$-geodesic travelled Euclidean distance at least $1$ in at most $C_4$ steps. For such a box $\mathcal{B}$, define $b_{\mathcal{B}}$ to be the first Poisson point the $T''$-geodesic uses from $\mathcal{B}$ (if it exists). Then, define $a_{\mathcal{B}}$ to be the first Poisson point the $T''$-geodesic uses prior to $a_{\mathcal{B}}$ that is not in the annulus of unit boxes surrounding $\mathcal{B}$ (if it exists). Then, the smallest contribution to $T''(a_\mathcal{B}, b_{\mathcal{B}})$ comes when the points are equally-spaced, and we have that
\[
T''(a_{\mathcal{B}}, b_{\mathcal{B}}) \geq C_4^{1 - \alpha}.
\]
Note that even when there are no existing points $a_{\mathcal{B}}$ and $b_{\mathcal{B}}$ that are used by the $T''$-geodesic, $C_4^{1-\alpha}$ still gives a lower bound for the length of the portion of the $T''$-geodesic passing through the annulus around $\mathcal{B}$.  Now, we find a lower bound for the length of the $T''$-geodesic based on all of the good boxes it passes through.  To guarantee that the segments between $a_{\mathcal{B}}$ and $b_{\mathcal{B}}$ are disjoint (except possibly at endpoints), we again partition the unit boxes in $\mathbb{R}^d$ into $3^d$ different groups $P_1, P_2, \ldots, P_{3^d}$, so that any two boxes in the same $P_i$ are surrounded by annuli of boxes that are disjoint.  If $\# \mathcal{C}(\mathring{x}, \mathring{y}) \geq \ell$ and $\#(\mathcal{C}(\mathring{x}, \mathring{y}) \cap \mathcal{G}) \geq c_5 \# \mathcal{C}(\mathring{x}, \mathring{y}) \geq c_5\ell$, then by the pigeonhole principle at least one part $P_{\mathcal{C}}$ in the partition has at least $c_5\ell/3^d$ boxes in it.  Thus, except for perhaps boxes adjacent to $\mathring{x}$ or $\mathring{y}$, each of the good boxes in this partition corresponds to a segment of the $T''$-geodesic of length at least $C_4^{1 - \alpha}$, and we can conclude:
\begin{equation} \label{T-lowerbound}
T''(\mathring{x}, \mathring{y}) \geq \left(c_5\ell/3^d - 2 \cdot 3^d \right)C_4^{1 - \alpha} \geq c_9 \ell
\end{equation}
for some $c_9 > 0$ and $C_1$ sufficiently large, because $\ell \geq C_1 n^\gamma \geq C_1$.

Let $\tilde x$ and $\tilde y$ be the (deterministic) centers of the boxes containing $\mathring{x}$ and $\mathring{y}$.  Then, let $r_0 = \tilde x, r_1, \ldots, r_k, r_{k + 1} = \tilde y$ be the $T''$-geodesic between $\tilde x$ and $\tilde y$.  Assuming $k > 0$, we can use the modified triangle inequality for $\phi_n$ in Lemma \ref{PhiIneq} to obtain:
\begin{align*}
T''\left(\mathring{x}, \mathring{y}\right) &\leq \phi_n \left( \left \lVert \mathring{x} - r_1 \right \rVert \right) + T''(r_1, r_k) + \phi_n \left( \left\lVert r_k - \mathring{y} \right \rVert \right)\\
&\leq 2^\alpha \left[ \phi_n \left( \left \lVert \mathring{x} - \tilde x \right \rVert \right) + \phi_n \left( \left \lVert \tilde x - r_1 \right \rVert \right) \right] + T''(r_1, r_k)\\
&\ \ + 2^\alpha \left[ \phi_n \left( \left \lVert \mathring{y} - \tilde y \right \rVert \right) + \phi_n \left( \left \lVert r_k - \tilde y \right \rVert \right)\right]\\
&\leq 2^\alpha T''(\tilde x, \tilde y) + 2^\alpha \left[ \phi_n \left( \left\lVert \mathring{x} - \tilde x \right \rVert \right) + \phi_n \left( \left \lVert \mathring{y} - \tilde y \right\rVert \right)\right].
\end{align*}
The last two $\phi_n$ terms are both bounded by a constant, because $\mathring{x}$ and $\tilde x$ are within the same unit box, and $\mathring{y}$ and $\tilde y$ are also within the same unit box.  On the other hand, if $k = 0$, then
\begin{align*}
T''(\mathring{x}, \mathring{y}) &\leq 2^{2\alpha} \left[ \phi_n\left( \left \lVert \mathring{x} - \tilde x \right \rVert \right) + \phi_n \left( \left \lVert \tilde x - \tilde y \right \rVert \right) + \phi_n \left( \left \lVert \tilde y - \mathring{y} \right \rVert \right) \right]\\
&= 2^{2\alpha}T''(\tilde x, \tilde y) + 2^{2\alpha}\left[ \phi_n \left( \left\lVert \mathring{x} - \tilde x \right \rVert \right) + \phi_n \left( \left \lVert \mathring{y} - \tilde y \right\rVert \right)\right],
\end{align*}
which is similar to the previous bound.  Thus, Equation \eqref{T-lowerbound} implies:
\begin{align*}
T''(\tilde x, \tilde y) &\geq 2^{-2\alpha} c_9 \ell -  C_{10}\\
&\geq  c_{11} \ell
\end{align*}
for sufficiently large $C_1$ and for some $C_{10}, c_{11} > 0$, because $\ell \geq C_1 n^\gamma$ and $n \geq 1$.  So, we have shown that if $\# \mathcal{C}(\mathring{x}, \mathring{y}) \geq \ell$ and $\#(\mathcal{C}(\mathring{x}, \mathring{y}) \cap \mathcal{G}) \geq c_5 \# \mathcal{C}(\mathring{x}, \mathring{y})$, then $T''(\tilde x, \tilde y) \geq c_{11} \ell$ for some constant, $c_{11}$.  Thus, by making $C_1$ sufficiently large, we can use Lemma \ref{TprimeTails} to conclude that for some $C_{12}$ and $c_{13} > 0$ and with $\kappa = \min(1, d/\alpha)$,
\begin{align}
 \mathbb{P}\bigg(\# \mathcal{C}(\mathring{x}, \mathring{y}) \geq \ell \mbox{ and } \#\big[\mathcal{C}(\mathring{x}, \mathring{y}) \cap \mathcal{G}\big] \geq c_5 \#\mathcal{C}(\mathring{x}, \mathring{y}) \bigg) &\leq \mathbb{P}(T''(\tilde x, \tilde y) \geq c_{11} \ell) \nonumber\\
 &\leq C_{12} \exp\left(-c_{13} \ell^\kappa\right). \label{BBound2}
\end{align}
Plugging \eqref{BBound1} and \eqref{BBound2} into \eqref{BoxCases} completes the proof.
\end{proof}

Now, we find a bound on the maximum length of a segment on a $T''$-geodesic.
\begin{lem} \label{MaxLengthBound}
Define $L_{\mbox{\scriptsize max}} = \max_i \lVert r_i - r_{i + 1} \rVert$ for consecutive points $r_i$ and $r_{i + 1}$ in the $T''$-geodesic between $0$ and $ne_1$.  Then, for any $p > 0$ and any $\gamma > 0$, there exists a constant $C_{\gamma, p}$ such that for all $n \geq 1$,
\[
\mathbb{E} L_{\mbox{\scriptsize max}}^p \leq C_{\gamma, p} n^\gamma.
\]
\end{lem}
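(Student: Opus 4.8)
The plan is to control the upper tail $\mathbb{P}(L_{\mbox{\scriptsize max}}\ge \ell)$ and then integrate $\mathbb{E}L_{\mbox{\scriptsize max}}^p=\int_0^\infty p\ell^{p-1}\mathbb{P}(L_{\mbox{\scriptsize max}}\ge\ell)\,d\ell$. The engine is a deterministic \emph{shortcut lemma}: there is a constant $\rho=\rho(\alpha)\in(0,\tfrac12)$ so that whenever the $T''$-geodesic from $0$ to $ne_1$ uses a segment $[q,q']$ of Euclidean length $\ell$, the ball $B(m,\rho\min(\ell,h_n))$ centered at the midpoint $m=(q+q')/2$ contains no point of $Q_n$. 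Indeed, if $q''\in Q_n$ lies within $\rho\min(\ell,h_n)$ of $m$, then $\bigl|\,\|q-q''\|-\tfrac{\ell}{2}\,\bigr|$ and $\bigl|\,\|q''-q'\|-\tfrac{\ell}{2}\,\bigr|$ are at most $\rho\min(\ell,h_n)$, so using that $\phi_n$ is Lipschitz with constant $\le \alpha\min(\ell,h_n)^{\alpha-1}$ on the relevant interval around $\tfrac{\ell}{2}$,
\[
\phi_n(\|q-q''\|)+\phi_n(\|q''-q'\|)\le 2\phi_n(\tfrac{\ell}{2})+2\alpha\min(\ell,h_n)^{\alpha-1}\cdot\rho\min(\ell,h_n).
\]
A short case split on whether $\ell\le h_n$, $h_n<\ell\le 2h_n$, or $\ell>2h_n$ gives $\phi_n(\ell)-2\phi_n(\tfrac{\ell}{2})\ge c_0(\alpha)\min(\ell,h_n)^\alpha$ (using strict convexity of $t\mapsto t^\alpha$ in the first regime and the explicit slope $\alpha h_n^{\alpha-1}$ and intercept $h_n^\alpha(1-\alpha)$ of the linear piece of $\phi_n$ in the others), so with $\rho:=c_0(\alpha)/(2\alpha)$ the right side is strictly below $\phi_n(\ell)$; inserting $q''$ between $q$ and $q'$ then strictly lowers the $T''$-passage time, contradicting that the path is a geodesic. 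Finally, a point of $Q$ in a ball forces a point of $Q_n$ within $\sqrt d\,\epsilon/3^n$ of it, so the same holds with $Q_n$ replaced by $Q$, the radius halved, and $n$ large — equivalently for $\ell\ge\ell_1$ with $\ell_1=\ell_1(\alpha,d)$ a constant (taking $h_0$ large if needed).

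Next I would localize the geodesic. By Lemma~\ref{BoxBound} with $\gamma=1$ there are constants $M,C_1,c_2>0$, with $M$ as large as desired, so that $\mathbb{P}(\#\mathcal{C}(0,ne_1)\ge Mn)\le C_1 e^{-c_2 n^\kappa}$; on the complement $A_n$ the geodesic touches fewer than $Mn$ unit boxes, so all of its vertices — hence all segment midpoints — lie in a cube $R_n$ of side $O(n)$ centered at the origin, and $L_{\mbox{\scriptsize max}}=O(n)$. Combining this with the shortcut lemma and a union bound over the $O(n^d)$ integer points of $R_n$ — each the center of a Euclidean ball whose emptiness of $Q$ has probability $e^{-\omega_d(\mathrm{radius})^d}$ — yields, for a constant $c_3>0$ and all $\ell_1\le\ell\le CMn$,
\[
\mathbb{P}(L_{\mbox{\scriptsize max}}\ge\ell)\le C_1 e^{-c_2 n^\kappa}+Cn^d\,e^{-c_3\min(\ell,h_n)^d}.
\]
For $\ell>CMn$ the event $\{L_{\mbox{\scriptsize max}}\ge\ell\}$ forces $\neg A_n$, and I would use the trivial inclusion $\{L_{\mbox{\scriptsize max}}\ge\ell\}\subseteq\{T''(0,ne_1)\ge\phi_n(\ell)\}$ together with $\phi_n(\ell)\ge\tfrac{\alpha}{2}h_0^{\alpha-1}\ell\ge C_1 n$ (valid for $M$ large, since then $\ell\ge 2h_n$) and the nearly-exponential tails of $T''(0,ne_1)$ from Lemma~\ref{TTails} to get $\mathbb{P}(L_{\mbox{\scriptsize max}}\ge\ell)\le C_1\exp(-c_4\ell^\kappa)$.

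Finally I would assemble the integral. Fix $\gamma,p>0$ and split at $\ell_0:=\max(\ell_1,n^{\gamma/2p})$: the part $\ell\le\ell_0$ contributes at most $\ell_0^p\le Cn^{\gamma/2}$. On $\ell_0\le\ell\le CMn$ the term $Cn^d e^{-c_3\min(\ell,h_n)^d}$ integrates against $p\ell^{p-1}$ to $o(1)$ — if $\min=\ell$ use $\ell^d\ge\ell\ge n^{\gamma/2p}$, and if $\min=h_n\ge h_1 n^{1/2\alpha}$ use that $e^{-c_3 h_n^d}$ beats every power of $n$ — while the term $C_1 e^{-c_2 n^\kappa}$ contributes at most $(CMn)^p C_1 e^{-c_2 n^\kappa}=o(1)$; on $\ell>CMn$ the bound $C_1\exp(-c_4\ell^\kappa)$ makes $\int p\ell^{p-1}C_1 e^{-c_4\ell^\kappa}\,d\ell$ at most a constant. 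Hence $\mathbb{E}L_{\mbox{\scriptsize max}}^p\le Cn^{\gamma/2}+O(1)\le C_{\gamma,p}n^\gamma$ for $n$ large, and for the finitely many remaining small $n$ one notes $\phi_n(L_{\mbox{\scriptsize max}})\le T''(0,ne_1)$, so $L_{\mbox{\scriptsize max}}$ inherits all finite moments from $T''(0,ne_1)$ via Lemma~\ref{TTails}. The main obstacle is the shortcut lemma, specifically obtaining an empty-ball radius proportional to $\min(\ell,h_n)$ uniformly in the position and orientation of the segment — which requires the careful treatment of $\phi_n$ near its kink and the passage from $Q_n$ to $Q$; the rest is routine union bounds and integration.
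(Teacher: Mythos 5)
Your proposal is correct, and while it shares the paper's overall skeleton (a long geodesic segment forces a large deterministic region empty of $Q_n$; localize the geodesic to a cube of side $O(n)$ via Lemma~\ref{BoxBound}; union bound over deterministic centers; treat the far tail separately; integrate), it substitutes genuinely different arguments at the two key steps. First, the paper obtains its empty region by invoking convexity of $\mathcal{W}_{\phi_n}$ together with Lemma~\ref{W-dimensions}, which yields a box of side length proportional to $\sqrt{\ell}$ inside $\mathcal{W}_{\phi_n}$ of a segment of length $\ell$; you instead prove a self-contained ``shortcut lemma'' directly from the piecewise structure of $\phi_n$ (Lipschitz constant $\alpha\min(\ell,h_n)^{\alpha-1}$ near $\ell/2$ versus the gap $\phi_n(\ell)-2\phi_n(\ell/2)\geq c_0\min(\ell,h_n)^{\alpha}$, which your three-case check does establish), giving an empty ball of radius $\rho\min(\ell,h_n)$ at the midpoint. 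This is an equally valid quantitative lower bound on $\mathcal{W}_{\phi_n}$ — larger than the paper's box when $\ell\leq h_n$, smaller transversally when $\ell\gg h_n^2$, but in either regime the emptiness probability $e^{-c\min(\ell,h_n)^d}$ beats the $O(n^d)$ union bound because $h_n\gtrsim n^{1/2\alpha}$, so the integration goes through exactly as you describe. Second, for the regime where the geodesic touches many boxes (equivalently $\ell\gtrsim n$), the paper bounds $L_{\mbox{\scriptsize max}}\leq\sqrt{d}\,\#\mathcal{C}$ and uses Cauchy--Schwarz with the tails of Lemma~\ref{BoxBound}, whereas you use the cleaner inclusion $\{L_{\mbox{\scriptsize max}}\geq\ell\}\subseteq\{T''(0,ne_1)\geq\phi_n(\ell)\}$ with the linear lower bound on $\phi_n$ and Lemma~\ref{TTails}; both work, and yours avoids a Cauchy--Schwarz step. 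Two cosmetic points: with $\rho=c_0/(2\alpha)$ your chain of inequalities is only non-strict, so take $\rho$ strictly smaller (e.g.\ $c_0/(4\alpha)$) to get the strict improvement that contradicts geodesicity; and the discretization of the random midpoint to a nearby integer center should be stated (shrink the radius by $\sqrt{d}$, which is harmless once $\ell\geq\ell_1$ and $h_0$ is large), but this is the same maneuver the paper performs when passing from its $\mathcal{W}$-box to a grid of deterministic boxes, so neither point is a genuine gap.
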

\begin{proof}
Let $\mathcal{C} = \mathcal{C}(0, ne_1)$ be the set of boxes touched by the $T''$-geodesic from $0$ to $ne_1$, and let $\#\mathcal{C} = \# \mathcal{C}(0, ne_1)$ be the number of boxes in this set.  We have:
\begin{equation} \label{MaxLengthCases}
\mathbb{E} L_{\mbox{\scriptsize max}}^p = \mathbb{E} L_{\mbox{\scriptsize max}}^p \mathbbm{1}_{\{\# \mathcal{C} < Cn\}} + \sum_{k = Cn}^\infty \mathbb{E} L_{\mbox{\scriptsize max}}^p \mathbbm{1}_{\{\# \mathcal{C} = k\}}.
\end{equation}

Let us bound the first term.  Suppose that $\#\mathcal{C}(0, ne_1) \leq Cn$, but $L_{\mbox{\scriptsize \color{black}max}}^p > tn^\gamma$ for some $t \geq C_0$, with $C_0 > 0$ a large constant restricted above Equation \eqref{A-Bound} below.  We will show this implies there is a large region with no Poisson points, which is unlikely.  As in \cite{Howard:2001}, define:
\[
\mathcal{W}_{\phi_n}(x, y) = \{a: \phi_n(\lVert x - a\rVert) + \phi_n(\lVert a - y\rVert) \leq \phi_n(\lVert x - y\rVert)\}.
\]
In words, $\mathcal{W}_{\phi_n}(x, y)$ is the set of points that shorten the direct path between $x$ and $y$.  Lemma 5.1 of \cite{Howard:2001} tells us that $\mathcal{W}_{\phi_n}(a, b)$ is closed and convex.  Combining this convexity with Lemma \ref{W-dimensions} below yields that there is a small constant $c_1 > 0$ such that for all $x$ and $y$ with $\lVert x - y \rVert \geq 1$, $\mathcal{W}_{\phi_n}(x, y)$ contains a $d$-dimensional box (with sides parallel to the axes) with side length $c_1 \sqrt{\lVert x - y \rVert}$.

So, if $L_{\mbox{\scriptsize max}}^p > tn^\gamma$, let $r$ and $s$ be endpoints of a segment with $\lVert r - s \rVert \geq t^{1/p}n^{\gamma/p}$.  Then, $\mathcal{W}_{\phi_n}(r, s)$ contains a box of side length $c_1t^{1/2p}n^{\gamma/2p}$. Consider dividing $\mathbb{R}^d$ into boxes of side length $\frac{c_1}{2} t^{1/2p} n^{\gamma/2p}$, centered at points of $\left(\frac{c_1}{2} t^{1/2p} n^{\gamma/2p} \mathbb{Z}\right)^d$.  If $L_{\mbox{\scriptsize \color{black}max}}^p > tn^\gamma$, one of these deterministic boxes must contain no Poisson points from $Q_n$.  Moreover, if $\# \mathcal{C} \leq Cn$, then the $T''$-geodesic from $0$ to $ne_1$ must be contained in the region $[-Cn, Cn]^d$.  Therefore there is a $\frac{c_1}{2} t^{1/2p} n^{\gamma/2p}$-box that overlaps with $[-Cn, Cn]^d$ and does not intersect $Q_n$.

With this in mind, let $E_n$ be the event that there is a $\frac{c_1}{2} t^{1/2p} n^{\gamma/2p}$-box overlapping $[-Cn, Cn]^d$ that contains no Poisson points in $Q_n$.  Note that there are at most $(2Cn)^d/\left(\frac{c_1}{2} t^{1/2p} n^{\gamma/2p}\right)^d$ boxes of side length $\frac{c_1}{2} t^{1/2p} n^{\gamma/2p}$ contained completely within $[-Cn, Cn]^d$, meaning that in total we can bound the number of such boxes intersecting $[-Cn, Cn]^d$ by $C_2 n^{d(1 - \gamma/2p)}$ for some $C_2 > 0$ (since $t \geq C_0$). Now, let $A_1$ be the event that the $\frac{c_1}{2} t^{1/2p} n^{\gamma/2p}$-box centered at the origin contains no points from $Q_n$.  A unit box has no points from $Q_n$ if and only if it has no points from $Q$.  So, let $A_2$ be the event that the $\frac{c_1}{4} t^{1/2p} n^{\gamma/2p}$-box centered at the origin has no points from $Q$.  Assuming that $C_0$ is large enough in the restriction $t \geq C_0$, the $\frac{c_1}{4} t^{1/2p} n^{\gamma/2p}$-box is contained within the union of the unit boxes contained entirely within the $\frac{c_1}{2} t^{1/2p} n^{\gamma/2p}$-box, so that $\mathbb{P}(A_1) \leq \mathbb{P}(A_2)$. Because our Poisson process has rate $1$, we have that $\mathbb{P}(A_2) = \exp\left(-c_3 t^{d/2p} n^{\gamma d/2p}\right)$ for some $c_3 > 0$, so
\begin{equation}\label{A-Bound}
\mathbb{P}(A_1) \leq \exp\left(-c_3 t^{d/2p} n^{\gamma d/2p}\right).
\end{equation}
Then, because the Poisson points in each box are independent, we have:
\begin{align}
\mathbb{P}(L_{\mbox{\scriptsize max}}^p \geq tn^\gamma \mbox{ and } \# \mathcal{C} \leq n) &\leq \mathbb{P}(E_n) \leq 1 - (1 - \mathbb{P}(A_1))^{C_2 n^{d(1 - \gamma/2p))}}. \label{LmaxCaseOne}
\end{align}
Next, we use the inequalities $\ln(1-x)\geq -2x$ for small $x \geq 0$ and $e^{-x} \geq 1-x$ for all $x$ to get
 for all $t \geq C_0$ and for sufficiently large $n$,
\begin{align*}
(1 - \mathbb{P}(A_1)^{C_2 n^{d(1 - \gamma/2p))}} &\geq \exp\big[ \ln(1 - \exp(-c_3t^{d/2p} n^{d \gamma/2p})) \cdot C_2 n^{d(1 - \gamma/2p)} \big]\\
&\geq \exp\big[ - 2 \exp(-c_3t^{d/2p} n^{d \gamma/2p}) C_2 n^{d(1 - \gamma/2p)}\big]\\
&\geq 1 - 2\exp(-c_3t^{d/2p} n^{d \gamma/2p}) C_2 n^{d(1 - \gamma/2p)}.
\end{align*}
Substituting this into equation \eqref{LmaxCaseOne} yields for all $t \geq C_0$ and $n$,
\begin{align*}
\mathbb{P}(L_{\mbox{\scriptsize max}}^p \geq tn^\gamma \mbox{ and } \# \mathcal{C} \leq Cn) &\leq 2 \exp(-c_3t^{d/2p} n^{d \gamma/2p}) C_2 n^{d(1 - \gamma/2p)}\\
&\leq C_4\exp(-c_3t^{d/2p} n^{d\gamma/4p}),
\end{align*}
where $C_4 > 0$ is some large constant.  We can use this bound to conclude:
\begin{align}
\mathbb{E} L_{\mbox{\scriptsize max}}^p \mathbbm{1}_{\{\# \mathcal{C} < Cn\}} &= n^\gamma \int_0^\infty \mathbb{P}\left( \frac{L_{\text{max}}^p}{n^\gamma} \geq t, \#\mathcal{C} < Cn\right)~\text{d}t \nonumber \\
&\leq  C_0 n^\gamma + n^\gamma \int_{C_0}^\infty \mathbb{P}\left( L_{\text{max}}^p \geq t n^\gamma, \#\mathcal{C} < Cn\right)~\text{d}t \nonumber \\
&\leq \frac{C_\gamma}{2} n^\gamma, \label{Part1LMax}
\end{align}
for all $n$, and for $C_\gamma$ large (to be restricted further momentarily).  Now, we turn to the second part of equation \eqref{MaxLengthCases}.  We note that if $\# \mathcal{C} = k$, then the maximum segment is of length at most $k\sqrt{d}$.  Using the Cauchy-Schwarz inequality and then Lemma \ref{BoxBound} (assuming $C$ is sufficiently large) with $\kappa = \min(1, d/\alpha)$, we have:
\begin{align}
\sum_{k = Cn}^\infty \mathbb{E} L_{\mbox{\scriptsize max}}^p \mathbbm{1}_{\{\# \mathcal{C} = k\}} &\leq \sum_{k = Cn}^\infty \sqrt{\mathbb{E}  L_{\mbox{\scriptsize max}}^{2p} \mathbbm{1}_{\{\# \mathcal{C} = k\}}} \sqrt{\mathbb{P} (\# \mathcal{C} = k)} \nonumber \\
&\leq \sum_{k = Cn}^\infty C_5 \left(k\sqrt{d}\right)^p  \exp\left(-c_6k^\kappa\right) \nonumber \\
&\leq \frac{C_\gamma}{2} n^\gamma \label{Part2LMax}
\end{align}
for all $n$, for constants $C_5$ and $c_6$, and for $C_\gamma$ sufficiently large.  Plugging equations \eqref{Part1LMax} and \eqref{Part2LMax} into equation \eqref{MaxLengthCases} finishes the proof.

\end{proof}

There are several times in the proofs above where we need to bound the difference between $T''$ in the original environment and $T''$ in an environment in which points have been added or removed from $Q_n$.  Here, $T''_{\mathcal{B}, \mathbf{0}}(0, ne_1)$ represents the passage time of the $T''$-geodesic from $0$ to $n e_1$ when all points from $\mathcal{B}$ have been removed, and $T''_{\mathcal{B}, \infty}(0, ne_1)$ when it takes no time to pass between points in $\mathcal{B}$ (corresponding to a dense set of Poisson points in $\mathcal{B}$).  Recall that $\mathbbm{1}_{\{\mathcal{B} \mbox{\scriptsize\ used}\}}(0)$ is the indicator of the event that a point from $\mathcal{B}$ is used in the $T''$-geodesic from $0$ to $ne_1$.

\begin{lem} \label{TZeroBound}
For any integer $p \geq 1$, there is a $C > 0$ such that for all $n \geq 1$,
\[
\mathbb{E} \sum_{\mathcal{B}} \left[ T''_{\mathcal{B}, \mathbf{0}}(0, ne_1) - T''(0, n e_1) \right]^p \mathbbm{1}_{\{\mathcal{B} \mbox{\scriptsize\ used}\}}(0) < C n.
\]
\end{lem}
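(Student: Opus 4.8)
The plan is to bound, box by box, the increase in passage time caused by deleting the Poisson points of a box $\mathcal{B}$ that is used by the $T''$-geodesic $\mathcal{G}$ from $0$ to $ne_1$, by exhibiting an explicit competitor path for $T''_{\mathcal{B},\mathbf{0}}(0,ne_1)$, and then to sum the resulting local bounds. Fix such a box $\mathcal{B}$, and --- as in the proof of Lemma~\ref{MaxV} --- let $s_{\mathcal{B}}^-$ be the first point of $\mathcal{G}$ in $\mathcal{B}$, $r_{\mathcal{B}}^-$ its predecessor on $\mathcal{G}$, and symmetrically $s_{\mathcal{B}}^+,r_{\mathcal{B}}^+$ for the last point of $\mathcal{G}$ in $\mathcal{B}$, with the conventions for boxes containing $0$ or $ne_1$ adopted there. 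Since $s_{\mathcal{B}}^-$ is the \emph{first} point of $\mathcal{G}$ in $\mathcal{B}$, all vertices of $\mathcal{G}$ up to $r_{\mathcal{B}}^-$ lie outside $\mathcal{B}$, and likewise all vertices from $r_{\mathcal{B}}^+$ on; hence following $\mathcal{G}$ to $r_{\mathcal{B}}^-$, jumping directly to $r_{\mathcal{B}}^+$, and following $\mathcal{G}$ to $ne_1$ is an admissible path for $T''_{\mathcal{B},\mathbf{0}}(0,ne_1)$. Comparing its $T''$-length with $T''(0,ne_1)$ and keeping only the first and last of the $\phi_n$-terms in the portion of $\mathcal{G}$ from $r_{\mathcal{B}}^-$ to $r_{\mathcal{B}}^+$ gives
\[
T''_{\mathcal{B},\mathbf{0}}(0,ne_1) - T''(0,ne_1) \le \phi_n\big(\|r_{\mathcal{B}}^- - r_{\mathcal{B}}^+\|\big) - \phi_n\big(\|r_{\mathcal{B}}^- - s_{\mathcal{B}}^-\|\big) - \phi_n\big(\|s_{\mathcal{B}}^+ - r_{\mathcal{B}}^+\|\big).
\]
Applying the modified triangle inequality for $\phi_n$ (Lemma~\ref{PhiIneq}) to $\phi_n(\|r_{\mathcal{B}}^- - r_{\mathcal{B}}^+\|)$ through the intermediate points $s_{\mathcal{B}}^-,s_{\mathcal{B}}^+$, and using $\phi_n(\|s_{\mathcal{B}}^- - s_{\mathcal{B}}^+\|)\le\phi_n(\sqrt d)\le d^{\alpha/2}$ since $s_{\mathcal{B}}^\pm$ lie in one unit box, this yields
\begin{equation}\label{UsedPhiIneq}
\big(T''_{\mathcal{B},\mathbf{0}}(0,ne_1) - T''(0,ne_1)\big)\,\mathbbm{1}_{\{\mathcal{B}\mbox{\scriptsize\ used}\}}(0) \le C\big[\phi_n(L_{1,\mathcal{B}}) + \phi_n(L_{2,\mathcal{B}}) + 1\big]\,\mathbbm{1}_{\{\mathcal{B}\mbox{\scriptsize\ used}\}}(0),
\end{equation}
where $L_{1,\mathcal{B}} = \|r_{\mathcal{B}}^- - s_{\mathcal{B}}^-\|$ and $L_{2,\mathcal{B}} = \|s_{\mathcal{B}}^+ - r_{\mathcal{B}}^+\|$ are the Euclidean lengths of honest segments of $\mathcal{G}$ (equal to $0$ when the box contains $0$ or $ne_1$ and the relevant endpoint degenerates); this is the estimate referred to as \eqref{UsedPhiIneq} in the proof of Lemma~\ref{MaxV}.

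Raising \eqref{UsedPhiIneq} to the $p$-th power with $(a+b+c)^p\le 3^{p-1}(a^p+b^p+c^p)$, summing over all boxes, and using that $T''_{\mathcal{B},\mathbf{0}}=T''$ whenever $\mathcal{B}$ is unused, it suffices to show that
\[
\mathbb{E}\sum_{\mathcal{B}}\mathbbm{1}_{\{\mathcal{B}\mbox{\scriptsize\ used}\}}(0)\qquad\text{and}\qquad \mathbb{E}\sum_{\mathcal{B}}\phi_n(L_{i,\mathcal{B}})^p\,\mathbbm{1}_{\{\mathcal{B}\mbox{\scriptsize\ used}\}}(0)\quad(i=1,2)
\]
are each $O(n)$. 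The first equals $\mathbb{E}\,\#\{\mathcal{B}:\mathcal{B}\text{ used by }\mathcal{G}\}\le\mathbb{E}\,\#\mathcal{C}(0,ne_1)$, which is $O(n)$: Lemma~\ref{BoxBound} with $\gamma=1$ gives $\mathbb{P}(\#\mathcal{C}(0,ne_1)\ge\ell)\le C_1 e^{-C_2\ell^\kappa}$ for $\ell\ge C_1 n$, so the expectation is at most $C_1 n + o(1)$. For the remaining sums, note first that $\phi_n(t)\le t^\alpha$ for all $t\ge 0$ (the linear part of $\phi_n$ is a tangent line to the convex function $t\mapsto t^\alpha$), so $\phi_n(L_{i,\mathcal{B}})^p\le L_{i,\mathcal{B}}^{\alpha p}$. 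These sums of powers of the first and last segment lengths over used boxes are exactly the quantities controlled, in the style of \cite{Howard:1999}, by Lemma~\ref{SegmentBound} --- which is stated there for the exponent $4\alpha-4$ but whose greedy-lattice-animal proof goes through verbatim for an arbitrary fixed exponent $\alpha p$ --- giving $\mathbb{E}\sum_{\mathcal{B}}L_{i,\mathcal{B}}^{\alpha p}\mathbbm{1}_{\{\mathcal{B}\mbox{\scriptsize\ used}\}}(0)\le Cn$. In the case $p=1$ needed for Lemma~\ref{lin} one can avoid this: the segments $r_{\mathcal{B}}^-\to s_{\mathcal{B}}^-$ of distinct used boxes are distinct segments of $\mathcal{G}$, since their endpoints $s_{\mathcal{B}}^-$ lie in disjoint boxes, so $\sum_{\mathcal{B}}\phi_n(L_{1,\mathcal{B}})\mathbbm{1}_{\{\mathcal{B}\mbox{\scriptsize\ used}\}}(0)\le\sum_{e\in\mathcal{G}}\phi_n(\|e\|)=T''(0,ne_1)$, and likewise for $L_{2,\mathcal{B}}$, while $\mathbb{E}\,T''(0,ne_1)\le Cn$ by Lemma~\ref{TTails}. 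Combining the three linear bounds completes the proof.

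I expect the main obstacle to be the first paragraph: carefully checking that the ``obvious'' detour around $\mathcal{B}$ really is an admissible $\phi_n$-path (handling the conventions for boxes meeting $0$ or $ne_1$, and the possibility that $\mathcal{G}$ enters and leaves $\mathcal{B}$ several times), and distilling from it the clean local estimate \eqref{UsedPhiIneq} in terms of only the two boundary segments of $\mathcal{G}$ at $\mathcal{B}$. After that, the argument is a routine assembly of the lattice-animal tail bound for the number of boxes touched (Lemma~\ref{BoxBound}), the lattice-animal bound for geodesic segment lengths (Lemma~\ref{SegmentBound}), and the linear growth of $\mathbb{E}\,T''(0,ne_1)$.
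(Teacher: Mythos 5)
Your proposal is correct and follows essentially the same route as the paper's proof: bound $T''_{\mathcal{B},\mathbf{0}}-T''$ by $\phi_n(\lVert r_{\mathcal{B}}^--r_{\mathcal{B}}^+\rVert)$ via the detour path, split with Lemma~\ref{PhiIneq} into the two boundary segments plus a bounded within-box term, and then use Lemma~\ref{SegmentBound} for the segment-length sums and Lemma~\ref{BoxBound} for $\mathbb{E}\,\#\mathcal{C}(0,ne_1)$. The only cosmetic difference is that Lemma~\ref{SegmentBound} is already stated for an arbitrary exponent $p>1$, so no extension of its proof is needed, and your extra subtracted terms and the separate $p=1$ shortcut are unnecessary but harmless.
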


\begin{lem} \label{TInfinityBound}
For any integer $p \geq 1$, there is a $C > 0$ such that for all $n \geq 1$,
\[
\mathbb{E} \sum_{\mathcal{B}} \left[ T''(0, n e_1) - T''_{\mathcal{B}, \infty}(0, n e_1) \right]^p \mathbbm{1}_{\{\mathcal{B} \mbox{\scriptsize\ used}\}}(0) < C n.
\]
\end{lem}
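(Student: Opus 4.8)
Since adding Poisson points to a box can only decrease passage times, every summand is nonnegative, and the factor $\mathbbm{1}_{\{\mathcal{B}\text{ used}\}}(0)$ restricts the sum to $\mathcal{C}:=\mathcal{C}(0,ne_1)$, the set of unit boxes touched by the $T''$-geodesic from $0$ to $ne_1$. As in the proofs of Lemmas~\ref{UniformBound} and \ref{BernoulliBound}, I would split the left-hand side into a sum over $\{\#\mathcal{C}<C_1 n\}$ and a sum over $\{\#\mathcal{C}\geq C_1 n\}$ for a large constant $C_1$. On $\{\#\mathcal{C}\geq C_1 n\}$ one uses the crude deterministic bound $0\leq T''(0,ne_1)-T''_{\mathcal{B},\infty}(0,ne_1)\leq T''(0,ne_1)$ (so the sum over $\mathcal{B}$ is at most $\#\mathcal{C}\cdot T''(0,ne_1)^p$), followed by Cauchy--Schwarz together with the nearly-exponential tail of $\#\mathcal{C}$ from Lemma~\ref{BoxBound} and the finiteness of all moments of $\#\mathcal{C}$ and of $T''(0,ne_1)$ (Lemma~\ref{TTails}); this bounds the $\{\#\mathcal{C}\geq C_1 n\}$ contribution by $C_2 e^{-c_3 n^\kappa}$, which is negligible, exactly as in \eqref{LongPoissonBound} and the analogous step of Lemma~\ref{BernoulliBound}.

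The substance is the event $\{\#\mathcal{C}<C_1 n\}$, on which $\mathcal{C}$ is almost surely a lattice animal in $\mathcal{A}_0(C_1 n)$. Here I would establish a pointwise per-box bound $\big(T''(0,ne_1)-T''_{\mathcal{B},\infty}(0,ne_1)\big)\mathbbm{1}_{\{\mathcal{B}\text{ used}\}}(0)\leq g_\mathcal{B}$, where $g_\mathcal{B}$ is a random variable of the form $C+C\,(\text{largest scale of a }Q_n\text{-empty box near }\mathcal{B})^{\alpha}$ with stretched-exponential tails uniform in $\mathcal{B}$ and $n$; in particular $g_\mathcal{B}^{\,p}$ has finite moments of all orders, uniformly. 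Granting this, since $\mathcal{C}\in\mathcal{A}_0(C_1 n)$ on the event in question, the greedy lattice animal estimate of Theorem~\ref{GreedyLatticeThm} (after the usual reduction to handle the dependence among the $g_\mathcal{B}$'s) gives $\mathbb{E}\big[\mathbbm{1}_{\{\#\mathcal{C}<C_1 n\}}\sum_{\mathcal{B}\in\mathcal{C}}g_\mathcal{B}^{\,p}\big]\leq\mathbb{E}\max_{A\in\mathcal{A}_0(C_1 n)}\sum_{\mathcal{B}\in A}g_\mathcal{B}^{\,p}\leq Cn$, which, combined with the previous paragraph, finishes the proof.

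Producing $g_\mathcal{B}$ is the main obstacle; this is Lemma~\ref{OffSegmentBound}, the ``adding points'' counterpart of Lemma~\ref{TZeroBound} and a generalization of Equation~(3.10) of \cite{Howard:2001}, and unlike the removal case it cannot be reduced to lengths of segments of the original geodesic. I would proceed as follows. Let $\mathcal{G}_\infty$ be the $T''_{\mathcal{B},\infty}$-geodesic; if it avoids $\mathcal{B}$ it is a legal path in the original environment and the difference is $\leq 0$, so assume it meets $\mathcal{B}$, and let $q^{-}$, $q^{+}$ be the points of $Q_n$ (or $0$, $ne_1$) that $\mathcal{G}_\infty$ visits immediately before entering and immediately after leaving $\mathcal{B}$. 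Because the part of $\mathcal{G}_\infty$ inside $\mathcal{B}$ is free, replacing it by a genuine $T''$-crossing of the original environment from $q^{-}$ to $q^{+}$ is legal, so
\[
T''(0,ne_1)-T''_{\mathcal{B},\infty}(0,ne_1)\leq T''(q^{-},q^{+})-\phi_n(\mathrm{dist}(q^{-},\mathcal{B}))-\phi_n(\mathrm{dist}(q^{+},\mathcal{B})),
\]
and routing from $q^{-}$ to $q^{+}$ through the nearest $Q_n$-point to $\mathcal{B}$, together with the modified triangle inequality for $\phi_n$ (Lemma~\ref{PhiIneq}), bounds $T''(q^{-},q^{+})$ by a constant plus a constant multiple of $\phi_n(\mathrm{dist}(q^{-},\mathcal{B}))+\phi_n(\mathrm{dist}(q^{+},\mathcal{B}))$. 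Thus everything reduces to controlling $\mathrm{dist}(q^{\pm},\mathcal{B})$, for which I would reuse the ``shortening region'' mechanism from the proof of Lemma~\ref{MaxLengthBound}: the entry point $v^{-}\in\mathcal{B}$ of $\mathcal{G}_\infty$ satisfies $\|q^{-}-v^{-}\|=\mathrm{dist}(q^{-},\mathcal{B})$, and optimality of $\mathcal{G}_\infty$ forces $\mathcal{W}_{\phi_n}(q^{-},v^{-})$ to contain no point of $Q_n$ outside $\mathcal{B}$ while $\mathcal{W}_{\phi_n}(q^{-},v^{-})\cap\mathcal{B}$ is a single point; by Lemma~\ref{W-dimensions} this convex region contains an axis-parallel box of side $\sim\sqrt{\mathrm{dist}(q^{-},\mathcal{B})}$ lying, once $\mathrm{dist}(q^{-},\mathcal{B})$ is large, essentially entirely outside $\mathcal{B}$, hence a $Q_n$-empty box of side $\sim\sqrt{\mathrm{dist}(q^{-},\mathcal{B})}$ within distance $\sim\mathrm{dist}(q^{-},\mathcal{B})$ of $\mathcal{B}$. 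A union bound over the polynomially-many candidate boxes near $\mathcal{B}$, each empty with probability $e^{-c(\cdot)^{d/2}}$, then yields the stretched-exponential tail claimed for $g_\mathcal{B}$. The delicate points, where I expect the real work to lie, are (i) making this forced empty region localized near $\mathcal{B}$ quantitatively enough that the union bound survives, and (ii) arranging $g_\mathcal{B}$ so that the greedy lattice animal estimate genuinely applies despite the dependence between neighboring boxes.
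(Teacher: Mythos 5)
Your first two paragraphs and your reduction of the per-box difference are essentially the paper's own argument for this lemma: on the event that the box is relevant, you skip the free portion of the $T''_{\mathcal{B},\infty}$-geodesic and, via the modified triangle inequality (Lemma \ref{PhiIneq}), bound $T''(0,ne_1)-T''_{\mathcal{B},\infty}(0,ne_1)$ by a constant plus a constant multiple of $\phi_n$ of the lengths of the entry and exit segments of that geodesic at $\mathcal{B}$, with the constant terms summing to $C\,\#\mathcal{C}$, which Lemma \ref{BoxBound} controls. At exactly this point the paper concludes by citing Lemma \ref{OffSegmentBound}, which is precisely the statement that $\sum_{\mathcal{B}}(L^\infty_{\mathcal{B}})^{p}\mathbbm{1}_{\{\mathcal{B}\text{ used}\}}(0)$ has stretched-exponential tails beyond $C_1n$, hence linear expectation. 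Had you cited that lemma as a black box, your proof would be complete and identical in substance to the paper's.

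The genuine gap is in your sketch of that key input. The per-box tail for $g_{\mathcal{B}}$ (via emptiness of $\mathcal{W}_{\phi_n}(q^-,v^-)$, Lemma \ref{W-dimensions}, and a union bound over polynomially many candidate boxes) is fine, but the final summation step is not: Theorem \ref{GreedyLatticeThm} requires i.i.d.\ box weights, and there is no ``usual reduction'' available here because the dependence of the $g_{\mathcal{B}}$'s has unbounded range. The vacant region forced by a long segment has side $\sim\sqrt{\mathrm{dist}}$ and sits at distance $\sim\mathrm{dist}$ from $\mathcal{B}$, so $g_{\mathcal{B}}$ depends on the configuration in a region whose diameter is comparable to $g_{\mathcal{B}}^{1/\alpha}$ itself, and a single large empty region simultaneously inflates $g_{\mathcal{B}'}$ for every box $\mathcal{B}'$ in a macroscopic neighborhood; a finite-range ($3^d$-partition) decoupling of the kind used elsewhere in the paper cannot handle this. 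The paper's proof of Lemma \ref{OffSegmentBound} is designed around exactly this obstruction: using Lemma \ref{E-Regions} (convexity plus Lemma \ref{W-dimensions} alone only give an empty box near the midpoint, far from $\mathcal{B}$) it anchors a constant-width empty region within a \emph{bounded} box-distance of $\mathcal{B}$ and extracts a face-connected cluster of empty $\lambda$-boxes of cardinality proportional to the segment length; it bounds the number of anchors by the $\epsilon$-grid count $M(ne_1)$ (Equation \eqref{MEpsilonBound}); and it replaces the i.i.d.\ greedy-animal theorem by a pigeonhole split between ``one huge empty cluster'' and ``large average cluster size along a lattice animal,'' handled by Grimmett's subcritical cluster tail and Theorem 6.4.2 of \cite{Howard:1999}, a lattice-animal bound tailored to cluster-size variables. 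So, as written, the last step of your plan fails; the repair is essentially the paper's proof of Lemma \ref{OffSegmentBound}.
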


Before proving these lemmas, we will need statements about the sums of powers of lengths of segments in our $T''$-geodesic, or lengths of segments near (but not on) the $T''$-geodesic.  The following two statements summarize these results, and are mild extensions of arguments from \cite{Howard:2001}.  The intuition behind the proof structure is as follows: if a segment in a $T''$-geodesic is large, this implies that there is a large region where there are no Poisson points.  Although this may happen for some segments of a $T''$-geodesic, it is unlikely to happen for many sections of the $T''$-geodesic simultaneously, as shown by some lattice animal arguments.  The statements are as follows:

\begin{lem} \label{SegmentBound}
Let the $T''$-geodesic from $0$ to $n e_1$ be denoted by the points $(r_1, r_2, \ldots, r_N)$, and let $L_k$ denote the Euclidean length of the $k$th segment, $L_k := \lVert r_k - r_{k + 1} \rVert$.  Then, for any $p > 1$, there exist positive constants $C_1, c_2$, and $c_3$ such that:
\[
\mathbb{P}\left[ \sum_{k = 1}^{N - 1} L_k^p > x \right] \leq C_1 \exp\left(-c_2x^{c_3}\right) \mbox{ for all } x \geq C_1 n.
\]
\end{lem}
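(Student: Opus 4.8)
The plan is to follow the greedy lattice animal strategy of \cite{Howard:1999,Howard:2001} — which already treats the case needed for the variance bound there — in the refined form used in the proofs of Lemmas~\ref{BoxBound} and \ref{MaxLengthBound}. The guiding principle is the one used repeatedly above: a long segment of the $T''$-geodesic forces a large $Q$-free region near the geodesic, and such regions are exponentially unlikely; hence a geodesic for which $\sum_k L_k^p$ is abnormally large must either touch abnormally many unit boxes or carry such a forced empty region of size comparable to a power of $x$. First I would reduce to a bounded window. Set $R := \max(C_0 n, \lceil x^{1/2}\rceil)$ for a large constant $C_0$. On $\{\#\mathcal{C}(0,ne_1) > R\}$, Lemma~\ref{BoxBound} (with $\gamma = 1$, $C_3 = 1$, using $\|0-ne_1\|=n$ and $R \ge C_0 n$) gives probability at most $Ce^{-cR^\kappa} \le C_1 e^{-c_2 x^{\kappa/2}}$, which is of the desired form. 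So it suffices to bound $\mathbb{P}(\sum_k L_k^p > x,\ \#\mathcal{C} \le R)$; on this event the geodesic, together with every lens $\mathcal{W}_{\phi_n}(r_k,r_{k+1})$ along it, lies in $[-C'R,C'R]^d$, and since the interior vertices of the geodesic are $Q_n$-points, $N-2 \le \sum_{\mathcal{B}\in\mathcal{C}} p_\mathcal{B}$, while $\mathcal{C} \in \mathcal{A}_0(R)$.

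Next I would implement the empty-region estimate. As in the proof of Lemma~\ref{MaxLengthBound}, Lemma~\ref{W-dimensions} together with the convexity of $\mathcal{W}_{\phi_n}$ (Lemma~5.1 of \cite{Howard:2001}) shows that a geodesic segment of length $L \ge 1$ between consecutive points $r_k, r_{k+1}$ forces an axis-parallel box of side $c_1\sqrt L$ in $\mathcal{W}_{\phi_n}(r_k,r_{k+1})$ with no $Q_n$-point; since $Q_n$ retains the leftmost $Q$-point of every $\epsilon/3^n$-cell, such a box contains a concentric box of side $c_1\sqrt L/2$ with no $Q$-point, and this box lies within distance $O(L)$ of the segment's midpoint, whose containing box belongs to $\mathcal{C}$. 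For each unit box $\mathcal{B}$ I would then set $W_\mathcal{B}$ to be the largest $s$ for which some $Q$-free axis-parallel $s$-box lies within distance $Cs^2$ of $\mathcal{B}$; a crude union bound over positions gives $\mathbb{P}(W_\mathcal{B} > s) \le Cs^{2d}e^{-cs^d}$, so $W_\mathcal{B}$ has finite moments of every order, and by construction $L_k \le 4c_1^{-2}W_{\mathcal{B}_k}^2$ where $\mathcal{B}_k \in \mathcal{C}$ is the midpoint-box of segment $k$. Each box is the midpoint-box of at most $|Q\cap B(\mathcal{B},CW_\mathcal{B}^2)|+1$ segments (their endpoints being $Q_n$-points within $O(W_\mathcal{B}^2)$ of $\mathcal{B}$), and the at most $N$ segments of length $<1$ contribute at most $\sum_{\mathcal{B}\in\mathcal{C}}p_\mathcal{B}+2$ in total, so
\[
\sum_{k} L_k^p \le C \sum_{\mathcal{B}\in\mathcal{C}} Z_\mathcal{B}, \qquad Z_\mathcal{B} := \big(|Q\cap B(\mathcal{B},CW_\mathcal{B}^2)|+1\big)\big(W_\mathcal{B}^{2p}+1\big)+p_\mathcal{B},
\]
a functional of $Q$ near $\mathcal{B}$ that again has finite moments of all orders.

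Finally, on $\{\#\mathcal{C}\le R\}$ one has $\mathcal{C}\in\mathcal{A}_0(R)$, hence $\sum_k L_k^p \le C\max_{A\in\mathcal{A}_0(R)}\sum_{\mathcal{B}\in A}Z_\mathcal{B}$, whose expectation is $O(R)$ by Theorem~\ref{GreedyLatticeThm}; since $x \ge C_1 n$ and $x^{1/2}\le x$, the event $\{\sum_k L_k^p > x\}$ is a genuine large deviation for this greedy animal quantity. The main work — and the main obstacle — is to upgrade the $L^1$ estimate of Theorem~\ref{GreedyLatticeThm} to a nearly-exponential tail bound $\mathbb{P}\big(\max_{A\in\mathcal{A}_0(R)}\sum_{\mathcal{B}\in A}Z_\mathcal{B} > x\big) \le C_1 e^{-c_2 x^{c_3}}$ valid for all $x \ge C_1 R$. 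I would obtain this by truncating each $Z_\mathcal{B}$ at level $\tau := (\log x)^{2p/d}$: on the overwhelmingly likely event that $W_\mathcal{B}\le(\log x)^{2/d}$ for every $\mathcal{B}$ in the window, the truncated variables are bounded with range $O((\log x)^{4/d})$, so Hoeffding's inequality combined with $|\mathcal{A}_0(R)|\le 7^{dR}$ controls their contribution, while the contribution of $\sum_{\mathcal{B}}(Z_\mathcal{B}-\tau)^+$ over the window is super-rare by the stretched-exponential tails of $W_\mathcal{B}$ and $|Q\cap B(\mathcal{B},CW_\mathcal{B}^2)|$ — this last step being in essence the dyadic version: scale-$2^j$ segments with $2^j\gtrsim(\log x)^{2/d}$ are individually unlikely, using a $3^d$-colouring to make the forced empty boxes disjoint exactly as in the proof of Lemma~\ref{BoxBound}. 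Combining the cases $\{\#\mathcal{C}>R\}$ and $\{\#\mathcal{C}\le R\}$ then yields the lemma with $c_3 = \min(\kappa/2, d/2p)$ (up to the usual adjustment of constants for small $n$).
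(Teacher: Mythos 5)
First, note that the paper does not give an independent proof of this lemma at all: it is quoted from Eq.~(3.10) of \cite{Howard:2001} (with the power $2\alpha$ replaced by $p$, which changes nothing in that proof), and the paper's own Lemma~\ref{OffSegmentBound} shows what that proof looks like: a dichotomy between ``the sum is carried by few boxes'' (forcing one huge cluster of empty $\lambda$-boxes, killed by the subcritical-percolation tail of Theorem~6.75 of Grimmett) and ``the sum is spread over at least $\ell^\gamma$ boxes with large average'' (killed by the dedicated lattice-animal deviation theorem, Theorem~6.4.2 of \cite{Howard:1999}). Your setup — long segments of the geodesic force $Q$-free boxes of side $\asymp\sqrt{L_k}$ near the geodesic via Lemma~\ref{W-dimensions} and convexity, leading to box variables $W_{\mathcal{B}}$, $Z_{\mathcal{B}}$ with stretched-exponential tails and to $\sum_k L_k^p\le C\sum_{\mathcal{B}\in\mathcal{C}}Z_{\mathcal{B}}$ — is sound and is in the same spirit as Lemma~\ref{MaxLengthBound} and the Howard--Newman argument (modulo small repairs: $Z_{\mathcal{B}}$ is not i.i.d., so Theorem~\ref{GreedyLatticeThm} does not apply verbatim, and one should work with localized versions of $W_{\mathcal{B}}$).

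The genuine gap is in the step you yourself identify as ``the main work'': upgrading to a tail bound for $\max_{A\in\mathcal{A}_0(R)}\sum_{\mathcal{B}\in A}Z_{\mathcal{B}}$. The mechanism you propose — truncate at $\tau=(\log x)^{2p/d}$, apply Hoeffding (with a colouring to restore independence) to the truncated part, and union over the $7^{dR}$ animals — fails quantitatively in exactly the critical regime $x\asymp C_1 n$, where $R\asymp n\asymp x$. There the per-animal Hoeffding exponent is of order $x^2/(R\,\tau^2)\asymp x/\mathrm{polylog}(x)$, which cannot beat the entropy $e^{dR\ln 7}\asymp e^{cx}$ for any \emph{constant} $C_1$; making $C_1$ large enough to compensate would force $C_1\gtrsim\mathrm{polylog}(x)$. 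Truncating at a constant instead does not help, because the excess $\sum_{\mathcal{B}}(Z_{\mathcal{B}}-\tau)^+$ cannot be controlled ``over the window'': the window $[-CR,CR]^d$ contains $\asymp R^d\gg x$ boxes, so this excess has expectation far exceeding $x$; and controlling the event that \emph{some} window box exceeds a polylog truncation only yields a polynomially small probability, not the required $\exp(-c_2x^{c_3})$. What is needed (and what Howard--Newman, and the paper's proof of Lemma~\ref{OffSegmentBound}, actually do) is the structural dichotomy above: either the excess is concentrated on few boxes, which forces a single empty region of volume a power of $x$ (giving the stretched exponential directly), or it is spread out, in which case one needs a lattice-animal deviation theorem for the cluster-size-type variables at a \emph{constant} threshold — not a Hoeffding union bound. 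Your closing sentence gestures at a dyadic/disjointification argument of this kind, but it is not carried out, and it also has to confront the multiplicity issue that a single empty region can feed the $Z_{\mathcal{B}}$ of many boxes along the animal. As written, the proposal does not establish the stated tail bound; the honest fix is either to run the $S_1+S_2+S_3$ decomposition of Lemma~\ref{OffSegmentBound}/\cite{Howard:2001} for your $Z_{\mathcal{B}}$, or simply to cite Eq.~(3.10) of \cite{Howard:2001} as the paper does.
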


This lemma is identical to Equation (3.10) in \cite{Howard:2001}, except that the power $p$ was $2\alpha$ in the paper.  This makes no difference in their proof, which is not reproduced here.  However, we will prove the similar statement, Lemma \ref{OffSegmentBound}, below.

In this paper, we also need similar results bounding the sums of lengths of segments \emph{near} the $T''$-geodesic.  The reason for this is that we consider resampling the Poisson points in unit boxes, which may add points to the box.  There is no simple way to bound the amount that a $T''$-geodesic can be shortened when these points are added, in terms of the original $T''$-geodesic.  Instead, we will have to argue that any segment near the $T''$-geodesic cannot be large, and since these segments could be used in the resampled environment, this will imply that the change in the $T''$-geodesic from adding points is small.

More precisely, for any unit box, $\mathcal{B}$, define $\mathcal{Q}_{\mathcal{B}}$ to be the points $q \in Q_n \backslash \mathcal{B}$ closest to $\mathcal{B}$ in the following sense: there is a $T''$-geodesic from $q$ to some point $x \in \partial \mathcal{B}$ that equals the segment from $q$ to $x$.  Note that here, $x$ need not be a Poisson point in $Q_n$, because the $T''$ distance adds a point at $x$.  Then, define
\[
L_{\mathcal{B}}^\infty = \max_{q \in \mathcal{Q}_{\mathcal{B}}} \left[ \min_{x \in \partial \mathcal{B}} \lVert q - x \rVert \right].
\]

\begin{lem} \label{OffSegmentBound} For any $p > 1$, there exist positive constants $C_1, C_2,$ and $C_3$ such that:
\[
\mathbb{P}\left[ \sum_{\mathcal{B}} \left[(L_\mathcal{B}^\infty)^p \mathbbm{1}_{\{\mathcal{B}\mbox{\scriptsize\ used}\}}(0)\right] > x \right] \leq C_1 \exp\left(-C_2x^{C_3}\right) \mbox{ for all } x \geq C_1 n.
\]
\end{lem}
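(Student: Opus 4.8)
The strategy is to adapt the proof of Lemma~\ref{SegmentBound} (Equation~(3.10) of \cite{Howard:2001}), together with the lattice-animal scheme of Lemma~\ref{BoxBound}, to the ``off-geodesic'' quantities $L_\mathcal{B}^\infty$. I would first reduce to the event that the $T''$-geodesic from $0$ to $ne_1$ is not too long: writing $\mathcal{C} = \mathcal{C}(0,ne_1)$, Lemma~\ref{BoxBound} with $\gamma = 1$ gives $\mathbb{P}(\#\mathcal{C} \geq c_* x) \leq C_1 e^{-C_2 x^\kappa}$ for all $x \geq C_1 n$, where $\kappa = \min(1,d/\alpha)$ and $c_*$ is a small fixed constant (depending on a constant $j_0$ chosen below); this is already of the desired form. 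On the complementary event the geodesic, and hence every box with $\mathbbm{1}_{\{\mathcal{B}\mbox{\scriptsize\ used}\}}(0) = 1$, lies in the face-connected lattice animal $\mathcal{C}$, which has at most $c_* x$ boxes and is contained in $[-c_* x, c_* x]^d$. Since all the summands are nonnegative, it therefore suffices to bound $\mathbb{P}\bigl(\sum_{\mathcal{B}\in\mathcal{C}}(L_\mathcal{B}^\infty)^p > x,\ \#\mathcal{C} < c_* x\bigr)$.

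The geometric input is the one already used for Lemma~\ref{MaxLengthBound}: a large value of $L_\mathcal{B}^\infty$ forces a large deterministic empty region near $\mathcal{B}$. If $L_\mathcal{B}^\infty \in[\ell,2\ell)$ then there is a point $q\in Q_n\setminus\mathcal{B}$ and an $x'\in\partial\mathcal{B}$ with $\ell\leq \|q-x'\| < 2\ell + \sqrt d$ such that the segment $[q,x']$ is a $T''$-geodesic; any point of $Q_n$ in the interior of $\mathcal{W}_{\phi_n}(q,x')$ would strictly shorten this segment, so that interior is free of $Q_n$-points, and by the convexity of $\mathcal{W}_{\phi_n}$ (Lemma~5.1 of \cite{Howard:2001}) together with Lemma~\ref{W-dimensions} it contains an axis-parallel box of side at least $c_1\sqrt\ell$ (and, when $\ell\leq h_n$, of side $\asymp\ell$). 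Discretizing, whenever $L_\mathcal{B}^\infty\in[\ell,2\ell)$ there is a box of a fixed grid, of mesh $\asymp$ (the relevant side length) and lying within distance $\asymp\ell$ of $\mathcal{B}$, that contains no point of $Q$ --- here one uses that a box of side $\geq 1$ is $Q$-empty if and only if it is $Q_n$-empty. Writing $Z^{(j)}_\mathcal{B}$ for the indicator that such a grid box exists near $\mathcal{B}$ at dyadic scale $j$ (that is, corresponding to $(L_\mathcal{B}^\infty)^p\in[2^j,2^{j+1})$), the event $Z^{(j)}_\mathcal{B}$ depends only on $Q$ in a neighbourhood of $\mathcal{B}$ of diameter $\asymp 2^{j/p}$, and $\mathbb{P}(Z^{(j)}_\mathcal{B}=1)$ decays like $\exp(-c\,2^{j\theta})$ for an appropriate $\theta = \theta(d,p,\alpha) > 0$.

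Next I would split the sum dyadically: with $N_j := \#\{\mathcal{B}\in\mathcal{C} : (L_\mathcal{B}^\infty)^p\in[2^j,2^{j+1})\}$ one has $\sum_{\mathcal{B}\in\mathcal{C}}(L_\mathcal{B}^\infty)^p \leq \#\mathcal{C} + \sum_{j\geq 0}2^{j+1}N_j$, and $\sum_{0\leq j < j_0}2^{j+1}N_j \leq 2^{j_0}\#\mathcal{C}$ for any fixed $j_0$. Fixing $j_0$ (a constant) and then $c_* = 2^{-j_0-2}$, the small-scale part is at most $x/4$ deterministically on $\{\#\mathcal{C} < c_* x\}$, so on $\{\sum > x\}\cap\{\#\mathcal{C}<c_*x\}$ there is a scale $j\geq j_0$ with $N_j \geq n_j$, where $n_j \asymp x/(2^j(j+1)^2)$; it then remains to bound $\mathbb{P}(N_j\geq n_j,\ \#\mathcal{C}<c_*x)$ for each such $j$ and to sum over the $O(\log x)$ relevant scales. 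On this event there are at least $n_j$ boxes of the animal $\mathcal{C}\subseteq[-c_*x,c_*x]^d$ with $Z^{(j)}_\mathcal{B}=1$. For moderate scales I would estimate this in the spirit of the two halves of the proof of Lemma~\ref{BoxBound}: partition $\mathbb{Z}^d$ into $K_j\asymp 2^{jd/p}$ classes within which the $Z^{(j)}_\mathcal{B}$ are independent, so that some class carries $\geq n_j/K_j$ of them, and combine a binomial tail estimate with the count $|\mathfrak{M}_m|\leq 7^{dm}$ of lattice animals of size $m\leq c_*x$ (Equation~(4.24) of \cite{Grimmett:1999}) and, where the plain union bound over animals is too weak, with the fact that many used boxes near a void force the $T''$-geodesic to be long or costly, which is controlled by Lemma~\ref{TprimeTails} (equivalently Lemma~\ref{TTails}) and by the point-count lattice-animal bound of Theorem~\ref{GreedyLatticeThm}. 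For the very largest scales, where $2^{j\theta}$ is already a positive power of $x$, one can drop the connectivity altogether and union directly over the $\lesssim x^d$ possible positions of a single empty box. In each case the outcome is a bound of the form $C_1 e^{-C_2 x^{C_3}}$, and summing over the scales preserves it.

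The main obstacle is the bookkeeping that makes all of this uniform in $j$: one must arrange the fixed constants --- $j_0$, $c_*$, the grid meshes, the class counts $K_j$, and the binomial thresholds --- so that every scale simultaneously yields a stretched-exponential bound in $x$ and, where the lattice-animal union bound is used, beats the factor $7^{dm}$ with $m$ as large as $c_* x$. The awkward range is the intermediate scales, at which the indicators $Z^{(j)}_\mathcal{B}$ are neither so rare that a positional union bound suffices nor so common that the deterministic small-scale estimate applies; there one genuinely needs to blend the lattice-animal argument with the ``detour cost'' idea from the second half of the proof of Lemma~\ref{BoxBound}. This is also the point at which the present lemma goes beyond Equation~(3.10) of \cite{Howard:2001}: there the quantities summed are lengths of segments \emph{of} the geodesic, whereas here they measure how far a point can be and still see $\partial\mathcal{B}$ along a straight $T''$-geodesic, so the empty-region extraction and the connectivity bookkeeping must be carried out for this larger family of configurations.
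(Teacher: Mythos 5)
Your overall architecture (control the geodesic's extent, extract deterministic empty regions from large $L_\mathcal{B}^\infty$, trade void probabilities against lattice-animal entropy, treat huge scales by a positional union bound) matches the shape of the paper's argument, and you correctly locate the hard case. But the intermediate scales are a genuine gap, not just bookkeeping, and the mechanism you sketch cannot close it. At a dyadic scale $j$ your void is a single box of side $\asymp\sqrt{\ell_j}$ (from Lemma~\ref{W-dimensions}), which can be shared by $\asymp\ell_j^{\,d}$ different unit boxes $\mathcal{B}$; after discounting for this sharing, the probabilistic gain you extract from $N_j\geq n_j$ is at most of order $x/2^{j(1+d/(2p))}$ in the exponent, while the union over animals of size up to $c_*x$ costs a fixed $c_* x\, d\ln 7$, with $c_*$ a constant. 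The per-box gain is a fixed small constant (it comes from the non-tunable constant $c_1$ in Lemma~\ref{W-dimensions}) and decreases with $j$, so there is no parameter you can turn to beat the entropy, even at $j$ just above $j_0$. The two patches you invoke do not rescue this: the ``detour cost'' idea from the second half of Lemma~\ref{BoxBound} fails here precisely because the voids measured by $L_\mathcal{B}^\infty$ are \emph{near} but not \emph{on} the geodesic, so the geodesic incurs no passage-time penalty from them (this is exactly the point where the lemma goes beyond Equation~(3.10) of \cite{Howard:2001}); and Theorem~\ref{GreedyLatticeThm} is an $L^1$/a.s.\ law of large numbers for greedy animals, not a stretched-exponential deviation bound, so it cannot produce the tail you need.

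The paper supplies the missing tunable largeness through Lemma~\ref{E-Regions}: for $\lVert b-a\rVert>h_0$ the empty region $\mathcal{W}_{\phi_n}(a,b)$ contains a rectangle of length $\approx\lVert b-a\rVert$ and \emph{arbitrarily large constant} height $2E$, hence a face-connected cluster of empty $\lambda$-boxes of size $\#\Lambda(a,b)\geq D_E\lVert b-a\rVert$ with $D_E$ as large as desired, anchored within bounded box-distance of the $\lambda$-box containing $a$. Summing $(L_\mathcal{B}^\infty)^p\leq C_\lambda D_E^{-p}(\#\Lambda_\nu)^p$ over the random animal $\xi$ of $\lambda$-boxes along the geodesic (with $|\xi|\leq C_d M(ne_1)$, whose tail comes from Equation~\eqref{MEpsilonBound} rather than your $\#\mathcal{C}<c_*x$ reduction), the paper splits on $|\xi|<\ell^\gamma$ versus $\ell^\gamma\leq|\xi|\leq C_d\ell$: the first case is a positional union bound plus the subcritical cluster-size tail (Theorem~6.75 of \cite{Grimmett:1999}), and the second uses the lattice-animal deviation bound of Theorem~6.4.2 of \cite{Howard:1999}, whose hypothesis is exactly that the threshold $D_E^p/(C_\lambda C_d)$ is large --- i.e.\ the tunable $E$ is what beats the animal entropy uniformly over scales. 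Without an analogue of this linear-in-$\lVert b-a\rVert$, arbitrarily dense family of forced empty boxes (or some other tunable parameter), your scale-by-scale union bound cannot be completed, so the proposal as written does not prove the lemma.
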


Now that we stated the main results we will need, we begin with the proofs of these lemmas, starting with Lemma \ref{TZeroBound}.

\begin{proof}[Proof of Lemma \ref{TZeroBound}]
Similarly to the proof of Lemma \ref{MaxV}, let $s_{\mathcal{B}}^-$ be the first point the $T''$-geodesic uses from $\mathcal{B}$, and let $r_\mathcal{B}^-$ be the point immediately preceding $s_\mathcal{B}^-$.  Likewise, let $s_\mathcal{B}^+$ be the last point the $T''$-geodesic uses from $\mathcal{B}$, and let $r_\mathcal{B}^+$ be the next point the $T''$-geodesic uses after $s_\mathcal{B}^+$.  If $\mathcal{B}$ contains $0$, let $r_\mathcal{B}^- = s_\mathcal{B}^- = 0$, and if $\mathcal{B}$ contains $ne_1$, let $r_\mathcal{B}^+ = s_\mathcal{B}^+ =  ne_1$. Then, we can see that $ T''_{\mathcal{B}, \mathbf{0}}(0, ne_1) \leq T''(0, ne_1) + \phi_n(\lVert r_\mathcal{B}^- - r_\mathcal{B}^+ \rVert)$.  By applying the modified triangle inequality for $\phi_n$, Lemma \ref{PhiIneq}, we obtain the following:
\begin{multline}
\sum_\mathcal{B} \mathbb{E} \big[ T''_{\mathcal{B}, \mathbf{0}}(0, ne_1) - T''(0, ne_1) \big]^p  \leq \sum_\mathcal{B} \mathbb{E} \big[ \phi_n(\lVert r_\mathcal{B}^- - r_\mathcal{B}^+\rVert) \mathbbm{1}_{\{\mathcal{B}\mbox{\scriptsize\ used}\}}(0) \big]^p \\
\leq 2^{2\alpha p} \mathbb{E} \sum_\mathcal{B} \big[ \phi_n(\lVert r_\mathcal{B}^- - s_\mathcal{B}^-\rVert) + \phi_n(\lVert s_\mathcal{B}^- - s_\mathcal{B}^+\rVert) + \phi_n(\lVert s_\mathcal{B}^+ - r_\mathcal{B}^+\rVert)  \big]^p \mathbbm{1}_{\{\mathcal{B}\mbox{\scriptsize\ used}\}}(0). \label{UsedPhiIneq}
\end{multline}

Now, all of the $\phi_n(\lVert s_\mathcal{B}^- - s_\mathcal{B}^+\rVert)$ terms are bounded by a constant $C$, since each box $\mathcal{B}$ has side length one.  Because of the indicator function, these terms correspond to segments of the optimal path, where each segment appears at most twice (in case a situation occurs like $r_{\mathcal{B}_1}^- = s_{\mathcal{B}_2}^+$ for two different boxes $\mathcal{B}_1$ and $\mathcal{B}_2$).  As before, let $\# \mathcal{C} = \# \mathcal{C}(0, ne_1)$ be the number of boxes the $T''$-geodesic from $0$ to $ne_1$ touches.  Additionally, define $L_k$ as the Euclidean length of the $k$th segment of the $T''$-geodesic.  Then, by noticing that $(a + b)^p \leq 2^p(a^p + b^p)$ for $a, b > 0$ and also that $\phi_n(r) \leq r^\alpha$ for all $r$, we can bound the expression in \eqref{UsedPhiIneq} by the following:
\begin{equation*} \label{TwoLinearParts} 
2^{2\alpha p + 2p + 1} \left[ \mathbb{E} \sum_k L_k^{\alpha p} + C\mathbb{E} \# \mathcal{C} \right].
\end{equation*}
We have a linear bound for the sum of $L_k^{\alpha p}$ by Lemma \ref{SegmentBound}.  The nearly-exponential tails from Lemma \ref{BoxBound} show that $\mathbb{E} \# \mathcal{C}$ must be linear as well, completing the proof.
\end{proof}

\begin{proof}[Proof of Lemma \ref{TInfinityBound}]
This proof is very similar to the proof of Lemma \ref{TZeroBound}.  Consider the $T''$-geodesic in the new environment in which it takes no time to pass between points in the box $\mathcal{B}$.  If the original $T''$-geodesic passed through $\mathcal{B}$, then the $T''$-geodesic in the new environment does as well.  In this case, let $r_{\infty, \mathcal{B}}^-$ be the last point in this new $T''$-geodesic before it touches $\mathcal{B}$, and let $s_{\infty, \mathcal{B}}^-$ be the point in $\mathcal{B}$ that it touches.  Similarly, let $s_{\infty, \mathcal{B}}^+$ be the point in $\mathcal{B}$ where the $T''$-geodesic leaves, and let $r_{\infty, \mathcal{B}}^+$ be the next point in the path.  When $\mathcal{B}$ contains $0$, we let $r_{\infty, \mathcal{B}}^- = s_{\infty, \mathcal{B}}^- = 0.$  When $\mathcal{B}$ contains $ne_1$, we let $r_{\infty, \mathcal{B}}^+ = s_{\infty, \mathcal{B}}^+ = ne_1$.  
Then, $[T''(0, ne_1) - T_{\mathcal{B}, \infty}''(0, ne_1)]\mathbbm{1}_{\{\mathcal{B}\mbox{\scriptsize\ used}\}}(0) \leq \phi_{n}(\lVert r_{\infty, \mathcal{B}}^- - r_{\infty, \mathcal{B}}^+ \rVert)\mathbbm{1}_{\{\mathcal{B}\mbox{\scriptsize\ used}\}}(0)$.  Recall that $\#\mathcal{C} = \#\mathcal{C}(0, ne_1)$ is the number of boxes the $T''$-geodesic from $0$ to $ne_1$ touches.  Again, using Lemma \ref{PhiIneq} and following the same reasoning as in Lemma \ref{TZeroBound}, we get:
\begin{align*}
&\sum_{\mathcal{B}} \mathbb{E}[T''(0, ne_1) - T_{\mathcal{B}, \infty}''(0, ne_1)]^p\mathbbm{1}_{\{\mathcal{B}\mbox{\scriptsize\ used}\}}(0) \\
\leq~& \sum_\mathcal{B} \mathbb{E}\left[\phi_n(\lVert r_{\infty, \mathcal{B}}^- - r_{\infty, \mathcal{B}}^+ \rVert) \mathbbm{1}_{\{\mathcal{B}\mbox{\scriptsize\ used}\}}(0)\right]^p\\
\leq~& 2^{2\alpha p} \mathbb{E} \sum_\mathcal{B} \big[ \phi_n(\lVert r_{\infty, \mathcal{B}}^- - s_{\infty, \mathcal{B}}^-\rVert) + \phi_n(\lVert s_{\infty, \mathcal{B}}^- - s_{\infty, \mathcal{B}}^+\rVert) + \phi_n(\lVert s_{\infty, \mathcal{B}}^+ - r_{\infty, \mathcal{B}}^+\rvert)  \big]^p \mathbbm{1}_{\{\mathcal{B}\mbox{\scriptsize\ used}\}}(0)\\
\leq~& 2^{2\alpha p + 2p + 1} \left[ \mathbb{E} \sum_\mathcal{B} (L_\mathcal{B}^\infty)^{\alpha p} \mathbbm{1}_{\{\mathcal{B} \mbox{\scriptsize\ used}\}}(0) + C\mathbb{E}\#{\mathcal{C}}\right].
\end{align*}
The first expectation is at most linear due to the exponential tails from Lemma \ref{OffSegmentBound}, and the second one is at most linear from Lemma \ref{BoxBound}, which completes the proof.
\end{proof}

\begin{proof}[Proof of Lemma \ref{OffSegmentBound}]
We follow the proof of Equation (3.10) in \cite{Howard:2001}.  First, we will split the sum of $(L_\mathcal{B}^\infty)^p$ into three pieces which are easier to analyze.  To do so, we will consider whether or not the path from $0$ to $ne_1$ uses points from many boxes. Additionally, we break down the lattice into four different, nested grids.  We have already used a grid of unit boxes $\mathcal{B}$, with vertices at points $\mathbb{Z}^d + (1/2, \ldots, 1/2)^d$.  Within that, we also have an $\epsilon/3^n$ grid from the definition of $Q_n$, where $\epsilon = 1/k$ for a positive odd integer $k$, so that the $\epsilon/3^n$ boxes nest evenly within the unit boxes.  This grid is chosen so that it is unlikely that an $\epsilon/3^n$ box has more than one Poisson point inside of it.  In \cite{Howard:2001}, the authors use this small grid size to deduce that $T$ and $T''$ behave similarly to each other, as described in Lemma \ref{TTails} below.  We will also use the intermediate $\epsilon$-grid, and we will consider the number of $\epsilon$-boxes that a $T''$-geodesic touches.  Equation (3.22) from \cite{Howard:2001} (reproduced as Equation \eqref{MEpsilonBound} below), states that it is unlikely that a $T''$-geodesic touches many $\epsilon$-boxes.  Finally, we will also need to consider a $\lambda$-grid, for $\lambda > 1$ a large odd integer (so that the unit boxes nest inside the $\lambda$ grid).  $\lambda$ is chosen to be large enough that the probability of a $\lambda$-box being empty is below the critical probability for site percolation on the nearest neighbor $\mathbb{Z}^d$ lattice.  This is a requirement of using Theorem 6.4.2 of \cite{Howard:1999}, which we use in Equation \eqref{xiPartTwo} below.

With this in mind, we will define a refined version of $\# \mathcal{C}(0, ne_1)$ from above, which was the number of unit boxes touched by the $T''$-geodesic from $0$ to $ne_1$.  Instead, consider counting the number of $\epsilon$-boxes along the $T''$-geodesic as follows: let $\beta_1$ be the $\epsilon$-box that covers $0$.  Then, for $i \geq 1$ let $\beta_{i+1}$ be the $\epsilon$-box that the $T''$-geodesic enters after it leaves $\beta_{i}$ for the last time (if such a box exists).  Continue counting boxes in this manner, and let $M(ne_1)$ be the total number of such $\epsilon$-boxes along the $T''$-geodesic from $0$ to $ne_1$.  Then, for any $\ell > 0$ and $h_0 > 0$ sufficiently large from the definition of $\phi_n$, define:
\begin{equation} \label{SBreakdown}
\sum_{\mathcal{B}} \left[(L_\mathcal{B}^\infty)^p \mathbbm{1}_{\{\mathcal{B}\mbox{\scriptsize\ used}\}}(0)\right] = S_1 + S_2 + S_3
\end{equation}
with
\begin{align*}
S_1 &= \sum_{\mathcal{B} : L_\mathcal{B}^\infty \leq h_0} \left[(L_\mathcal{B}^\infty)^p \mathbbm{1}_{\{\mathcal{B}\mbox{\scriptsize\ used}\}}(0)\right],\\
S_2 &= \mathbbm{1}_{\{M(ne_1) \geq \ell\}} \sum_{\mathcal{B} : L_\mathcal{B}^\infty > h_0} \left[(L_\mathcal{B}^\infty)^p \mathbbm{1}_{\{\mathcal{B}\mbox{\scriptsize\ used}\}}(0)\right],\\
\mbox{and}\\
S_3 &= \mathbbm{1}_{\{M(ne_1) < \ell\}} \sum_{\mathcal{B} : L_\mathcal{B}^\infty > h_0} \left[(L_\mathcal{B}^\infty)^p \mathbbm{1}_{\{\mathcal{B}\mbox{\scriptsize\ used}\}}(0)\right].
\end{align*}

First, $S_1 \leq h_0^p \cdot \# \mathcal{C}(0, ne_1)$, which has nearly-exponential tails according to Lemma \ref{BoxBound}.  Then, by Equation (3.22) in \cite{Howard:2001} (which relies on $\epsilon$ being sufficiently small), $M(ne_1)$ has nearly-exponential tails. So, for $\kappa = \min(1, d/\alpha)$, there are positive constants $C_1$ and $c_2$ such that:
\begin{equation}\label{MEpsilonBound}
\mathbb{P}(S_2 > \ell) \leq \mathbb{P}(M(ne_1) \geq  \ell) \nonumber \leq C_1 \exp(-c_2 \ell^{\kappa}) \mbox{ for all } \ell \geq C_1 n. 
\end{equation}

Now, handling the tails of $S_3$ is where our argument must diverge slightly from the argument in \cite{Howard:2001}.  When considering sums of $L_k^p$ in Lemma \ref{SegmentBound}, the key was to note that $L_k^p$ could only be large when a large region devoid of Poisson points intersected with the $T''$-geodesic.  However, for $L_\mathcal{B}^\infty$ to be large, we cannot guarantee that there is a large empty region \emph{on} the $T''$-geodesic, and we instead will argue that there is a large empty region \emph{near} the $T''$-geodesic.  Once we have shown this, we can use a lattice animal argument to find exponential tails.

To begin, we will consider a single box, $\mathcal{B}$, and show that $L_\mathcal{B}^\infty$ can be large only if at least one of a set of deterministic boxes is empty.  In order to do this, we consider what region would be empty when the $T''$-geodesic between points $a$ and $b$ is the direct segment $\overline{ab}$.  As per before, define:
\[
\mathcal{W}_{\phi_n}(a, b) = \{c \in \mathbb{R}^d : \phi_n(\lVert a - c\rVert) + \phi_n(\lVert c - b \rVert) \leq \phi_n(\lVert a - b \rVert)\}.
\]
Then, Lemma 5.1 of \cite{Howard:2001} tells us that $\mathcal{W}_{\phi_n}(a, b)$ is closed and convex.  Additionally, Lemma 5.4 from \cite{Howard:2001}, reproduced as Lemma \ref{HN:Regions} below, guarantees that the regions grow at least linearly in volume with respect to $\lVert b - a \rVert$.  However, we will need something slightly stronger: that for large $\lVert b - a \rVert$, the regions $\mathcal{W}_{\phi_n}(a, b)$ (for large $n$) contain a box of arbitrarily large (but constant) width within a fixed distance of $a$. This is implied by the convexity of $\mathcal{W}_{\phi_n}(a, b)$, the statistical isotropy of the Poisson process, and Lemma \ref{E-Regions} below. As illustrated in Figure \ref{fig:E-Rectangle}, due to convexity,
\begin{figure}
\begin{center}
\includegraphics[width=.6\textwidth]{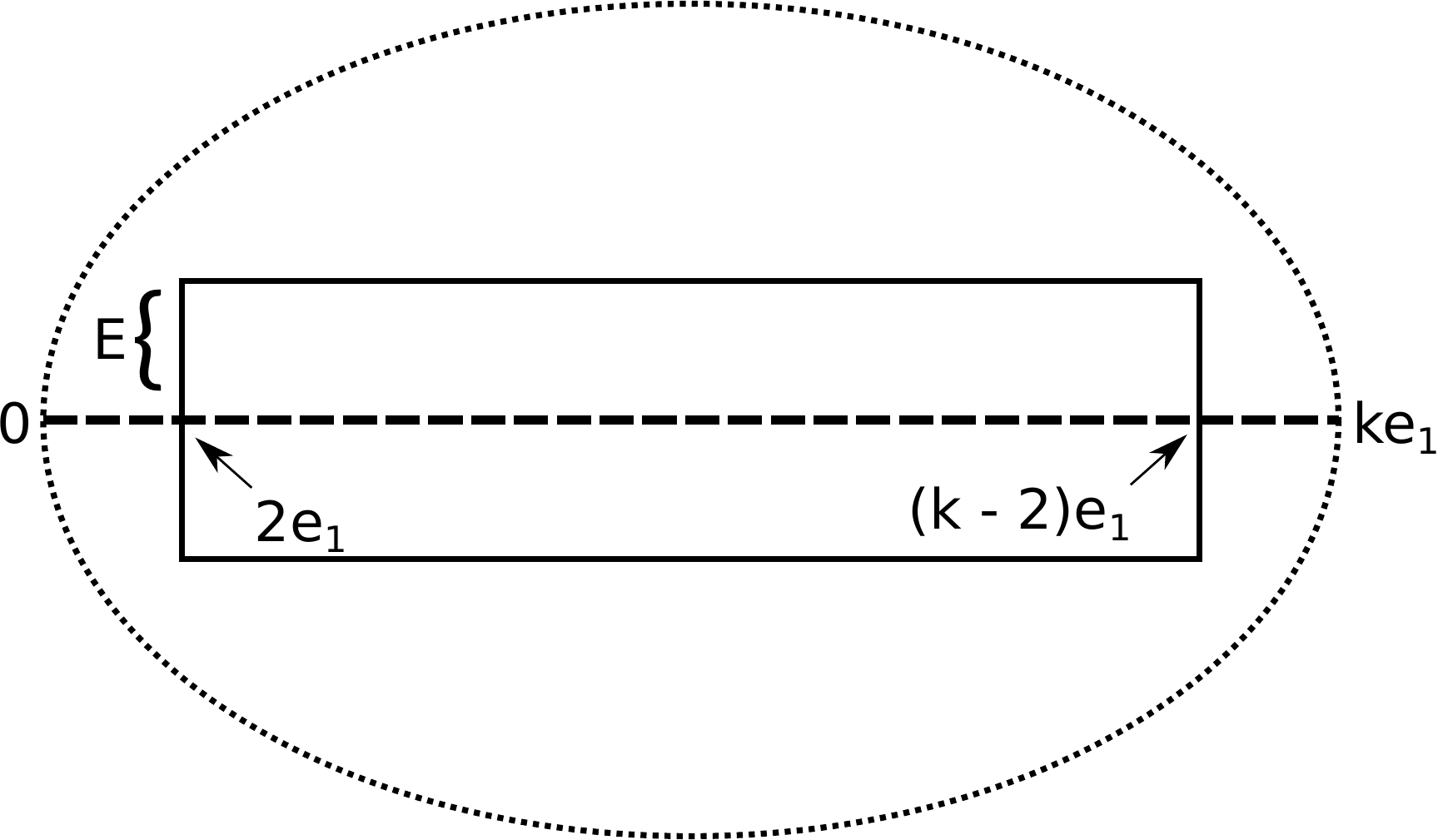}
\end{center}
\caption{Here, the region $\mathcal{W}_{\phi_n}(0, ke_1)$ is illustrated for some large value of $k$.  The region is convex, and thus by Lemma \ref{E-Regions} below, for any $E > 0$, if $k$ is sufficiently large, it must contain the illustrated rectangle with height $2E$ and length $k - 2$.  By symmetry, we can find a similar box in any $\mathcal{W}_{\phi_n}(a, b)$ when $\lVert b - a \rVert$ is sufficiently large.}
\label{fig:E-Rectangle}
\hrule
\end{figure}
 the lemma implies that for any constant $E > 0$ and sufficiently large $n$ and $k$, there is a rectangle with height $2E$ and length $k - 4$ contained inside $\mathcal{W}_{\phi_n}(0, ke_1)$.  Due to symmetry, this can be translated to any points $a$ and $b$, as long as $\lVert b - a \rVert$ is sufficiently large.
 
Now, consider the $\lambda$-grid in $\mathbb{R}^d$.  Choose $E > \sqrt{d} \lambda$, to be restricted further later. Then, the rectangle with height $2E$ contained inside $\mathcal{W}_{\phi_n}(0, ke_1)$ must contain a collection of $\lambda$-boxes that are connected by $(d-1)$-dimensional faces, since $2E > 2\sqrt{d}\lambda$, twice the length of the diagonal of a $\lambda$-box.  Let $\Lambda(a, b)$ be the face-connected set of $\lambda$-boxes contained inside $\mathcal{W}_{\phi_n}(a, b)$ that includes this face-connected set of boxes in the rectangle, and let $\#\Lambda(a, b)$ be the number of such $\lambda$-boxes.  Then, $\#\Lambda(a, b) \geq D_E \lVert b - a \rVert$ for some constant $D_E > 0$ that can be made arbitrarily large as $E$ increases.

Let $C_E$ be a constant such that whenever $\lVert a - b \rVert > C_E$, the rectangle from Figure \ref{fig:E-Rectangle} (translated so that it is along the line segment $\ol{ab}$) is inside $\mathcal{W}_{\phi_n}(a, b)$.  Note that if $\lVert a - b \rVert > C_E$ for $C_E$ sufficiently large, then the box containing the point $3\sqrt{d} \lambda$ units along the line segment $\ol{ab}$ must be in $\Lambda(a, b)$.  A unit line segment can touch at most $d + 1$ unit boxes: starting with one box, the path can touch more boxes corresponding to its movement in each of the $d$ dimensions.  Therefore, the portion of the segment starting at $a$ with length $3\sqrt{d}\lambda$ can touch at most $3\sqrt{d}(d + 1)$ total $\lambda$-boxes. Thus if we define a box path between any two boxes to be a collection of face-adjacent boxes that connect the two boxes, and define the box distance between two boxes to be the one less than the minimum number of boxes in a box path between them, then there is a $\lambda$-box in $\Lambda(a,b)$ within box-distance $3\sqrt{d}(d + 1)$ of the $\lambda$-box covering $a$.
 
If $L_\mathcal{B}^\infty > h_0$, then there is a pair of points $a \in \partial \mathcal{B}$ and $b \in Q_n$ such that $\lVert b - a \rVert > h_0$ and $\mathcal{W}_{\phi_n}(a, b)$ contains no points from $Q_n$.  But, by our reasoning above, this implies that if $h_0$ is large enough, there is a $\lambda$-box within box-distance $3\sqrt{d}(d + 1)$ of the $\lambda$-box containing $a$ that has no Poisson points, and this box is contained in a cluster of empty $\lambda$-boxes $\Lambda(a, b)$ inside $\mathcal{W}_{\phi_n}(a, b)$ such that $\#\Lambda(a, b) \geq D_E \lVert b - a \rVert$.
 
 With this in mind, let $\xi$ denote the collection of $\lambda$-boxes that contain an $\epsilon$-box counted by $M(ne_1)$, along with any $\lambda$-box that is within box-distance $3\sqrt{d}(d + 1)$ of these boxes.  For any $\lambda$ box, $\nu$, let $\# \Lambda_\nu$ represent the number of $\lambda$-boxes in the nearest-neighbor cluster of empty boxes centered at $\nu$.  For each large $L_\mathcal{B}^\infty$, the value of $\#\Lambda(a, b)$ corresponding to this segment equals $\#\Lambda_\nu$ for some $\nu \in \xi$. However, the same $\lambda$-box $\nu$ may correspond to multiple different unit boxes $\mathcal{B}$ used in the geodesic.  As a crude upper bound, note that there are $\lambda^d$ unit boxes within any $\lambda$-box, and the number of $\lambda$ boxes within box-distance $3\sqrt{d}(d + 1)$ of a given $\lambda$-box is at most $(6\sqrt{d}(d + 1) + 1)^d < (7\sqrt{d}(d + 1))^d$.  With this in mind, let $C_\lambda = (7\sqrt{d}(d + 1)\lambda)^d$.  Then, we have: 
 
 \begin{align*}
 S_3 &= \mathbbm{1}_{\{M(ne_1) < \ell\}} \sum_{\mathcal{B} : L_\mathcal{B}^\infty > h_0} \left[(L_\mathcal{B}^\infty)^p \mathbbm{1}_{\{\mathcal{B}\mbox{\scriptsize\ used}\}}(0)\right]\\
 &\leq \mathbbm{1}_{\{M(ne_1) < \ell\}} \sum_{\nu \in \xi} C_\lambda D_E^{-p} (\# \Lambda_\nu)^p.
 \end{align*}
 
 Now, we know that $|\xi|$, the number of boxes in $\xi$, cannot be more than $C_d M(ne_1)$ for some constant $C_d$ depending on the dimension.  Let $\Xi$ be the set of all lattice animals in $\mathbb{Z}^d$ containing the origin, so that $\xi$ can be viewed as an element in $\Xi$.  Thus, following the proof in \cite{Howard:2001}, we can make the division for any $0 < \gamma < 1$:
 \begin{align*}
 \{S_3 > \ell \} &\subset \left\{ |\xi| < \ell^\gamma \mbox{ and } \sum_{\nu \in \xi} (\# \Lambda_\nu)^p > \frac{D_E^p}{C_\lambda} \ell\right\}\\
 & \ \ \ \ \cup \left\{\ell^\gamma \leq |\xi| \leq C_d \ell \mbox{ and } \sum_{\nu \in \xi} (\# \Lambda_\nu)^p > \frac{D_E^p}{C_\lambda} \ell\right\}.
 \end{align*}
 When $|\xi| < \ell^\gamma$ and $\sum_{\nu \in \xi} (\# \Lambda_\nu)^p > \frac{D_E^p}{C_\lambda} \ell$, then by the pigeonhole principle, there is some $\nu \in \xi$ such that $(\#\Lambda_\nu)^p > \frac{D_E^p\ell}{C_\lambda|\xi|} > \frac{D_E^p}{C_\lambda} \ell^{1 - \gamma}$.  On the other hand, when $\ell^\gamma \leq |\xi| \leq C_d\ell$ and $\sum_{\nu \in \xi} (\# \Lambda_\nu)^p > \frac{D_E^p}{C_\lambda} \ell$, then $\frac{1}{|\xi|} \sum_{\nu \in \xi} (\#\Lambda_\xi)^p > \frac{D_E^p\ell}{C_\lambda |\xi|} > \frac{D_E^p}{C_\lambda C_d}$.  Thus,
 \begin{align*}
 \{S_3 > \ell \} & \subset \left\{ \exists \nu \subset [-\lambda \ell^\gamma, \lambda\ell^\gamma]^d \cap \mathbb{Z}^d \mbox{ with }  \# \Lambda_\nu > \frac{D_E}{C_\lambda^{1/p}} \ell^{(1 - \gamma)/p}\right\}\\
 & \ \ \ \ \cup \left\{\exists \xi \in \Xi \mbox{ with } |\xi| \geq \ell^\gamma \mbox{ and } \frac{1}{|\xi|} \sum_{\nu \in \xi} (\# \Lambda_\nu)^p > \frac{D_E^p}{C_\lambda C_d} \right\}.
 \end{align*}
 
Recall that $\lambda$ is large enough that the probability of a $\lambda$-box being empty is below the critical probability for site percolation on $\mathbb{Z}^d$. Then from Theorem 6.75 of \cite{Grimmett:1999}, we can conclude that there exists a $c_4 > 0$ such that for all $\ell$, $\mathbb{P}(\# \Lambda_\nu > \ell) \leq e^{-c_4\ell}$.  In turn, this implies:
\begin{equation*} \label{xiPartOne}
\mathbb{P}\left[ \exists \nu \subset [-\lambda \ell^\gamma, \lambda \ell^\gamma]^d \cap \mathbb{Z}^d \mbox{ with }  \# \Lambda_\nu > D_E \ell^{(1 - \gamma)/p} \right]
\leq (2\ell^\gamma + 1)^d \exp\left(-c_4 D_E \ell^{(1 - \gamma)/p}\right).
\end{equation*}
Next, by Theorem~6.4.2 of \cite{Howard:1999} on lattice animals, if $D_E$ is sufficiently large (in terms of the dimension, $d$, which is possible by taking $E$ sufficiently large), then we have for some $C_5 > 0$ and a possibly smaller $c_4 > 0$ that:
\begin{equation} \label{xiPartTwo}
\mathbb{P}\left[ \exists \xi \in \Xi \mbox{ with } |\xi| \geq \ell^\gamma \mbox{ and } \frac{1}{|\xi|} \sum_{\nu \in \xi} (\# \Lambda_\nu)^p > \frac{D_E^p}{C_d}\right]
\leq C_5 \exp(-c_4\ell^{\gamma/p}).
 \end{equation}
Thus, we conclude that for all $x$ and appropriate constants $C_1, c_2,$ and $c_3$,
 \[
 \mathbb{P}(S_3 > \ell) \leq C_1 \exp\left(-c_2\ell^{c_3}\right).
 \]
 We have found nearly-exponential tails for $S_1, S_2,$ and $S_3$ in Equation \eqref{SBreakdown} and this completes the proof.
\end{proof}
We need to know information about the regions
\[
\mathcal{W}_{\phi_n}(a, b) := \left\{c \in \mathbb{R}^d : \phi_n(\lVert c - a \rVert) + \phi_n(\lVert c - b \rVert) \leq \phi_n(\lVert a - b \rVert)\right\}.
\]
Lemma 5.1 of \cite{Howard:2001} tells us that these regions are closed and convex, and Lemma 5.4, reproduced as Lemma \ref{HN:Regions} below, tells us that they grow at least linearly in volume with respect to $\lVert b - a \rVert$.  However, we will need slightly more specific versions of their lemmas.  In particular, we must know that the regions always contain a box of constant size a constant distance from their endpoints as $\lVert b - a \rVert$ grows, summarized in Lemma \ref{E-Regions}.  This comes up in the analysis of $T''_{\mathcal{B}, \infty}(0, ne_1)$, where we need to analyze regions devoid of Poisson points near (but not in) the box $\mathcal{B}$.  Additionally, we will need that the regions grow polynomially in all dimensions, summarized in Lemma \ref{W-dimensions}.  This will help us analyze $L_{\mbox{\scriptsize max}}$ in Lemma \ref{MaxLengthBound}, because we will know that a $T''$-geodesic has a large jump only when there is a large region devoid of Poisson points.

\begin{lem} \label{E-Regions}
Let $e_1$ and $e_2$ be the unit vectors in the first and second coordinate directions of $\mathbb{R}^d$.  For any $E > 0$, there is a $D > 0$ such that $2e_1 + Ee_2 \in \mathcal{W}_{\phi_n}(0, ke_1)$ for all $h_0, k > D$, where $h_0$ is the constant in the definition of $\phi_n$.
\end{lem}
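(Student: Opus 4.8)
The plan is to unwind the definition of $\mathcal{W}_{\phi_n}$ and reduce the claim to a single one-variable inequality for $\phi_n$. Writing $c = 2e_1 + Ee_2$, we have $\lVert c\rVert = \sqrt{4+E^2}$ and $\lVert c - ke_1\rVert = \sqrt{(k-2)^2+E^2}$, so (since $\lVert ke_1\rVert = k$) the membership $c \in \mathcal{W}_{\phi_n}(0,ke_1)$ is exactly the inequality
\[
\phi_n\big(\sqrt{4+E^2}\big) \;\le\; \phi_n(k) - \phi_n\big(\sqrt{(k-2)^2+E^2}\big).
\]
I would estimate the two sides separately, showing the left side is a constant depending only on $E$ and $\alpha$, while the right side can be made as large as we like by taking $D$ large.

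For the left side: since $t\mapsto t^\alpha$ is convex for $\alpha>1$, the affine piece in the definition of $\phi_n$ is exactly its tangent line at $h_n$ and hence lies below the curve, so $\phi_n(t)\le t^\alpha$ for all $t\ge 0$. Thus $\phi_n(\sqrt{4+E^2}) \le (4+E^2)^{\alpha/2} =: A$, a constant independent of $n$, $h_0$, and $k$.

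For the right side I would use two elementary facts. First, a direct computation gives $k - \sqrt{(k-2)^2+E^2} = (4k-4-E^2)/(k + \sqrt{(k-2)^2+E^2})$, and bounding the denominator by $2k + E^2/2$ for $k\ge 1$ shows this difference is at least $1$ once $k>D$ with $D\ge 2+\tfrac34 E^2$; in particular $s := \sqrt{(k-2)^2+E^2}<k$, and also $s\ge k-2>D-2$. Second, $\phi_n$ is $C^1$ and convex, with $\phi_n'(t) = \alpha\min(t,h_n)^{\alpha-1}$ non-decreasing in $t$ (this is where $\alpha>1$ enters). Hence
\[
\phi_n(k) - \phi_n(s) = \int_s^k \phi_n'(t)\,dt \;\ge\; (k-s)\,\phi_n'(s) \;\ge\; \alpha\,\min(s,h_n)^{\alpha-1},
\]
using $k-s\ge 1$. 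Since $s>D-2$ and $h_n\ge h_0>D$, we have $\min(s,h_n)>D-2$, so the right-hand side is at least $\alpha(D-2)^{\alpha-1}$.

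Finally I would choose $D=D(E,\alpha)$ large enough that $\alpha(D-2)^{\alpha-1}\ge A=(4+E^2)^{\alpha/2}$ (possible because $\alpha>1$) and also $D\ge 2+\tfrac34 E^2$; then for all $h_0,k>D$ and all $n$ one gets $\phi_n(\sqrt{4+E^2})\le A\le\alpha(D-2)^{\alpha-1}\le\phi_n(k)-\phi_n(\sqrt{(k-2)^2+E^2})$, which is the assertion. I do not expect a genuine obstacle here: the only point requiring care is that the inequality must hold uniformly in $n$ even though $h_n$ grows with $n$, and the convexity observation — which furnishes the uniform slope bound $\phi_n'(s)\ge\alpha\min(s,h_n)^{\alpha-1}$ — is precisely what renders the piecewise structure of $\phi_n$ harmless; the rest is bookkeeping with the constant $D$.
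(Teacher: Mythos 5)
Your proof is correct, but it takes a genuinely different and more streamlined route than the paper. The paper reduces to the same inequality $\phi_n(\sqrt{4+E^2}) + \phi_n(\sqrt{(k-2)^2+E^2}) \leq \phi_n(k)$ and then splits into three cases according to where $\sqrt{(k-2)^2+E^2}$ and $k$ sit relative to the cutoff $h_n$: in the case where both lie below $h_n$ it shows $k^\alpha - \left[(k-2)^2+E^2\right]^{\alpha/2} \to \infty$ via a L'H\^opital computation, and in the other two cases it argues by separate algebraic manipulations involving $\alpha h_n^{\alpha-1}\left[k - \sqrt{(k-2)^2+E^2}\right]$, with subcases. You instead exploit the global structure of $\phi_n$: it is $C^1$ and convex with $\phi_n'(t) = \alpha\min(t,h_n)^{\alpha-1}$, so once you know $k - \sqrt{(k-2)^2+E^2} \geq 1$ (your computation with $D \geq 2+\tfrac34E^2$ is fine), the mean-value/integral bound $\phi_n(k)-\phi_n(s) \geq \phi_n'(s) \geq \alpha\min(s,h_n)^{\alpha-1} \geq \alpha(D-2)^{\alpha-1}$ handles all positions of $h_n$ at once, and $\phi_n \leq t^\alpha$ (the affine piece being the tangent line at $h_n$) bounds the left side by $(4+E^2)^{\alpha/2}$. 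What your approach buys is the elimination of the case analysis, an explicit choice of $D = D(E,\alpha)$, and a transparent explanation of why the estimate is uniform in $n$ (only $h_n \geq h_0$ is used); what the paper's approach buys is essentially nothing extra here beyond staying entirely within elementary manipulations of the two branches of $\phi_n$, so your argument could substitute for it without loss.
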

\begin{proof}
We must show that for all $k$ and $h_0$ sufficiently large,
\[
\phi_n\left(\sqrt{4 + E^2}\right) + \phi_n\left(\sqrt{(k - 2)^2 + E^2}\right) \leq \phi_n(k).
\]
If $h_0$ is large, we can assume that $\sqrt{4 + E^2} \leq h_0 \leq h_n$, the cutoff in the piecewise function $\phi_n$, because $h_n \geq h_0$.  Additionally, we can assume $\sqrt{(k - 2)^2 + E^2} \leq k$ for $k$ sufficiently large.  As a result, we will analyze three cases, depending on how $h_n$ compares to $\sqrt{(k - 2)^2 + E^2}$ and $k$.
\ \\

\hrule\ 

{\bf Case 1:} Assume $\sqrt{(k - 2)^2 + E^2}, k \leq h_n$.  Then, note that if $y = 1/k$, we can use L'H\^{o}pital's Rule to obtain:
\begin{align*}
\lim_{k \to \infty} k^\alpha - [(k - 2)^2 + E^2]^{\frac{\alpha}{2}} &= \lim_{y \to 0^+} \frac{1 - [(1 - 2y)^2 + y^2E^2]^\frac{\alpha}{2}}{y^\alpha}\\
&= \lim_{y \to 0^+} \frac{-\frac{\alpha}{2}[(1 - 2y)^2 + y^2E^2]^{\frac{\alpha}{2} - 1} [-4(1 - 2y) + 2yE^2]}{\alpha y^{\alpha - 1}}\\
&= \infty.
\end{align*}
This implies that the difference between the two terms can be arbitrarily large as $k$ approaches infinity.  As a result, for $k$ sufficiently large depending on $E$,
\begin{align*}
\phi_n\left(\sqrt{4 + E^2}\right) + \phi_n\left(\sqrt{(k - 2)^2 + E^2}\right) &= (4 + E^2)^\frac{\alpha}{2} + \big((k - 2)^2 + E^2\big)^{\frac{\alpha}{2}}\\
&\leq k^\alpha = \phi_n(k).
\end{align*}
\hrule\ 

{\bf Case 2:} Assume $\sqrt{(k - 2)^2 + E^2} \leq h_n \leq k$.  We break into two subcases.  First, assume $[k - h_n] \alpha h_n^{\alpha - 1} \geq (4+E^2)^{\alpha/2}$.  Then,
\begin{align*}
\phi_n\left(\sqrt{4 + E^2}\right) + \phi_n\left(\sqrt{(k - 2)^2 + E^2}\right) &= (4 + E^2)^\frac{\alpha}{2} + \big((k - 2)^2 + E^2\big)^{\frac{\alpha}{2}}\\
&\leq [k - h_n] \alpha h_n^{\alpha - 1} + h_n^\alpha\\
&= \phi_n(k)
\end{align*}

So, we turn to the other subcase, where $[k - h_n] \alpha h_n^{\alpha - 1} \leq (4 + E^2)^{\alpha/2}$.  We rearrange to obtain:
\[
k - h_n \leq \frac{(4 + E^2)^{\alpha/2}}{\alpha h_n^{\alpha - 1}}.
\]
We note that for $k$ sufficiently large, $\sqrt{(k - 2)^2 + E^2} \leq k - 1$, so that $k - \sqrt{(k - 2)^2 + E^2} \geq 1$.  Combining this with our previous inequality implies:
\[
h_n - \sqrt{(k - 2)^2 + E^2} \geq 1 - \frac{(4 + E^2)^{\alpha/2}}{\alpha h_n^{\alpha - 1}}.
\]
We can use this to bound $\sqrt{(k - 2)^2 + E^2}$, and obtain:
\begin{align*}
\phi_n\left(\sqrt{4 + E^2}\right) + \phi_n\left(\sqrt{(k - 2)^2 + E^2}\right) &= (4 + E^2)^\frac{\alpha}{2} + \big((k - 2)^2 + E^2\big)^{\frac{\alpha}{2}}\\
&\leq (4 + E^2)^\frac{\alpha}{2} + \left[ h_n - 1 + \frac{(4 + E^2)^{\alpha/2}}{\alpha h_n^{\alpha - 1}}\right]^\alpha.
\end{align*}
By choosing $h_0 < h_n$ sufficiently large depending on $E$, this is bounded by:
\[
\left(h_n - \frac{1}{2}\right)^\alpha < h_n^\alpha + \alpha h_n^{\alpha - 1}[k - n] = \phi_n(k).
\]
\hrule\

{\bf Case 3:} Assume $\sqrt{(k - 2)^2 + E^2}, k \geq h_n$.  Then, using again that for $k$ sufficiently large, $k - \sqrt{(k - 2)^2 + E^2} \geq 1$, we see that for $h_0$ sufficiently large,
\[
(4 + E^2)^{\frac{\alpha}{2}} \leq \alpha h_n^{\alpha - 1} \left[k - \sqrt{(k - 2)^2 + E^2}\right],
\]
since $h_0 < h_n$.  Thus, for $k$ and $h_0$ sufficiently large depending on $E$,
\[
(4 + E^2)^{\frac{\alpha}{2}} + h_n^\alpha + \alpha h_n^{\alpha - 1} \big[\sqrt{(k - 2)^2 + E^2} - h_n \big] \leq h_n^\alpha + \alpha h_n^{\alpha - 1}[k - h_n],
\]
which is equivalent to $\phi_n\left(\sqrt{4 + E^2}\right) + \phi_n\left(\sqrt{(k - 2)^2 + E^2}\right) \leq \phi_n(k)$.  This completes the proof.

\end{proof}

\begin{lem} \label{W-dimensions}
Let $e_1$ and $e_2$ be the unit vectors in the first and second coordinate directions of $\mathbb{R}^d$.  There exists $c > 0$ such that for all $\ell \geq 1/2$ and for all $n$,
\[
\ell e_1 \pm c \sqrt{\ell} e_2 \in \mathcal{W}_{\phi_n}(0, 2\ell e_1).
\]
\end{lem}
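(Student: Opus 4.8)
The plan is to reduce the membership statement to a single scalar inequality about $\phi_n$ and then verify that inequality by an elementary case analysis dictated by the two pieces of $\phi_n$. By reflecting across the hyperplane $\{x_2 = 0\}$ it suffices to treat the ``$+$'' sign, so fix $\ell \ge 1/2$ and put $p = \ell e_1 + c\sqrt{\ell}\, e_2$. Both $p$ and $2\ell e_1 - p = -\ell e_1 + c\sqrt{\ell}\, e_2$ have norm $s := \sqrt{\ell^2 + c^2\ell}$, so $\lVert p - 0\rVert = \lVert p - 2\ell e_1\rVert = s$, and the desired conclusion $p \in \mathcal{W}_{\phi_n}(0, 2\ell e_1)$ is precisely
\[
2\,\phi_n(s) \le \phi_n(2\ell).
\]
Hence the whole lemma reduces to producing a single constant $c = c(\alpha) > 0$ for which this inequality holds for every $\ell \ge 1/2$ and every $n$ (equivalently, every admissible $h_n \ge h_0 \ge 1$).

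I would first collect the elementary inputs: $\phi_n$ is continuous, increasing and convex with $\phi_n(0) = 0$, equal to $t \mapsto t^\alpha$ on $[0,h_n]$ and affine with slope $\alpha h_n^{\alpha - 1}$ on $[h_n, \infty)$. Since $s^2 - \ell^2 = c^2\ell$ and $s + \ell \ge 2\ell$ we get $0 \le s - \ell \le c^2/2$, and (using $\ell \ge 1/2$) for $c$ small this forces $\ell \le s < 2\ell$. Because $s < 2\ell$, exactly one of three situations occurs: (i) $2\ell \le h_n$; (ii) $s \le h_n < 2\ell$; (iii) $h_n < s$. The underlying idea is that at $c = 0$ one has $s = \ell$, and the target $2\phi_n(\ell) \le \phi_n(2\ell)$ then holds with strict slack quantified by $\alpha > 1$ in each regime, so it survives a small perturbation in $c$.

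In case (i) both arguments lie in the power regime and the inequality becomes $(1 + c^2/\ell)^{\alpha/2} \le 2^{\alpha - 1}$, which (via $\ell \ge 1/2$) follows from $(1 + 2c^2)^{\alpha/2} \le 2^{\alpha - 1}$; this holds for $c$ small since $2^{\alpha - 1} > 1$. In case (iii) both arguments lie in the affine regime; plugging in the affine formula and cancelling, $2\phi_n(s) \le \phi_n(2\ell)$ is equivalent to $(\alpha - 1) h_n \ge 2\alpha(s - \ell)$, which, using $s - \ell \le c^2/2$ and $h_n \ge 1$, holds once $c^2 \le (\alpha - 1)/\alpha$. Case (ii), the transitional one, is the main obstacle and needs a little more care: here $\phi_n(s) = s^\alpha$ but $\phi_n(2\ell) = h_n^\alpha + \alpha h_n^{\alpha - 1}(2\ell - h_n)$. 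Dividing through by $h_n^\alpha$, writing $u := \ell/h_n \in (1/2, 1]$ (the bounds on $u$ coming from $s \le h_n$ and $h_n < 2\ell$), and bounding $(s/h_n)^\alpha \le (1 + 2c^2)^{\alpha/2} u^\alpha$, it suffices to show
\[
g(u) := 1 - \alpha + 2\alpha u - 2(1 + 2c^2)^{\alpha/2}\, u^\alpha \ \ge\ 0 \qquad \text{for } u \in [\tfrac12, 1].
\]
Since $g$ is concave on $[\tfrac12,1]$, its minimum there is attained at an endpoint, and $g(1) = 1 + \alpha - 2(1 + 2c^2)^{\alpha/2}$ and $g(\tfrac12) = 1 - 2^{1-\alpha}(1 + 2c^2)^{\alpha/2}$ are both positive for $c$ small because $\alpha > 1$ (their $c = 0$ values are $\alpha - 1 > 0$ and $1 - 2^{1-\alpha} > 0$). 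Choosing $c = c(\alpha)$ small enough to meet the finitely many smallness conditions arising across cases (i)--(iii) then finishes the proof, the ``$-$'' case being identical by the reflection symmetry.
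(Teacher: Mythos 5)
Your proposal is correct and follows essentially the same route as the paper: reduce membership in $\mathcal{W}_{\phi_n}(0,2\ell e_1)$ to the scalar inequality $2\phi_n(s)\leq\phi_n(2\ell)$ with $s=\sqrt{\ell^2+c^2\ell}$, use $s-\ell\leq c^2/2$, and split into the same three cases according to where $s$ and $2\ell$ sit relative to $h_n$. The only variation is in the transitional case $s\leq h_n<2\ell$, where the paper minimizes the difference over $h_n$ (via its derivative) while you normalize by $h_n^\alpha$ and check endpoints of a concave function of $u=\ell/h_n$; both are valid and yield the same $\alpha$-dependent smallness conditions on $c$.
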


\begin{proof}
Let $k = c \sqrt{\ell}.$  By the definition of $\mathcal{W}_{\phi_n}$, we need to show the following:
\[
2\phi_n\left( \sqrt{\ell^2 + k^2}\right) \leq \phi_n(2\ell).
\]
Much like the proof of Lemma \ref{E-Regions}, we will break into cases depending on the relationship between $\sqrt{\ell^2 + k^2}, 2\ell$, and $h_n$, the cutoff in the definition of $\phi_n$.  Throughout all of these cases, it will be helpful to recognize that whenever $k/\ell < 1$ we have:
\begin{equation} \label{sqrt-eqn}
\sqrt{\ell^2 + k^2} = \ell \sqrt{1 + \frac{k^2}{\ell^2}} \leq \ell\left(1 + \frac{k^2}{2\ell^2}\right) = \ell + \frac{k^2}{2\ell}.
\end{equation}

\hrule\

{\bf Case 1:}  Assume $2\ell \leq h_n$.  Then, for $c$ sufficiently small, since $k = c\sqrt{\ell}$,
\[
2\phi_{n}\left(\sqrt{\ell^2 + k^2}\right) = 2\left(\sqrt{\ell^2 + k^2} \right)^\alpha \leq 2 \left(\ell + \frac{k^2}{2\ell} \right)^\alpha < 2\left(2^{(\alpha - 1)/\alpha} \ell\right)^\alpha = 2^\alpha\ell^\alpha = \phi_{n}(2\ell),
\]
since we can force $k^2/2\ell \leq (2^{(\alpha - 1)/\alpha} - 1) \ell$ for all $\ell \geq 1/2$ if we pick $c$ in the definition of $k = c\sqrt{\ell}$ sufficiently small.
\ \\

\hrule\ 

{\bf Case 2:}  Now, assume $\sqrt{\ell^2 + k^2} \leq h_n \leq 2 \ell$.  We examine:
\[
\phi_n(2\ell) - 2 \phi_n\left(\sqrt{\ell^2 + k^2}\right) = h_n^\alpha(1 - \alpha) + 2\alpha \ell h_n^{\alpha - 1} - 2\left(\sqrt{\ell^2 + k^2}\right)^\alpha.
\]
Viewing this as a function of $h_n$, we have:
\[
\frac{\partial}{\partial h_n}\left[\phi_n(2\ell) - 2 \phi_n\left(\sqrt{\ell^2 + k^2}\right)\right] = \alpha(1 - \alpha) h_n^{\alpha - 1} + 2\ell\alpha (\alpha - 1)h_n^{\alpha - 2} = \alpha (\alpha - 1) h_n^{\alpha - 2}(2\ell - h_n).
\]
This quantity is non-negative, since $\alpha > 1$ and $2\ell \geq h_n$ by assumption.  Thus, $\phi_n(2\ell) - 2 \phi_n\left(\sqrt{\ell^2 + k^2}\right)$ is minimized in this case when $h_n = \sqrt{\ell^2 + k^2}$.  Thus,
\begin{align}
\phi_n(2\ell) - 2 \phi_n\left(\sqrt{\ell^2 + k^2}\right) &\geq \left(\sqrt{\ell^2 + k^2}\right)^\alpha + \alpha \left(\sqrt{\ell^2 + k^2}\right)^{\alpha - 1} \left[2\ell - \sqrt{\ell^2 + k^2}\right] - 2\left(\sqrt{\ell^2 + k^2}\right)^\alpha \nonumber \\
&= \alpha \left(\sqrt{\ell^2 + k^2}\right)^{\alpha - 1} \left[2\ell - \sqrt{\ell^2 + k^2}\right] - \left(\sqrt{\ell^2 + k^2}\right)^\alpha. \label{Case2-WBound}
\end{align}
Now, rearranging Equation \eqref{sqrt-eqn} gives:
\[
2\ell \geq 2 \sqrt{\ell^2 + k^2} - \frac{k^2}{\ell} > \left(\frac{1}{\alpha} + 1\right) \sqrt{\ell^2 + k^2},
\]
where the last inequality is true for all $\ell \geq 1/2$ as long as $c$ in the definition $k = c\sqrt{\ell}$ is sufficiently small, since $k^2/\ell$ can then be made smaller than $\left(1 - \frac{1}{\alpha}\right) \sqrt{\ell^2 + k^2}$.  This implies that $2\ell - \sqrt{\ell^2 + k^2} \geq \frac{1}{\alpha} \sqrt{\ell^2 + k^2}.$  Plugging this into Equation \eqref{Case2-WBound} gives for $c > 0$ sufficiently small:
\[
\phi_n(2\ell) - 2 \phi_n\left(\sqrt{\ell^2 + k^2}\right) > \alpha \left(\sqrt{\ell^2 + k^2}\right)^{\alpha - 1} \cdot \frac{1}{\alpha} \sqrt{\ell^2 + k^2} - \left(\sqrt{\ell^2 + k^2}\right)^\alpha = 0.
\]
This proves that $2\phi_n\left( \sqrt{\ell^2 + k^2}\right) \leq \phi_n(2\ell)$ in this case.
\ \\

\hrule\ 

{\bf Case 3:}  Finally, assume $\ell^2, \sqrt{\ell^2 + k^2} > h_n$.  Then,
\begin{align}
\phi_n(2\ell) - 2\phi_n\left(\sqrt{\ell^2 + k^2}\right) &= h_n^\alpha + \alpha h_n^{\alpha - 1}(2\ell - h_n) - 2 \left[h_n^\alpha + \alpha h_n^{\alpha - 1}\left(\sqrt{\ell^2 + k^2} - h_n\right) \right] \nonumber \\
&= (\alpha - 1) h_n^\alpha - 2\alpha h_n^{\alpha - 1} \left[\sqrt{\ell^2 + k^2} - \ell\right]. \label{Case3-Wbound}
\end{align}
We want to show that this quantity is positive for $c > 0$ sufficiently small in the definition $k = c\sqrt{\ell}$.  Notice that by Equation \eqref{sqrt-eqn}, we have for all $\ell \geq 1/2$ and for $c$ sufficiently small,
\[
\sqrt{\ell^2 + k^2} - \ell \leq \frac{k^2}{2\ell} < \frac{\alpha - 1}{2\alpha} h_0,
\]
where $h_0$ appears in the definition of $h_n$ and $\phi_n$.  Since $h_0 < h_n$ for all $n$, this condition implies
\[
(\alpha - 1) h_n^\alpha > 2\alpha h_n^{\alpha - 1} \left(\sqrt{\ell^2 + k^2} - \ell\right),
\]
which is exactly what was needed to show that $\phi_n(2\ell) - 2 \phi_n\left(\sqrt{\ell^2 + k^2}\right) > 0$ in Equation \eqref{Case3-Wbound}.  This completes the proof.

\end{proof}

%
%
%
%
\section{Appendix of cited results} \label{Appendix}

\subsection{Results on $T''$ and $T$.}
The following two results from \cite{Howard:2001} (Lemmas 3.1 and 3.3) describe some relationships between $T(0, ne_1)$ and $T''(0, ne_1)$, as well as a connection to an intermediary distance, $T'(0, ne_1)$.  $T'(a, b)$ is defined the same way as $T(a, b)$, except that Poisson points are added at $a$ and $b$. Here, as above, $T$ will be shorthand for $T(0, ne_1)$, with the same convention for $T'$ and $T''$. In order to prove that $T''$ is often equal to $T'$, the authors in \cite{Howard:2001} chose an $\epsilon$ small enough in the definition of $T''$ and $Q_n$ so that $17 \epsilon \sqrt{d}$ was smaller than the critical radius for continuum percolation.
\begin{lem} \label{TDifferences}
For some constants $C_1$ and $c_2 > 0$, we have:
\[
\mathbb{P}\left[|T- T'| > x\right] \leq C_1 \exp\left(-c_2 x^{d/\alpha}\right)
\]
 for all $x > 0$. For $\epsilon > 0$ sufficiently small in the definition of $T''$ and $Q_n$,
\[
\mathbb{P}\left[T' \neq T''\right] \leq C_1 \exp\left(-c_2 n^{1/(2\alpha)}\right).
\]
\end{lem}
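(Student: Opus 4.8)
The plan is to follow \cite{Howard:2001}, where this appears as Lemma~3.1; I outline the two arguments. Throughout, write $\rho_0 := \lVert q(0) - 0 \rVert$ and $\rho_n := \lVert q(ne_1) - ne_1 \rVert$ for the Euclidean distances from the two endpoints to the nearest Poisson point. Since a region of volume $v$ contains no Poisson point with probability $e^{-v}$, one has $\mathbb{P}(\rho_0 > t) = \mathbb{P}(\rho_n > t) = e^{-\omega_d t^d}$, where $\omega_d$ is the volume of the unit ball; this Poisson estimate, combined with the substitution $t = x^{1/\alpha}$, is where the exponent $d/\alpha$ in the first claim comes from, while the scale $n^{1/(2\alpha)}$ in the second claim will be traced back to the cutoff $h_n$ (which is of order $n^{1/(2\alpha)}$) in the definition of $\phi_n$.

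For the first bound, the idea is that $T = T(q(0), q(ne_1))$ and $T' = T'(0, ne_1)$ can differ only through behavior localized at the two endpoints. One direction is immediate: prepending the segment $0 \to q(0)$ and appending $q(ne_1) \to ne_1$ to a $T$-geodesic produces a $T'$-path, so $T' \leq T + \rho_0^\alpha + \rho_n^\alpha$. For the reverse direction I would take the $T'$-geodesic $0 = r_0, r_1, \dots, r_k = ne_1$, splice its interior vertices onto $q(0)$ and $q(ne_1)$, and apply $\lVert x - z \rVert^\alpha \leq 2^{\alpha-1}(\lVert x - y \rVert^\alpha + \lVert y - z \rVert^\alpha)$ at each end to obtain $T \leq T' + C(\rho_0^\alpha + \rho_n^\alpha + \lVert r_0 - r_1 \rVert^\alpha + \lVert r_{k-1} - r_k \rVert^\alpha)$, handling the degenerate single-segment case separately and easily. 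The one remaining input is that the first and last segments of the $T'$-geodesic have, up to constants, the same $e^{-ct^d}$-type tails as $\rho_0$ and $\rho_n$: by the mechanism underlying Lemmas \ref{SegmentBound} and \ref{MaxLengthBound}, a geodesic leaves $0$ by a jump of length $L$ only when a neighborhood of $0$ of comparable size is Poisson-free. Combining, $|T - T'| \leq C(\rho_0^\alpha + \rho_n^\alpha + \text{lower order})$, and the Poisson tail estimate then gives $\mathbb{P}(|T - T'| > x) \leq C_1 e^{-c_2 x^{d/\alpha}}$.

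For the second bound, I would construct a good event of probability at least $1 - C_1 e^{-c_2 n^{1/(2\alpha)}}$ on which the $T'$- and $T''$-geodesics coincide, so that $T' = T''$. Three things must be ruled out. (i) Some $\epsilon/3^n$-box inside $[-Cn, Cn]^d$ --- a region containing all relevant geodesics with probability $1 - C_1 e^{-c_2 n}$ by Lemma \ref{BoxBound} --- holds two or more Poisson points; a union bound over the roughly $(Cn\, 3^n/\epsilon)^d$ such boxes bounds this probability by $\lesssim (Cn\epsilon/3^n)^d$, which is super-exponentially small, and off this event $Q_n = Q$ throughout, so the thinning is invisible. (ii) Some segment of the $T'$- or $T''$-geodesic exceeds $h_n$; via Lemma \ref{W-dimensions} such a segment forces a Poisson-free convex region of volume of order $h_n^{(d+1)/2}$, so the lattice-animal argument of Lemma \ref{SegmentBound} bounds this probability by $C_1 e^{-c_2 n^{c_3}}$ with $c_3 = (d+1)/(4\alpha) > 1/(2\alpha)$ (using $d \geq 2$), and off this event $\phi_n$ agrees with $\lVert \cdot \rVert^\alpha$ along every segment used. (iii) A subcritical continuum-percolation event fails --- this is why $\epsilon$ is taken so small that $17\epsilon\sqrt{d}$ lies below the critical radius for continuum percolation --- which rules out, up to probability $C_1 e^{-c_2 n^{1/(2\alpha)}}$, the configurations in which deleting the non-leftmost Poisson points near the $T'$-geodesic (or bridging an $h_n$-scale gap) would actually yield a strictly shorter $Q_n$-path. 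I expect step (iii) to be the main obstacle: it is where the exponent $1/(2\alpha)$ is genuinely forced, and it requires coupling the convex-region geometry at scale $h_n \asymp n^{1/(2\alpha)}$ with the exponential cluster-size bounds of subcritical continuum percolation, exactly as in \cite{Howard:2001}.
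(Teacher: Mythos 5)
You should first note that the paper itself offers no proof of this statement: Lemma~\ref{TDifferences} is imported verbatim from \cite{Howard:2001} (their Lemma~3.1), so your sketch can only be measured against that source, not against anything internal to this paper. For the first bound your skeleton (splice the $T'$-geodesic onto $q(0)$ and $q(ne_1)$; control the error by $\rho_0,\rho_n$ and the first and last segment lengths) is the right one, but the tail you assert for those segments is not delivered by the mechanism you cite. Lemmas~\ref{SegmentBound} and \ref{MaxLengthBound} concern $T''$, i.e.\ the cost $\phi_n$, and the empty regions they exploit (via Lemma~\ref{W-dimensions}) have transverse width only of order $\sqrt{L}$, hence volume of order $L^{(d+1)/2}$; that yields $\mathbb{P}(L_1>t)\lesssim e^{-ct^{(d+1)/2}}$ and therefore only the exponent $(d+1)/(2\alpha)<d/\alpha$ in $x$. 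To recover $d/\alpha$ you must use that for the pure $\alpha$-power cost governing $T'$, strict convexity ($\alpha>1$) makes the lens $\{a:\|a\|^\alpha+\|a-r_1\|^\alpha\le\|r_1\|^\alpha\}$ have width \emph{proportional} to $\|r_1\|$ (at the midpoint an offset $h$ is admissible whenever $h^2\le(2^{-2/\alpha}-\tfrac14)\|r_1\|^2$), so a jump of length $L$ forces an empty region of volume of order $L^d$, matching the $e^{-ct^d}$ tail of $\rho_0$. You should also say a word about the regime of very large $x$ and the degenerate single-segment geodesic (the deterministic bound $T'\le n^\alpha$ and the $e^{-cn^d}$ cost of an empty lens of length $n$ handle it), since the claim is uniform in $n$ and for all $x>0$.

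For the second bound, your steps (i) and (ii), together with the localization of both geodesics in $[-Cn,Cn]^d$ that you already invoke, in fact close the argument by themselves: once no $\epsilon/3^n$-box in that region is multiply occupied, $Q_n$ and $Q$ coincide there, and once no segment of either geodesic exceeds $h_n$, the $T''$-geodesic is a $T'$-path of the same cost and the $T'$-geodesic is a $T''$-path of the same cost, giving $T'=T''$. So the exponent $n^{1/(2\alpha)}$ is forced by $h_n\asymp n^{1/(2\alpha)}$ through (ii), not by your step (iii); in \cite{Howard:2001} the requirement that $17\epsilon\sqrt d$ be below the continuum-percolation critical radius enters the surrounding machinery (e.g.\ the tail bound for the number of $\epsilon$-boxes visited, their Eq.~(3.22), quoted here as \eqref{MEpsilonBound}), not as a separate event about ``strictly shorter $Q_n$-paths.'' Declaring (iii) the main obstacle and leaving it as a black box is therefore the one genuine gap in your write-up, and it is a gap you can simply delete rather than fill. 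Two small repairs in (ii): for segments of the $T'$-geodesic do not quote Lemma~\ref{W-dimensions} (stated for $\phi_n$) but the fat pure-power lens above, and include the polynomial union bound over locations, which does not affect the stretched-exponential rate.
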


\begin{lem} \label{TTails}
Let $\kappa = \min(1, d/\alpha)$.  Then, there exist constants $C_1$ and $c_2 > 0$ such that, for $S = T, T'$, or $T''$, $\mathbb{P}[S > x] \leq C_1 \exp\left(-c_2 x^\kappa \right)$ for all $x \geq C_1 n$.
\end{lem}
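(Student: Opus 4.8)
This is \cite[Lemma~3.3]{Howard:2001}, so in the paper it may simply be quoted; for completeness I sketch how one would prove it directly. The plan is, in each Poisson configuration, to exhibit an explicit path from $0$ to $ne_1$ and bound the tail of its passage time, which dominates the corresponding $S$. The construction below works verbatim for $T$ (using points of $Q$) and, since $\phi_n(t)\le t^\alpha$, for $T''$ (using points of $Q_n$, with each chosen point replaced by the surviving point of its $\epsilon/3^n$-box, a displacement $\le\sqrt d\,\epsilon/3^n$ that affects the total length only negligibly); the bound for $T'$ then follows from that for $T$ via $\mathbb{P}[T'>x]\le\mathbb{P}[T>x/2]+\mathbb{P}[|T-T'|>x/2]$ together with the estimate $\mathbb{P}[|T-T'|>x]\le C_1e^{-c_2x^{d/\alpha}}$ from Lemma~\ref{TDifferences} and the fact that $\kappa\le d/\alpha$.

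Fix a universal constant $s_0$ so large that an $s_0$-box contains no point of $Q$ with probability below the site-percolation threshold $p_c(\mathbb{Z}^d)$, tile $\mathbb{R}^d$ into $s_0$-boxes, and call a box \emph{full} if it contains a point of $Q$. I would build the path greedily and monotonically in the $e_1$-direction: starting from $q(0)$, from the current point $p$ jump to the point of $Q$ lying in the open forward half-space $\{z:\langle z-p,e_1\rangle>0\}$ that is closest to $p$, stopping once within distance $1$ of $ne_1$. The two estimates to establish are: (i) off an event of probability $\le C_1e^{-c_2n}$ the path stays within $[-C_1n,C_1n]^d$ and its total $\alpha$-length $\sum_i\lVert p_{i+1}-p_i\rVert^\alpha$ is at most $C_1n$ --- a subcritical-percolation / greedy-lattice-animal count on the empty $s_0$-boxes near the segment, in the spirit of Lemma~\ref{BoxBound} but not logically depending on it; and (ii) the crucial tail: a jump of length $\ell$ occurs only when the forward half-ball of radius $\ell$ about the current point is empty of $Q$, a region of volume $\asymp\ell^d$, so $\mathbb{P}[\text{a given jump has length}>\ell]\le Ce^{-c\ell^d}$ and hence $\mathbb{P}[\text{a given jump costs}>t]\le Ce^{-ct^{d/\alpha}}$, uniformly over the (random) history.

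Granting (i) and (ii), the tail of the path's passage time $\sum_i\lVert p_{i+1}-p_i\rVert^\alpha$ splits according to the familiar heavy-tail / light-tail dichotomy that yields $\kappa=\min(1,d/\alpha)$. If $d/\alpha\ge 1$ the individual jump costs have finite exponential moments, so a Chernoff bound on a sum of at most $C_1n$ of them gives $\mathbb{P}[\sum>x]\le C_1e^{-c_2x}$ for $x\ge C_1n$. If $d/\alpha<1$ the jump costs are genuinely heavy-tailed (stretched exponential of shape $d/\alpha$), and a ``one big jump'' large-deviation bound gives $\mathbb{P}[\sum_{i\le C_1n}X_i>x]\le C_1n\,\mathbb{P}[X_1>cx]+(\text{moderate-deviation term})\le C_1e^{-c_2x^{d/\alpha}}$ for $x\ge C_1n$, the polynomial prefactor being absorbed into the constants. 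In both cases $\mathbb{P}[\text{path passage time}>x]\le C_1e^{-c_2x^\kappa}$, and therefore so is $\mathbb{P}[S>x]$ for $S=T,T',T''$ and $x\ge C_1n$.

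I expect the main obstacle to be the path construction together with estimate (i): since the path is adaptive, the per-jump bound (ii) must hold uniformly over the observed history, and one must simultaneously bound the path's total length and rule out pathological geometry --- large clusters of empty $s_0$-boxes, long excursions away from the segment --- by lattice-animal / percolation counting. This is precisely the technical core of \cite[\S3]{Howard:2001}. Once it is in hand, passing from the jump-cost tails to the stated inequality is only the short dichotomy computation above.
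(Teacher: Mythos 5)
The paper itself offers no proof of Lemma~\ref{TTails}: it appears in the appendix of cited results and is quoted verbatim as Lemma~3.3 of \cite{Howard:2001}, so your opening observation that it ``may simply be quoted'' is exactly what happens, and nothing more is required of you than the citation. Your sketch of a direct argument does capture the correct mechanism behind the exponent $\kappa=\min(1,d/\alpha)$ --- a single jump of cost $t$ forces an empty region of diameter $\asymp t^{1/\alpha}$, hence probability $\le Ce^{-ct^{d/\alpha}}$, and the sum of order $n$ such costs then obeys the light-tail/one-big-jump dichotomy --- and your reduction of $T'$ to $T$ via Lemma~\ref{TDifferences} and of $T''$ to $T$ via $\phi_n(t)\le t^\alpha$ and the $\epsilon/3^n$-displacement is sound. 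The parts you flag as the obstacle are indeed the substance of the external proof: the greedy forward path is adaptive, so the per-jump tail must be uniform over the history, and since it only controls progress in the $e_1$-coordinate it may cross the hyperplane $\{x_1=n\}$ far from $ne_1$, so one still has to connect its terminus to $ne_1$ (or use a construction organized around the segment, as in \cite{Howard:2001}); these steps are not verified in your sketch, but since the paper's own treatment is pure citation, no gap is created relative to the paper.
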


Note that the definition of $T''$ depends on $n$, because $T''$ uses the distance function, $\phi_n$, and the point process $Q_n$.  We also need a slightly stronger version of Lemma \ref{TTails}: for any $k \leq n$, we need exponential tails for $T''(0, ke_1)$:

\begin{lem} \label{TprimeTails}
Let $\kappa = \min(1, d/\alpha)$.  For any $k \leq n$, there exist constants $C_1$ and $c_2 > 0$ such that whenever $\ell \geq C_1 k$,
\[
\mathbb{P}(T''(0, ke_1) \geq \ell) \leq C_1 \exp\left(-c_2 \ell^\kappa\right).
\]
\end{lem}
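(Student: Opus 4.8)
The plan is to reduce the statement to a tail bound for a purely power-law, $Q_n$-restricted passage time, and then to run a renormalization argument at scale $k$ in place of $n$. The first observation is that $\phi_n(t)\le t^\alpha$ for every $t\ge 0$: indeed $\phi_n$ agrees with the convex function $t\mapsto t^\alpha$ on $[0,h_n]$ and, for $t>h_n$, equals exactly the tangent line to $t\mapsto t^\alpha$ at $h_n$, which lies below the curve. Consequently, writing $T^\star(0,ke_1)$ for the analogue of $T''(0,ke_1)$ in which $\phi_n$ is replaced by $t\mapsto t^\alpha$ (but paths are still required to have all interior vertices in $Q_n$), we get $T''(0,ke_1)\le T^\star(0,ke_1)$, so it suffices to prove $\mathbb{P}(T^\star(0,ke_1)\ge\ell)\le C_1\exp(-c_2\ell^\kappa)$ for $\ell\ge C_1k$ with constants uniform over $k\le n$ and over $n$. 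In particular the $n$-dependent cutoff $h_n$ is irrelevant to the upper bound.

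For $T^\star$ I would follow the proof of Lemma~3.3 in \cite{Howard:2001} (reproduced here as Lemma~\ref{TTails}), replacing the separation $n$ by $k$ throughout. The inputs are renormalization facts that hold uniformly in $n$: partition $\mathbb{R}^d$ into unit cubes, call a cube \emph{open} if it contains at least one point of $Q_n$, and \emph{$C_4$-good} if in addition the number of points of $Q$ in it and its $3^d-1$ neighbors is at most $C_4$. Because the $\epsilon/3^n$-subcubes tile each unit cube, a unit cube contains a point of $Q_n$ exactly when it contains a point of $Q$, so $\mathbb{P}(\text{open})=1-e^{-1}$ for every $n$ and these events are independent across cubes; the events ``$C_4$-good'' are $2$-dependent with probability tending to $1$ as $C_4\to\infty$, again uniformly in $n$. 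One then constructs a path from $0$ to $ke_1$ inside a tube of bounded transverse width around $[0,ke_1]$: through runs of good cubes the path hops between consecutive $Q_n$-points along segments of bounded length, each contributing a bounded amount to $T^\star$, and to cross a maximal cluster of non-good cubes of diameter $r$ it uses a single segment of length $O(r)$, contributing $O(r^\alpha)$. The tube contains $O(k)$ cubes, so there are $O(k)$ hops and $O(k)$ non-good clusters; a $2$-dependent Chernoff estimate (as in \eqref{ExpGF}--\eqref{BBound1}) bounds the total number of non-good cubes in the tube, while the probability that some empty cluster near the path has diameter $\ge r$ is at most a polynomial in $k$ times $e^{-cr^d}$. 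Combining these, $\mathbb{P}(T^\star(0,ke_1)\ge\ell)\le C_1\exp(-c_2\ell^{\min(1,d/\alpha)})$ for all $\ell\ge C_1k$, with $C_1,c_2$ independent of $k$ and $n$. The exponent $\kappa=\min(1,d/\alpha)$ arises as usual: when $d\ge\alpha$ the binding mechanism is a linear-in-$\ell$ number of bad cubes each costing $O(1)$ (exponential tails, exponent $1$); when $d<\alpha$ a single empty region of diameter $\sim\ell^{1/\alpha}$ dominates, with probability $\sim e^{-c\ell^{d/\alpha}}$.

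The main obstacle is bookkeeping rather than a new idea: one must carry out the path construction and the large-deviation estimates with every constant uniform in $k$, so that the bound genuinely reflects the scale $k$ and not $n$, and one must keep the argument self-contained — in particular it must not invoke Lemma~\ref{BoxBound}, whose proof uses Lemma~\ref{TprimeTails} — so the renormalization above is done from scratch. Everything else (the $2$-dependent exponential-moment bound, the lattice-animal count $|\mathfrak{M}_m|\le 7^{dm}$, and the empty-region estimate) is exactly as in the arguments already given for Lemmas~\ref{BoxBound} and \ref{OffSegmentBound} and in \cite{Howard:2001}.
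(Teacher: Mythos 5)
Your reduction $T''\le T^\star$ via $\phi_n(t)\le t^\alpha$ is fine, as is the observation that a unit cube contains a $Q_n$ point iff it contains a $Q$ point (so the renormalization inputs are uniform in $n$), and your worry about circularity with Lemma~\ref{BoxBound} is legitimate. But the core probability accounting in your sketch does not deliver the stated exponent $\kappa=\min(1,d/\alpha)$. The estimate ``the probability that some empty cluster near the path has diameter $\ge r$ is at most $\mathrm{poly}(k)\,e^{-cr^d}$'' is wrong for clusters: a face-connected cluster of empty boxes of diameter $r$ need only contain on the order of $r$ boxes, so its probability is at best $e^{-cr}$ (and even getting that requires coarse-graining to $\lambda$-boxes as in Lemma~\ref{OffSegmentBound}, since for $d\ge 3$ an empty \emph{unit} cube has probability $e^{-1}$, which can exceed the site-percolation threshold, so the lattice-animal entropy $7^{dm}$ beats $e^{-m}$). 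The rate $e^{-cr^d}$ is valid only for a \emph{solid} empty region of diameter $r$. Consequently, with your construction --- jump over each bad cluster met by the tube with one segment of length $O(\mathrm{diam})$, costing $O(\mathrm{diam}^\alpha)$ --- the dominant event is a single thin cluster of diameter $\sim\ell^{1/\alpha}$, whose probability is only $e^{-c\ell^{1/\alpha}}$, i.e.\ exponent $\min(1,1/\alpha)$, strictly weaker than $\kappa$ for every $d\ge 2$. Recovering $\kappa$ requires the genuinely harder part of the Howard--Newman argument: thin empty clusters must be circumvented at \emph{linear} cost using supercritical percolation of occupied $\lambda$-boxes (so a bounded-width tube is not enough), and the $r^\alpha$-cost jumps must be charged only to solid empty regions (essentially neighborhoods of the endpoints, where no detour exists), which is where $e^{-cr^d}$ and hence $d/\alpha$ actually come from. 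Your sketch defers exactly this to ``follow the proof of Lemma 3.3,'' while the details you do supply would prove a weaker bound.

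For comparison, the paper avoids redoing any renormalization: since the definition of $T'$ does not involve $n$, Lemma~\ref{TTails} already gives the tail for $T'(0,ke_1)$ at scale $k$; one then takes the $T'$-geodesic $q_0,\dots,q_N$, replaces each $q_i$ by a $Q_n$-point within $\epsilon\sqrt d/3^n$, controls the error with the modified triangle inequality (Lemma~\ref{PhiIneq}) and $\phi_n(x)\le x^\alpha$, and handles the $N$ error terms with the large-deviation bound $\mathbb{P}(N>c\ell)\le C e^{-c_0\ell}$ from Eq.\ (3.21) of \cite{Howard:2001}. That route is short, manifestly uniform in $k\le n$, and does not use Lemma~\ref{BoxBound}, so the circularity you were guarding against never arises.
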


\begin{proof}
Because the definition of $T'$ does not depend on $\ell$ or $n$, Lemma \ref{TTails} above implies this result for $T'$.  Let $q_0, q_1, \ldots, q_N$ be the $T'$-geodesic between $0$ and $ke_1$, where $q_0 = 0$ and $q_N = ke_1$.  Recall that $Q_n$ is the subset of $Q$ in the definition of $T''$.  Note that by definition, there are points $\tilde q_i \in Q_n$ that are within distance $\epsilon/3^\ell \sqrt{d}$ of each $q_i$ in $Q_n$.  Additionally, $\phi_n(x) \leq x^\alpha$ for all $x$.  Beginning by using Lemma \ref{PhiIneq}, we have:
\begin{align*}
\mathbb{P}(T''(0, ke_1) \geq \ell) &\leq \mathbb{P}\left( 2^{2\alpha} \sum_{i = 1}^N \big[ \phi_n( \lVert \tilde q_i - q_i \rVert ) + \phi_n(\lVert q_i - q_{i - 1}\rVert) + \phi_n(\lVert q_{i - 1} - \tilde q_{i - 1}\rVert)\big] \geq \ell\right)\\
&\leq \mathbb{P} \left( \sum_{i = 1}^N \left[\lVert q_i - q_{i - 1}\rVert^\alpha + \frac{2\epsilon \sqrt{d}}{3^n}\right] \geq \ell\right).
\end{align*}
Now, we break into cases: for some constant, $c > 0$, we consider when $N \leq c \ell $ or $N >  c \ell$.  We have for $c$ sufficiently small:
\begin{align*}
\mathbb{P} \left( \sum_{i = 1}^N \left[\lVert q_i - q_{i - 1}\rVert^\alpha + \frac{2\epsilon \sqrt{d}}{3^n}\right] \geq \ell\right) &\leq \mathbb{P} \left( \sum_{i = 1}^N \lVert q_i - q_{i - 1}\rVert^\alpha \geq \ell \left(1 - \frac{2c \epsilon \sqrt{d}}{3^n}\right) \right)\\
&\ \ \   + \mathbb{P}\left( N >  c \ell \right)\\
&\leq \mathbb{P} \left(T'(0, ke_1) > \ell/2\right) + \mathbb{P}\big(N >  c\ell \big).
\end{align*}
By Lemma \ref{TTails}, the first probability has nearly-exponential tails provided that $C_1$ is large enough.  On the other hand, as summarized by Equation (3.21) in \cite{Howard:2001} and the following two equations, large deviation results like those in Section 1.9 of \cite{Durrett:1991} give:
\[
\mathbb{P}\big(N > c \ell \big) \leq C_1 e^{-c_0\ell}
\]
for a sufficiently large choice of $C_1$, a sufficiently small choice of $c_0$, and $\ell > C_1k$.  This completes the proof.
\end{proof}

One of the main advantages of the distance, $T''$, is the modified distance function, $\phi_n$, defined above.  It has a modified triangle inequality, as described in Lemma 5.3 of \cite{Howard:2001}:

\begin{lem} \label{PhiIneq}
For any $a, b, c \in \mathbb{R}^d$ we have
\[
\phi_n^2(|a - c|) \leq 2^{2\alpha}\big(\phi_n^2(|a - b|) + \phi_n^2(|b - c|)\big)
\]
and
\[
\phi_n(|a - c|) - \phi_n(|a - b|) - \phi_n(|b - c|) \leq 2^\alpha h^\alpha.
\]
\end{lem}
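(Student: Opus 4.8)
The plan is to reduce both inequalities to the triangle inequality $\lVert a - c\rVert \le \lVert a-b\rVert + \lVert b-c\rVert$ together with two elementary one-variable properties of $\phi_n$: it is nondecreasing, and it satisfies the doubling bound
\[
\phi_n(2t) \le 2^\alpha \phi_n(t) \qquad \text{for all } t \ge 0.
\]
To prove the doubling bound I would split on the size of $t$. If $t \le h_n$ then $\phi_n(t) = t^\alpha$, and since $\phi_n(s) \le s^\alpha$ for every $s\ge 0$ (the linear piece of $\phi_n$ is the tangent line to the convex function $s\mapsto s^\alpha$ at $s=h_n$, hence lies below it), we get $\phi_n(2t) \le (2t)^\alpha = 2^\alpha t^\alpha = 2^\alpha\phi_n(t)$. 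If $t > h_n$, both $\phi_n(2t)$ and $\phi_n(t)$ are given by the linear formula; expanding $2^\alpha\phi_n(t) - \phi_n(2t)$ and using $t > h_n$ and $2^\alpha - 2 > 0$ reduces the claim to the elementary inequality $2^\alpha \ge 1+\alpha$, valid for all $\alpha > 1$ by convexity of $\alpha\mapsto 2^\alpha$.

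Granting the doubling bound, the first inequality follows immediately. By symmetry of the right-hand side we may assume $\lVert a-b\rVert \ge \lVert b-c\rVert$, so $\lVert a-c\rVert \le \lVert a-b\rVert + \lVert b-c\rVert \le 2\lVert a-b\rVert$; monotonicity and doubling then give $\phi_n(\lVert a-c\rVert) \le \phi_n(2\lVert a-b\rVert) \le 2^\alpha\phi_n(\lVert a-b\rVert)$, and squaring (and discarding the nonnegative term $\phi_n^2(\lVert b-c\rVert)$ on the right) yields $\phi_n^2(\lVert a-c\rVert) \le 2^{2\alpha}\big(\phi_n^2(\lVert a-b\rVert) + \phi_n^2(\lVert b-c\rVert)\big)$.

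For the second inequality I would use instead that $\phi_n$ is globally Lipschitz with constant $L := \alpha h_n^{\alpha-1}$: indeed $\phi_n'(t) = \alpha t^{\alpha-1} \le \alpha h_n^{\alpha-1}$ on $[0,h_n]$ and $\phi_n'(t) = \alpha h_n^{\alpha-1}$ on $(h_n,\infty)$. The left-hand side is symmetric in $\lVert a-b\rVert$ and $\lVert b-c\rVert$, so assume $x := \lVert a-b\rVert \le y := \lVert b-c\rVert$. Then monotonicity followed by the Lipschitz bound on the interval $[y, x+y]$ gives
\[
\phi_n(\lVert a-c\rVert) - \phi_n(x) - \phi_n(y) \le \phi_n(x+y) - \phi_n(y) - \phi_n(x) \le Lx - \phi_n(x).
\]
It remains to note that $g(x) := \alpha h_n^{\alpha-1}x - \phi_n(x) \le (\alpha-1)h_n^\alpha$ for every $x\ge 0$: on $[0,h_n]$ one has $g(x) = \alpha h_n^{\alpha-1}x - x^\alpha$, which is nondecreasing and hence maximal at $x = h_n$ with value $(\alpha-1)h_n^\alpha$, while on $(h_n,\infty)$ a direct computation shows $g\equiv (\alpha-1)h_n^\alpha$. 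Since $\alpha - 1 < 2^\alpha$, the left-hand side is bounded by $2^\alpha h_n^\alpha$, which is the asserted bound with $h = h_n$.

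All of the above is routine; the only step needing a bit of care is the doubling bound on the linear piece $t > h_n$, where the crude estimate $\phi_n(2t) \le (2t)^\alpha$ is too weak (since $\phi_n(t) < t^\alpha$ there) and one must expand the two linear expressions explicitly and invoke $2^\alpha \ge 1+\alpha$. Since this is Lemma~5.3 of \cite{Howard:2001}, one could alternatively just cite it and skip the argument entirely.
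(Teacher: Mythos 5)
Your proof is correct. Note that the paper itself offers no proof of this statement: it appears in the appendix of cited results and is simply quoted as Lemma~5.3 of the Howard--Newman paper \cite{Howard:2001}, so there is no argument in the text to compare against; your write-up is a self-contained substitute. The two ingredients you isolate both check out: the doubling bound $\phi_n(2t)\leq 2^\alpha\phi_n(t)$ holds on $[0,h_n]$ because the linear piece of $\phi_n$ is the tangent line to the convex function $s\mapsto s^\alpha$ at $s=h_n$ and hence $\phi_n(s)\leq s^\alpha$ everywhere, and on $(h_n,\infty)$ the expansion of $2^\alpha\phi_n(t)-\phi_n(2t)$ indeed reduces, after using $t>h_n$, to $(2^\alpha-1-\alpha)h_n^\alpha\geq 0$, i.e.\ to $2^\alpha\geq 1+\alpha$ for $\alpha>1$; and the Lipschitz constant $\alpha h_n^{\alpha-1}$ together with the computation $g(x)=\alpha h_n^{\alpha-1}x-\phi_n(x)\leq(\alpha-1)h_n^\alpha$ (with equality on $[h_n,\infty)$) gives the second inequality. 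In fact your argument yields slightly stronger conclusions than stated: $\phi_n(\lVert a-c\rVert)\leq 2^\alpha\max\bigl(\phi_n(\lVert a-b\rVert),\phi_n(\lVert b-c\rVert)\bigr)$, which immediately implies the squared form, and the bound $(\alpha-1)h_n^\alpha$ in place of $2^\alpha h_n^\alpha$ in the second display. Your reading of the constant $h$ in the statement as $h_n$ is the right one; the notation is inherited from \cite{Howard:2001}, where the cutoff parameter is called $h$.
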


Additionally, we will use Lemma 5.4 from \cite{Howard:2001}, which is about the regions $\mathcal{W}_{\phi_n}(a, b)$, defined above, that describe the areas that shorten the path between $a$ and $b$.

\begin{lem} \label{HN:Regions}
For any $E > 0$ and $a, b \in \mathbb{R}^d$, let $\mathcal{H}_E(a, b)$ denote the set
\begin{align*}
\mathcal{H}_E(a, b) = \bigg\{c \in \mathbb{R}^d:\ &\exists \mbox{ a point $p$ on the line segment connecting}\\
& \frac{3}{4} a + \frac{1}{4} b \mbox{ and } \frac{1}{4} a + \frac{3}{4} b \mbox{ such that } \lVert c - p \rVert \leq E\bigg\}.
\end{align*}
Then, for any $E > 0$, there is an $h_0 > 0$ such that $\mathcal{H}_E(a, b) \subset \mathcal{W}_\phi(a, b)$ whenever $\lVert a - b \rVert > h_0$ and $h_1 > h_0$.
\end{lem}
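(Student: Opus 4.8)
The plan is to reduce the lemma to a one–dimensional inequality for $\phi = \phi_n$ and then exploit that $\phi$ is increasing and convex with $\phi(0)=0$. By the translation and rotational invariance built into the definition of $\mathcal{W}_\phi$, I may assume $a=0$ and $b = Le_1$ with $L := \|a-b\| > h_0$; write $h := h_n = \max(h_0, h_1 n^{1/2\alpha}) \ge h_0$ for the cutoff appearing in $\phi$, and recall $\phi'(t) = \alpha\min(t,h)^{\alpha-1}$, which is nondecreasing in $t$. A point $c \in \mathcal{H}_E(0,Le_1)$ lies within distance $E$ of some $p = se_1$ with $s \in [L/4, 3L/4]$, so the triangle inequality gives $\|c\| \le s+E$ and $\|c - Le_1\| \le (L-s)+E$. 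Hence, setting $x := \|c-a\|$ and $y := \|c-b\|$, one has $x+y \le L+2E$ (and $x+y\ge L$), with $x,y \in [L/4-E,\,3L/4+E]$; for $L$ larger than a constant depending only on $E$ this forces $x,y \in [L/8, L]$ and $L-x, L-y \ge L/8$.

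It therefore suffices to prove $\phi(x)+\phi(y) \le \phi(L)$ whenever $L \le x+y \le L+2E$ and $x,y \in [L/8,L]$. Assume without loss of generality $x \le y$ and put $\tau := x+y-L \in [0,2E]$; since $x \ge L/8 > 2E \ge \tau$ for $L$ large, I may write $L = (x-\tau)+y$ with $0 \le x-\tau \le y$. Using the identity $\phi(s+t)-\phi(s)-\phi(t) = \int_0^{\min(s,t)}\big[\phi'(\max(s,t)+u)-\phi'(u)\big]\,du$ for a convex $\phi$ with $\phi(0)=0$, together with $\phi(x-\tau)-\phi(x) = -\int_{x-\tau}^x \phi'(t)\,dt \ge -2E\,\phi'(x)$, I obtain
\[
\phi(L)-\phi(x)-\phi(y) \;=\; \big(\phi(x-\tau)-\phi(x)\big) + \int_0^{x-\tau}\big[\phi'(u+y)-\phi'(u)\big]\,du \;\ge\; -2E\,\phi'(x) + \int_0^{x-\tau}\big[\phi'(u+y)-\phi'(u)\big]\,du .
\]
So the lemma reduces to showing the ``shift gap'' $\int_0^{x-\tau}[\phi'(u+y)-\phi'(u)]\,du$ dominates $2E\,\phi'(x)$, where I may now use $x-\tau \ge L/16$, $y \ge L/2$, and $x \in [L/8,L]$.

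The final estimate splits by the size of $h$. If $h \le L/16$, then $\phi'$ is constant $=\alpha h^{\alpha-1}$ on $[h,\infty)$, so the shift gap equals $\int_0^h(\alpha h^{\alpha-1}-\alpha u^{\alpha-1})\,du = (\alpha-1)h^\alpha$, while $2E\,\phi'(x) = 2E\alpha h^{\alpha-1}$; the required inequality is $h \ge 2E\alpha/(\alpha-1)$, which holds once $h_0$ is chosen large enough in terms of $E$ and $\alpha$. If $h > L/16$, then $\phi'(u+y) \ge \alpha\min(y,h)^{\alpha-1} \ge \alpha(L/16)^{\alpha-1}$ for all $u\ge 0$, so restricting the integral to $[0,L/16]$ gives a shift gap at least $(\alpha-1)(L/16)^\alpha$, a constant times $L^\alpha$, whereas $2E\,\phi'(x) \le 2E\alpha L^{\alpha-1}$; this is controlled once $L$ exceeds a constant multiple of $E$. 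Choosing $h_0$ large enough to satisfy both the ``$h$ small'' constraint and the ``$L > h_0$ large'' constraint finishes the proof. The one genuinely delicate point — and the reason a plain superadditivity bound $\phi(x)+\phi(y)\le\phi(x+y)$ is not enough — is exactly this uniformity over the cutoff $h$, which ranges over all of $[h_0,\infty)$ as $n$ varies; the two regimes are handled by complementary mechanisms (taking $h_0$ large versus taking $L$ large), so one must make sure they overlap, which they do since the thresholds are comparable up to constants depending only on $E$ and $\alpha$.
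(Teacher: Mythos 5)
Your proof is correct. One caveat on the comparison: the paper does not prove this lemma at all --- it is quoted verbatim as Lemma~5.4 of Howard--Newman \cite{Howard:2001} in the appendix of cited results --- so your argument is necessarily a different (and self-contained) route. The closest proofs actually written in the paper, Lemmas~\ref{E-Regions} and~\ref{W-dimensions}, establish analogous statements about $\mathcal{W}_{\phi_n}$ by explicit case analysis on how $h_n$ compares with the relevant distances, doing separate algebraic estimates in each regime (as does the original Howard--Newman computation). You instead exploit that $\phi_n$ is $C^1$ and convex with $\phi_n'(t)=\alpha\min(t,h_n)^{\alpha-1}$, reduce membership in $\mathcal{W}_{\phi_n}$ to $\phi(x)+\phi(y)\le\phi(L)$ with $L\le x+y\le L+2E$ and $x,y$ comparable to $L$, and then use the superadditivity-gap identity
\[
\phi(s+t)-\phi(s)-\phi(t)=\int_0^{\min(s,t)}\bigl[\phi'(\max(s,t)+u)-\phi'(u)\bigr]\,du
\]
to quantify how much is gained over the loss $2E\,\phi'(x)$ from the slack $\tau\le 2E$. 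This buys a cleaner, essentially two-case argument ($h_n\lesssim L$ versus $h_n\gtrsim L$) that makes the key point --- uniformity over the cutoff $h_n\in[h_0,\infty)$, i.e.\ over $n$ --- completely explicit: in one regime the gap $(\alpha-1)h_n^\alpha$ beats $2E\alpha h_n^{\alpha-1}$ once $h_0\gtrsim_{\alpha}E$, and in the other the gap of order $L^\alpha$ beats $2E\alpha L^{\alpha-1}$ once $L>h_0\gtrsim_\alpha E$; the two thresholds are compatible, as you note. All the intermediate reductions check out (the triangle-inequality bounds forcing $x,y\in[L/8,L]$ and $y\ge L/2$, the requirement $\tau\le x$ for $L\ge 16E$, and the exact evaluation $\int_0^{h}(\alpha h^{\alpha-1}-\alpha u^{\alpha-1})\,du=(\alpha-1)h^\alpha$ when $x-\tau\ge h$ and $y\ge h$). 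Two cosmetic remarks: it is worth stating explicitly that $\phi_n$ is $C^1$ (the derivatives match at $h_n$), which justifies the integral identity without fuss; and the hypothesis $h_1>h_0$ plays no role in your argument beyond guaranteeing $h_n\ge h_0$, which already holds by the definition $h_n=\max(h_0,h_1n^{1/2\alpha})$ --- that is consistent with the lemma as stated.
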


\subsection{Logarithmic Sobolev inequalities}
We need two logarithmic Sobolev inequalities: Theorem 5.1 in \cite{Boucheron:2013} and the theorem from section 8.14 of \cite{Lieb:2001}.  They are reproduced below.
\begin{thm} \label{logSobolevBernoulli}
Let $f: \{-1, 1\}^n \to \mathbb{R}$ be an arbitrary real-valued function defined on the $n$-dimensional binary hypercube, and let $L$ be the uniform measure on $\{-1, 1\}^n$.  Then,
\[
\Ent_L(f^2) \leq \mathbb{E}_L \sum_{i = 1}^n (\Delta_i f)^2,
\]
where $\Delta_i f(X) = f(X_i^+) - f(X_i^-)$, $X_i^+$ is $X$ with a $1$ replacing the $i$th entry of $X$, and $X_i^-$ is $X$ with a $-1$ replacing the $i$th entry of $X$.
\end{thm}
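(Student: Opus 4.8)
The plan is to prove this in two stages: a tensorization step that reduces the claim to the two-point space, followed by an elementary one-variable calculus lemma carried out on the two-point space.

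First I would invoke sub-additivity of entropy for product measures. Writing $L = L_1 \otimes \cdots \otimes L_n$ with each $L_i$ uniform on $\{-1,1\}$, for every non-negative $g$ on $\{-1,1\}^n$ one has
\[
\Ent_L(g) \le \sum_{i=1}^n \mathbb{E}_L\!\left[ \Ent_{L_i}(g) \right],
\]
where $\Ent_{L_i}(g)$ is the entropy of $g$ in the $i$-th coordinate with the others frozen. I would give the short proof of this from the Gibbs variational formula $\Ent_\mu(g) = \sup\{\mathbb{E}_\mu[gh] : \mathbb{E}_\mu e^{h} \le 1\}$ together with a telescoping over coordinates (this is the route taken in \cite{Boucheron:2013}). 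Applying it with $g = f^2$ reduces the theorem to the one-dimensional statement $\Ent_{L_i}(f^2) \le \mathbb{E}_{L_i}[(\Delta_i f)^2]$ for each $i$ and each fixed configuration of the remaining coordinates; note that $\Delta_i f$ does not depend on $X_i$, so this inner expectation is just the squared difference of the two values of $f$ along the $i$-th edge.

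For the two-point inequality, set $a = f(1)$, $b = f(-1)$. Since replacing $f$ by $|f|$ leaves $f^2$ unchanged while not increasing $(a-b)^2$, I may assume $a \ge b \ge 0$; and since both sides are homogeneous of degree two in $(a,b)$, I may normalize $a^2 + b^2 = 2$ and write $a = \sqrt{1+t}$, $b = \sqrt{1-t}$ with $t \in [0,1]$. The inequality to prove becomes $\psi(t) \ge 0$, where
\[
\psi(t) = 2 - 2\sqrt{1-t^2} - \frac{1}{2}\big[(1+t)\log(1+t) + (1-t)\log(1-t)\big].
\]
I would then check $\psi(0) = \psi'(0) = 0$ and compute
\[
\psi''(t) = \frac{1}{1-t^2}\left( \frac{2}{\sqrt{1-t^2}} - 1 \right) > 0 \quad \text{for } 0 \le t < 1,
\]
so $\psi$ is convex on $[0,1)$ with a double zero at the origin, hence non-negative there, and the endpoint $t = 1$ follows by continuity. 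The degenerate cases $a = b$ (both sides zero) and $b = 0$ (the claim reduces to $\frac{1}{2}\log 2 \le 1$) are consistent with this and can be verified directly.

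The main obstacle is organizational rather than deep: the only genuinely new content is the two-point lemma, and the delicate point is ensuring that the reductions to $|f|$ and to $a^2 + b^2 = 2$ are legitimate across the entire admissible range of $(a,b)$ so that the single convexity argument covers all cases. An alternative I would keep in reserve is to prove the two-point hypercontractivity estimate of Bonami--Beckner by direct expansion, tensorize it, and then differentiate in the relevant parameter to extract the logarithmic Sobolev form; but the entropy/tensorization route above is the most direct and is the one I would carry out.
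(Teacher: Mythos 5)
Your proof is correct: the tensorization (subadditivity) step is the standard one, the reduction of the two-point inequality to $\psi(t)=2-2\sqrt{1-t^2}-\tfrac12[(1+t)\log(1+t)+(1-t)\log(1-t)]\ge 0$ is legitimate (the passage to $|f|$, the symmetry in $(a,b)$, and degree-two homogeneity all hold), and the computation $\psi(0)=\psi'(0)=0$ with $\psi''(t)=\frac{1}{1-t^2}\bigl(\frac{2}{\sqrt{1-t^2}}-1\bigr)>0$ on $[0,1)$ does give the claim, with the endpoint and degenerate cases handled as you say. Note that the paper itself offers no proof of this statement --- it is listed in the appendix of cited results as Theorem 5.1 of \cite{Boucheron:2013} --- and your argument is essentially the standard proof from that reference (tensorization of entropy plus an elementary two-point lemma), so there is nothing to reconcile; if anything, your two-point bound proves the stated constant $1$, which is weaker than the sharp constant $\tfrac12$ available there, and that is all the paper needs.
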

\begin{thm} \label{logSobolevUniform}
Let $f$ be any function where $f \in L^2(\mathbb{R}^d)$ and $\lVert \nabla f \rVert \in L^2(\mathbb{R}^d)$, and let $a > 0$ be any number.  Let $\lVert f \rVert_2$ be the $L^2$ norm of $f$.  Then,
\[
\int_{\mathbb{R}^d} f(x)^2 \ln\left(\frac{|f(x)|^2}{\lVert f \rVert_2^2}\right) dx + n(1 + \ln a) \lVert f \rVert_2^2 \leq \frac{a^2}{\pi} \int_{\mathbb{R}^d} \lVert \nabla f(x) \rVert^2 dx.
\]
\end{thm}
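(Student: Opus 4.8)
The plan is to derive this Euclidean logarithmic Sobolev inequality from the Gaussian logarithmic Sobolev inequality of Gross, with the free parameter $a$ arising entirely from the choice of reference Gaussian. Write $\rho_\sigma(x) = (2\pi\sigma^2)^{-d/2}\exp(-|x|^2/2\sigma^2)$ for the density of the centered Gaussian measure $\gamma_\sigma$ on $\mathbb{R}^d$ with covariance $\sigma^2 I$, and recall Gross's inequality: for all suitably regular $g$, with $\bar g^2 := \int g^2\, d\gamma_\sigma$,
\[
\int_{\mathbb{R}^d} g^2 \ln\!\left(\frac{g^2}{\bar g^2}\right) d\gamma_\sigma \le 2\sigma^2 \int_{\mathbb{R}^d} |\nabla g|^2\, d\gamma_\sigma.
\]
(This Gaussian inequality may itself be obtained from the Bernoulli inequality of Theorem~\ref{logSobolevBernoulli} by tensorization and the central limit theorem, so the route is close to self-contained.) First I would prove the claimed statement for $f \in C_c^\infty(\mathbb{R}^d)$ with $f\not\equiv 0$, and then pass to the general class $\{f : f, \nabla f \in L^2\}$ by a truncation-and-mollification argument.

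For $f \in C_c^\infty$, substitute $g := f/\sqrt{\rho_\sigma}$, so that $g^2\, d\gamma_\sigma = f^2\, dx$ and $\bar g^2 = \|f\|_2^2$. A direct computation gives $\nabla g = \rho_\sigma^{-1/2}\bigl(\nabla f + \tfrac{x}{2\sigma^2} f\bigr)$, hence
\[
\int_{\mathbb{R}^d} |\nabla g|^2\, d\gamma_\sigma = \int_{\mathbb{R}^d} |\nabla f|^2\, dx + \frac{1}{\sigma^2}\int_{\mathbb{R}^d} f\,(x\cdot\nabla f)\, dx + \frac{1}{4\sigma^4}\int_{\mathbb{R}^d} |x|^2 f^2\, dx,
\]
and integrating the cross term by parts, $\int f\,(x\cdot\nabla f)\, dx = \tfrac12\int x\cdot\nabla(f^2)\, dx = -\tfrac{d}{2}\|f\|_2^2$. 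On the entropy side, since $\ln\rho_\sigma(x) = -\tfrac{d}{2}\ln(2\pi\sigma^2) - \tfrac{|x|^2}{2\sigma^2}$,
\[
\int g^2 \ln\!\left(\frac{g^2}{\|f\|_2^2}\right) d\gamma_\sigma = \int f^2 \ln\!\left(\frac{f^2}{\|f\|_2^2}\right) dx + \frac{d}{2}\ln(2\pi\sigma^2)\,\|f\|_2^2 + \frac{1}{2\sigma^2}\int |x|^2 f^2\, dx.
\]
Inserting both sides into Gross's inequality and expanding the right-hand side by $2\sigma^2$, the two $\int |x|^2 f^2$ terms cancel exactly, leaving
\[
\int f^2 \ln\!\left(\frac{f^2}{\|f\|_2^2}\right) dx + \left(\frac{d}{2}\ln(2\pi\sigma^2) + d\right)\|f\|_2^2 \le 2\sigma^2 \int |\nabla f|^2\, dx.
\]

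To match the claimed constants I would then choose the Gaussian variance $\sigma^2 = a^2/(2\pi)$: this gives $2\sigma^2 = a^2/\pi$ and $\tfrac{d}{2}\ln(2\pi\sigma^2) + d = d\ln a + d = d(1+\ln a)$, which is exactly the asserted inequality (with $d$ playing the role of the $n$ in the displayed statement). The manipulations above — the substitution, the gradient computation, and the integration by parts — are routine, and the sign of the free parameter is irrelevant since only $\ln a$ and $a^2$ appear. The one genuine obstacle is the density step: to upgrade from $C_c^\infty$ to general $f$ with $f,\nabla f\in L^2$, one truncates $f$ at height $M$ away from a small neighborhood of its zero set, mollifies, and verifies that $\int f^2\ln(f^2/\|f\|_2^2)\,dx$, $\int|\nabla f|^2\,dx$, and $\|f\|_2^2$ all pass to the limit; the only delicate point is controlling the entropy integrand, handled via dominated convergence together with the elementary bound $|t\ln t|\le t^2 + e^{-1}$ for $t\ge 0$ applied to $t = f^2/\|f\|_2^2$. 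An alternative derivation through Stam's inequality for Fisher information yields the same conclusion but does not simplify this approximation argument.
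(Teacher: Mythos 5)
Your core derivation is correct, and it is worth noting that the paper never proves this statement at all: Theorem \ref{logSobolevUniform} sits in the appendix of cited results and is imported verbatim from Section 8.14 of \cite{Lieb:2001}, whose proof does not pass through Gross's Gaussian inequality. Your route --- start from the Gaussian log-Sobolev inequality for $\gamma_\sigma$, substitute $g = f/\sqrt{\rho_\sigma}$, integrate the cross term by parts to produce the $-\tfrac{d}{2}\|f\|_2^2$, watch the $\int |x|^2 f^2$ terms cancel, and then tune $\sigma^2 = a^2/(2\pi)$ --- is the classical equivalence between Gross's inequality and the Euclidean form, and your constants check out exactly: $2\sigma^2 = a^2/\pi$ and $\tfrac{d}{2}\ln(2\pi\sigma^2) + d = d(1+\ln a)$, which is the stated inequality with $n=d$. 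This buys something the bare citation does not: combined with your remark that the Gaussian inequality follows from the Bernoulli inequality of Theorem \ref{logSobolevBernoulli} by tensorization and the central limit theorem, it makes the two log-Sobolev inputs of Section \ref{Entropy} essentially a single self-contained source, at the cost of the approximation argument the textbook proof packages away.

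That approximation step is the one place where your sketch, as written, does not go through. The proposed dominating bound $|t\ln t| \le t^2 + e^{-1}$ with $t = f^2/\|f\|_2^2$ yields the majorant $f^4/\|f\|_2^2 + e^{-1}\|f\|_2^2$, whose constant term is not integrable over $\mathbb{R}^d$, and $f^4$ itself need not be integrable for $f \in H^1(\mathbb{R}^d)$ once $d \ge 5$; so dominated convergence cannot be run with this majorant on the whole space. The standard repairs: control the positive part of $f^2\ln(f^2/\|f\|_2^2)$ by an $L^{2+\varepsilon}$ norm via Gagliardo--Nirenberg (which is continuous along the truncation/mollification), and observe that if the negative part has infinite integral then the left-hand side is $-\infty$ and the inequality is vacuous, so no domination of the negative part is needed. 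Alternatively, for the use made of the theorem in Lemma \ref{EntropyDerivativeBound} the function is supported in $[0,1]^d$, where your bound does suffice, so you could state and prove the inequality only for compactly supported $f$ and avoid the issue entirely.
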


\subsection{Greedy lattice animals}
We will need Theorem 1 from \cite{Gandolfi:1994} in the proofs of Lemma \ref{UniformBound} (before Equations \eqref{pBoundPart1} and \eqref{OtherGreedyLatticeBound}) and Lemma \ref{BernoulliBound} (before Equation \eqref{MPartOne}).  Define a lattice animal to be a face-connected set of unit boxes, where the corners of the boxes are at points in $\mathbb{Z}^d$.  Let $X_\nu$ be any i.i.d. family of non-negative random variables indexed by boxes $\nu$, and let $\mathcal{A}_z(n)$ be the collection of all lattice animals of size $n$ that contain the box centered at $z$. Also, let
\[
M_n = \max_{A \in \mathcal{A}_z(n)} \sum_{\nu \in A} X_\nu.
\]
\begin{thm} \label{GreedyLatticeThm}
If there exists an $a > 0$ such that $\mathbb{E}(X_0^d (\log^+ X_0)^{d + a}) < \infty$, then there exists a constant $M$ such that $M_n / n \to M$ with probability $1$ and in $L^1$.
\end{thm}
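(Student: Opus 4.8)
The plan is to follow Gandolfi and Kesten \cite{Gandolfi:1994} (building on Cox, Gandolfi, Griffin and Kesten \cite{Cox:1993}) and to separate the argument into two parts: (i) a \emph{linear growth} bound, namely that there is a constant $C$ with $\mathbb{E} M_n \leq Cn$ for all $n$ and $\limsup_n M_n/n < \infty$ almost surely, which holds precisely under the stated moment hypothesis, and (ii) the existence of the limit $M_n/n \to M$. The only structural fact about the lattice needed throughout is that the number of lattice animals of size $n$ containing a fixed box is at most $\rho^n$ for a dimensional constant $\rho$ (as in Equation~(4.24) of \cite{Grimmett:1999}); this is what makes union bounds over animals usable. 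Note also that $M_n$ is nondecreasing in $n$, since any size-$n$ animal through the fixed box can be grown by one box.

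For the linear growth bound I would truncate. Fix a large threshold $b$, write $X_\nu = Y_\nu + Z_\nu$ with $Y_\nu = X_\nu \wedge b$ and $Z_\nu = (X_\nu - b)^+$, and correspondingly $M_n \leq M_n^Y + M_n^Z$. The bounded part satisfies $M_n^Y \leq bn$ deterministically, and a union bound over the $\rho^n$ animals of size $n$ together with a Chernoff/Bernstein estimate for $\sum_{\nu \in A} Y_\nu$ shows $M_n^Y/n$ is tightly concentrated around $\mathbb{E} M_n^Y/n$; since the $Y_\nu$ are bounded, this already yields an almost sure $\limsup$ for the bounded part. The heavy part $M_n^Z$ is where the precise moment condition enters: one must show $\sum_n \mathbb{P}(M_n^Z > \delta n) < \infty$ for every $\delta > 0$, so that Borel--Cantelli gives $M_n^Z = o(n)$ almost surely. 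Bounding $M_n^Z$ by the total $Z$-weight of an animal and optimizing the union bound over the $\rho^n$ animals against the tail of $Z_0$ is exactly what forces the requirement $\mathbb{E}[X_0^d(\log^+ X_0)^{d+a}] < \infty$; choosing $b$ large first makes $\mathbb{E} Z_0$ and the relevant tail contributions as small as needed.

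Given the linear growth bound, the existence of the limit is comparatively soft. One route is to set up a subadditive array adapted to a coordinate direction (maxima over animals constrained to lie in half-spaces) and invoke Kingman's subadditive ergodic theorem; another is to establish an approximate subadditivity $\mathbb{E} M_{n+m} \leq \mathbb{E} M_n + \mathbb{E} M_m + o(n+m)$ by decomposing an optimal size-$(n+m)$ animal along a spanning tree rooted at the fixed box into a size-$n$ piece and finitely many pieces of total size $m$, together with translation invariance, and then conclude $\mathbb{E} M_n/n \to M$ by Fekete's lemma; concentration of $M_n$ about its mean (Efron--Stein / bounded differences on the truncated weights, plus the $o(n)$ control of the heavy part from the previous step) upgrades this to almost sure and $L^1$ convergence. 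Either way, that $M$ is non-random, depending only on $d$ and the law of $X_0$, follows from a $0$--$1$ law / ergodicity.

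The main obstacle is the heavy-tail step inside the linear growth bound. A crude union bound over the $\rho^n$ animals, combined with only a finite $d$-th moment, gives at best $\mathbb{E} M_n = O(n\log n)$; removing the logarithm requires the more delicate multi-scale truncation of Gandolfi--Kesten, which tracks how many animals of size $n$ can simultaneously collect many boxes of moderately large weight, and it is precisely this refinement that makes $\mathbb{E}[X_0^d(\log^+ X_0)^{d+a}] < \infty$ the sharp hypothesis. Everything else --- the animal-counting bound, the subadditivity, and the concentration estimates --- is routine by comparison.
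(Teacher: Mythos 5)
First, a point of reference: the paper does not prove Theorem \ref{GreedyLatticeThm} at all. It is quoted in the appendix of cited results as Theorem 1 of \cite{Gandolfi:1994} and used as a black box, so your proposal can only be measured against the literature proof (that of \cite{Cox:1993} for the linear upper bound and \cite{Gandolfi:1994} for the convergence), which is also the proof you are modeling your outline on.

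Judged as a proof, the outline has two genuine gaps. The first you concede yourself: the linear-growth step under the hypothesis $\mathbb{E}[X_0^d(\log^+ X_0)^{d+a}]<\infty$ is exactly the multi-scale block argument of \cite{Cox:1993}, and your sketch defers it. The simple split $X_\nu=(X_\nu\wedge b)+(X_\nu-b)^+$ with a union bound over the $\rho^n$ animals cannot give $\sum_n \mathbb{P}(M_n^Z>\delta n)<\infty$: for a fixed animal, Markov's inequality only gives a bound of order $\mathbb{E}Z_0/\delta$, not decaying in $n$, and exponential Markov is unavailable since $Z_0$ has no exponential moments, so the union over exponentially many animals is fatal without the coarse-graining that limits how many high-weight boxes a size-$n$ animal can visit at each scale. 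Since that refinement is the heart of the theorem, what you have is a plan, not a proof. The second gap is the assertion that, given linear growth, the existence of the limit is ``comparatively soft.'' The approximate subadditivity $\mathbb{E}M_{n+m}\leq \mathbb{E}M_n+\mathbb{E}M_m+o(n+m)$ does not follow from decomposing an optimal size-$(n+m)$ animal: the leftover pieces of total size $m$ sit at random locations, so their weight is controlled only by a maximum of $M_m$-type quantities over polynomially many translates, and that maximum is not $\mathbb{E}M_m+o(m)$ without precisely the concentration you have not yet established; similarly, there is no off-the-shelf subadditive (or superadditive) array for greedy animals to which Kingman's theorem applies directly, because the maximizing animal is unconstrained in shape and location. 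Handling this correctly, given the linear bound from \cite{Cox:1993}, is the actual content of \cite{Gandolfi:1994}, not a routine corollary of it.
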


\bigskip
\noindent
{\bf Acknowledgements.} The authors thank Xuan Wang for sharing notes on a previous project with M.\,D. on this problem.  The research of M.\,D. is supported by an NSF CAREER grant.  The research of T.\,G. was supported in part by NSF grant DMS-1344199.

\bibliographystyle{alpha}

\bibliography{CombinedBibliography.bib}

\end{document}